\def\NAT@def@citea{\def\@citea{\NAT@separator}}
\numberwithin{equation}{section}
\DeclareMathOperator{\tr}{\text{\rm tr}}
\DeclareMathOperator{\sees}{\text{\rm ess sup}}
\newcommand{\h}{\mathcal{H}}
\newcommand{\Kt}{K_\Theta}
\def\f{{\mathbf{f}}}
\def\g{{\bf g}}
\newtheorem{claim}{claim}[section]
\newtheorem{theorem}[claim]{Theorem}
\newtheorem{lemma}[claim]{Lemma}
\newtheorem{proposition}[claim]{Proposition}
\theoremstyle{definition}
\newtheorem{remark}[claim]{Remark}
\newtheorem{example}[claim]{Example}
\title{Conjugations in $L^2(\h)$}
\author[M. C. C\^amara]{M. Cristina C\^amara}
\address{M. Cristina C\^amara, Center for Mathematical Analysis, Geometry and Dynamical Systems, Mathematics Department, Instituto Superior T\'ecnico, Universidade de Lisboa, Av. Rovisco Pais, 1049--001 Lisboa, Portugal}
\email{cristina.camara@tecnico.ulisboa.pt}
\author[K. Kli\'s--Garlicka]{Kamila Kli\'s--Garlicka}
\address{Kamila Kli\'s-Garlicka, Department of Applied Mathematics, University of Agriculture, ul. Balicka 253c, 30-198 Krak\'ow, Poland}
\email{rmklis@cyfronet.pl}
\author[B. \L anucha]{Bartosz \L anucha}
\address{Bartosz \L anucha, Department of Mathematics,  Maria Curie-Sk\l odowska University, Maria Curie-Sk\l o\-dow\-ska Square 1, 20-031 Lublin, Poland}
\email{bartosz.lanucha@poczta.umcs.lublin.pl}
\author[M. Ptak]{Marek Ptak}
\address{Marek Ptak, Department of Applied Mathematics, University of Agriculture, ul. Balicka 253c, 30-198 Krak\'ow, Poland}
\email{rmptak@cyf-kr.edu.pl}
\thanks{The work of the first author was partially supported by FCT/Portugal through
UID/MAT/04459/2013. The research of the second and the fourth authors was financed by the Ministry of Science and Higher Education of the Republic of Poland}
\keywords{conjugation, $C$--symmetric operator, Hardy space, model space, invariant subspaces for unilateral shift, model for a contraction, truncated Toeplitz operator.}
\subjclass[2010]{Primary 47B35, Secondary 47B32, 30D20}
\begin{document}
\begin{abstract}{Conjugations commuting with $\mathbf{M}_z$ and intertwining $\mathbf{M}_z$ and $\mathbf{M}_{\bar z}$ in $L^2(\h)$, where  $\h$ is a Hilbert space, are characterized. We also investigate which of them leave invariant the whole Hardy space $H^2(\h)$ or a model space $\Kt=H^2(\h)\ominus\Theta H^2(\h)$, where $\Theta$ is a pure operator valued inner function.}
\end{abstract}
\maketitle

\section{Introduction}
The motivation to study conjugations (i.e., antilinear isometric involutions) has its roots in physics (\cite{GPP}), in particular in non-hermitian quantum mechanics and spectral analysis of complex symmetric operators.
There are many  important examples of complex symmetric operators, that is $C$--symmetric operators with respect to some conjugation $C$, namely normal operators, Hankel operators, truncated Toeplitz operators (see for example \cite{GP, GPP, CGW, GMR, GP2, CKP, CFT, KoLee18, Sarason}).

In \cite{CKP, CKLP} all conjugations in the classical $L^2$ space on the unit circle commuting with $M_z$ or intertwining the operators $M_z$ and $M_{\bar z}$ (in other words, all conjugations $C$ according to which the operator $M_z$ is $C$--symmetric, see the definition below) were fully characterized. The behaviour of such conjugations was also studied in connection with an analytic part of the space $L^2$  and model spaces, in particular there were characterized all conjugations leaving the whole Hardy space and  model spaces invariant. In what follows we study similar questions concerning conjugations in  $L^2$ spaces with values in a certain Hilbert space $\h$. The investigation in this direction is important for its relation with  B. Sz.-Nagy--C. Foia\c{s}  theory \cite[Chap.6]{NF} saying that $C_0$ contractions with finite defect indexes are unitarily equivalent to multiplication by the independent variable in a certain model space given by an operator valued inner function. In other words the results from the paper can be moved by unitary equivalence to contractions on Hilbert spaces, keeping suitable assumptions.

Denote by $\h$ a complex Hilbert space, by $L(\h)$ the algebra of all linear bounded operators on $\h$ and by $LA(\h)$ the space of all bounded antilinear operators on $\h$.
A {\it conjugation} $C$ in $\h$  is an antilinear isometric involution, i.e., $C^2=I_{\h}$ and
\begin{equation}
\langle Cf,Cg\rangle = \langle g,f \rangle \quad \text{ for all } f,g\in\h.
\end{equation}
An operator $A\in L(\h)$ is called $C$--{\emph symmetric} if $CAC=A^*$.
Recall that for $A\in LA(\mathcal{H})$ there exists a unique antilinear operator $A^\sharp$, called the {\it antilinear adjoint} of $A$, defined by the equality
\begin{equation}\label{as}\langle Af,g\rangle=\overline{\langle f,A^\sharp g\rangle},\end{equation}
for all $f, g\in \mathcal{H}$. It is clear, see \cite{CKP}, that $C^\sharp=C$ for any conjugation $C$, $(AB)^\sharp=B^*A^\sharp$ and similarly $(BA)^\sharp=A^\sharp B^*$ for $A\in LA(\h)$, $B\in L(\h)$.

 Let $L^2=L^2(\mathbb{T}, m)$ and $L^\infty=L^\infty(\mathbb{T},m)$ where $\mathbb{T}$ is the unit circle and $m$ the normalized Lebesgue measure and let $H^2$ denote the classical Hardy space on the unit disc $\mathbb{D}$. For an inner function $\theta$  (i.e., $\theta\in L^\infty\cap H^2$ and $|\theta|=1$ a.e. on $\mathbb{T}$) one can define the {\it model space} $K_\theta=H^2\ominus \theta H^2$.

The most natural conjugation $\tilde J$ in $L^2$ is defined as $\tilde Jf=\bar f$, for $f\in L^2$. This conjugation has two natural properties:  the operator $M_z$ is $\tilde J$--symmetric, i.e., $M_z \tilde J=\tilde J M_{\bar z}$, and
$\tilde J$ maps an analytic function into a co-analytic one, i.e., $\tilde J H^2=\overline{H^2}$.
Another natural conjugation in $L^2$ is $J^{\star}f=f^{\#}, \ f^{\#}(z)=\overline{ f(\bar z)}.$ The conjugation $J^{\star}$ has a completely different behaviour: it commutes with multiplication  by $z$ ($M_z {J}^\star={J}^\star M_{ z}$) and leaves analytic functions invariant, {${J}^\star H^2={H^2}$.}

In Section 3 we recall some basic properties of vector and operator valued functions. In Section 4, for any {separable} Hilbert space $\h$, we naturally extend the definitions of the conjugations $\tilde{J}$ and $J^\star$ on $L^2$ to the conjugations $\widetilde{\mathbf{J}}$ and $\mathbf{J}^\star$ on a vector valued space $L^2(\h)$ keeping the same properties with respect to the multiplication by the independent variable, i.e.,
$\widetilde{\mathbf{J}}\mathbf{M}_z=\mathbf{M}_{\bar z}\widetilde{\mathbf{J}}, \quad \mathbf{J}^\star \mathbf{M}_z=\mathbf{M}_z \mathbf{J}^\star.$  However, one needs to fix some conjugation $J$ on $\h$ (in case of $L^2$ the natural conjugation in $\mathbb{C}$, $z\mapsto \bar z$, plays this role).
 Theorems \ref{8.3} and \ref{8.1}  characterize all {\it $\mathbf{M}_z$--conjugations} $\mathbf{C}$ in $L^2(\h)$, i.e.,
\begin{equation}\label{mz}
{\mathbf{C}}\mathbf{M}_z=\mathbf{M}_{\bar z}{\mathbf{C}}
\end{equation}
and all {\it $\mathbf{M}_z$--commuting conjugations} in $L^2(\h)$, i.e.,
\begin{equation}\label{mc}
{\mathbf{C}}\mathbf{M}_z=\mathbf{M}_{z}{\mathbf{C}}.
\end{equation}
In Section 5, Theorem \ref{8.2} and Proposition \ref{5.5}, we describe all conjugations satisfying \eqref{mz} or \eqref{mc} and leaving the Hardy space $H^2(\h)$ invariant.
In Section 6 we study these conjugations for which vector valued model spaces are invariant. We concentrate on the case when the dimension of the underlying Hilbert space $\h$ is finite and the vector valued inner function is pure. The last section is devoted to those conjugations which leave shift invariant subspaces invariant. Generally, the vector valued case is much more complicated than the scalar one, see for example Theorems \ref{66}, \ref{th66}, \ref{75} and Example \ref{ex88}.
 Section 2 is devoted to the special case $\h=\mathbb{C}^2$. This  illustrates the general theorems and gives stronger result than in \cite{KoLee18}. On the other hand, it gives more precise characterizations (Theorem \ref{t3}), which are of independent interest.

\section{Conjugations in $L^2\oplus L^2$}
In Theorem 2.4 and Proposition 2.6 \cite{KoLee18} there are given the conditions for a $2\times 2$ operator matrix to be a conjugation. We will, however, use
equivalent conditions obtained by checking the antilinear selfadjointness and involutive property of a conjugation. Namely,
let $
\widetilde C=\left[   \begin{BMAT}{cc}{cc}
     D_1 & D_2 \\
     D_3 & D_4\\
   \end{BMAT} \right]$,
 where $D_j$ are nonzero antilinear operators on $\mathcal H$ for $j=1,2,3,4$. Then $\widetilde C$
is a conjugation on $\mathcal H \oplus \mathcal H$ if and only if the following conditions hold:
\begin{equation}\label{Ka}
D_1=D_1^\sharp,\,D_4=D_4^\sharp,\,D_3=D_2^\sharp,
\end{equation}
\begin{equation}\label{Kb}
D_1D_1^\sharp+D_2D_2^\sharp=I,\, D_2^\sharp D_2+D_4D_4^\sharp=I,
\end{equation}
\begin{equation}\label{Kc}
D_2^\sharp D_1+D_4D_2^\sharp=0. 
\end{equation}

Denote $\mathbf{M}_z=\left[   \begin{BMAT}{cc}{cc}
     M_z & 0 \\
     0 & M_z\\
   \end{BMAT} \right]$.
We will investigate the conditions for $\mathbf{C}$ to be an $\mathbf{M}_z$--conjugation and for it to commute  with $\mathbf{M}_z$.

In \cite[Theorem 2.4]{CKLP} all conjugations in $L^2$ commuting with $M_z$ were characterized. In particular it was shown that such a conjugation has to be of the form $M_\psi J^\star$ for some unimodular function $\psi\in L^\infty$ which is {\it symmetric}, i.e., $\psi(z)=\psi(\bar z)$ {a.e. on $\mathbb{T}$}. The following theorem gives a characterization of $\mathbf{M}_z$--commuting conjugations in $L^2\oplus L^2$.

\begin{theorem}\label{t2}
Let $\mathbf{C}$ be an antilinear operator on $L^2\oplus L^2$. Then
$
\mathbf{C}=\left[   \begin{BMAT}{cc}{cc}
     D_1 & D_2 \\
     D_2^\sharp & D_4\\
   \end{BMAT} \right]$ is a conjugation such that $\mathbf{M}_z \mathbf{C}=\mathbf{C}\mathbf{M}_z$ if and only if
   there are functions $\psi_i\in L^\infty$, $i=1,2,4$, such that $D_i=M_{\psi_i}J^\star$ and
   \begin{align}
   &\psi_1^{\#}=\overline\psi_1,\quad \psi_4^{\#}=\overline\psi_4,\label{14}\\
  & |\psi_1|^2=|\psi_4|^2=1-|\psi_2|^2, \label{15}
  \\ &\psi_1^{\#}\psi_2+\psi_2^{\#}\psi_4=0.\label{16}
   \end{align}
\end{theorem}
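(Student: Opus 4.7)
The plan is to convert each of the conjugation conditions (\ref{Ka})--(\ref{Kc}) together with the commutation $\mathbf{M}_z\mathbf{C}=\mathbf{C}\mathbf{M}_z$ into a scalar condition on the symbols $\psi_i$. The key preliminary observation is that any bounded antilinear operator on $L^2$ which commutes with $M_z$ must be of the form $M_\psi J^\star$ for some $\psi\in L^\infty$: indeed, if $T$ is such an operator then $T J^\star$ is linear (since $J^\star$ is antilinear) and commutes with $M_z$ (since $J^\star M_z=M_z J^\star$), hence $TJ^\star=M_\psi$ for some $\psi\in L^\infty$, and $(J^\star)^2=I$ then gives $T=M_\psi J^\star$. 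Applying this to each of the entries $D_1$, $D_2$, $D_4$ produces $\psi_1,\psi_2,\psi_4\in L^\infty$.

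The calculational backbone of the proof is the identity $J^\star M_f=M_{f^\#}J^\star$, which I would verify by a direct pointwise check on $\phi\in L^2$. Combined with the antilinear adjoint rules $(BA)^\sharp=A^\sharp B^*$ and $(AB)^\sharp=B^*A^\sharp$, this yields
\[
(M_\psi J^\star)^\sharp = J^\star M_{\bar\psi} = M_{\overline{\psi^\#}}J^\star.
\]
Condition (\ref{Ka}) applied to $D_1=D_1^\sharp$ then forces $\psi_1=\overline{\psi_1^\#}$, i.e.\ $\psi_1^\#=\overline{\psi_1}$, and similarly for $\psi_4$; this is (\ref{14}). The identity $J^\star M_f J^\star=M_{f^\#}$ further gives $D_1D_1^\sharp=M_{|\psi_1|^2}$, $D_2D_2^\sharp=M_{|\psi_2|^2}$, $D_2^\sharp D_2=M_{(|\psi_2|^2)^\#}$ and $D_4D_4^\sharp=M_{|\psi_4|^2}$, so (\ref{Kb}) becomes
\[
|\psi_1|^2+|\psi_2|^2=1,\qquad (|\psi_2|^2)^\#+|\psi_4|^2=1.
\]
Because (\ref{14}) yields $|\psi_4|^2=(|\psi_4|^2)^\#$, applying $\#$ to the second equation recovers $|\psi_2|^2+|\psi_4|^2=1$, from which (\ref{15}) follows.

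For (\ref{Kc}) I compute $D_2^\sharp D_1=M_{(\psi_1\bar\psi_2)^\#}=M_{\psi_1^\#\,\overline{\psi_2^\#}}$ and $D_4 D_2^\sharp=M_{\psi_4\bar\psi_2}$. The vanishing of the sum gives $\psi_1^\#\,\overline{\psi_2^\#}+\psi_4\bar\psi_2=0$; taking complex conjugates and using $\overline{\psi_1^\#}=\psi_1$, $\overline{\psi_4}=\psi_4^\#$ (both from (\ref{14})), then applying $\#$, one lands on $\psi_1^\#\psi_2+\psi_2^\#\psi_4=0$, which is (\ref{16}). The converse direction is just running the same chain of equivalences backward: given $\psi_1,\psi_2,\psi_4$ satisfying (\ref{14})--(\ref{16}), the identities above show that the resulting matrix $\mathbf{C}$ satisfies (\ref{Ka})--(\ref{Kc}), hence is a conjugation, and it commutes with $\mathbf{M}_z$ entrywise by construction.

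The only delicate point is the appearance of the $\#$ involution in (\ref{Kb}) and (\ref{Kc}): the natural computation yields conditions that look asymmetric between $\psi_1$ and $\psi_4$, and the main care is to use the already established symmetry $\psi_4^\#=\overline{\psi_4}$ (and its consequence $(|\psi_4|^2)^\#=|\psi_4|^2$) to symmetrize the equations to the form stated in the theorem. Once this bookkeeping with $\#$ is set up cleanly via the identity $J^\star M_f J^\star=M_{f^\#}$, the rest of the proof is a routine substitution.
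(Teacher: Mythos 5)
Your proposal is correct and follows essentially the same route as the paper: reduce each entry to the form $M_{\psi_i}J^\star$ via commutation with $M_z$, then translate the conjugation conditions \eqref{Ka}--\eqref{Kc} into the scalar identities \eqref{14}--\eqref{16} using $J^\star M_f J^\star=M_{f^\#}$ and the antilinear adjoint rules. Your handling of the $\#$-asymmetry in the second equation of \eqref{Kb} (using $(|\psi_4|^2)^\#=|\psi_4|^2$ to symmetrize) is in fact slightly more explicit than the paper's own computation.
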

\begin{proof}
Easy calculations show that $\mathbf{M}_z \mathbf{C}=\mathbf{C}\mathbf{M}_z$ if and only if $M_z D_i =D_i M_z$ for $i=1,2,4$ and $M_z D^\sharp_2=D_2^\sharp M_z$. Hence for $i=1,2,4$ we have
 $$M_z D_i J^\star= D_i M_z J^\star=D_i J^\star M_z .$$
Thus the linear operators $ D_i J^\star$ commute with $M_z$, so they have to be of the form $D_i J^\star =M_{\psi_i}$ for $\psi_i\in L^\infty$. 

%
%
The condition \eqref{Ka} implies that for $i=1,4$,
$ M_{\psi_i}J^\star=J^\star M_{\overline\psi_i}$, i.e., $\psi_i^{\#}=\overline\psi_i$. This means that $\psi_1$ and $\psi_4$ are symmetric, i.e., $\psi_i(z)=\psi_i(\bar z)$ for $i=1,4$ {(a.e. on $\mathbb{T}$)}.
By \eqref{Kb}, for $i=1,4$, we get
$$(M_{\psi_i}J^\star)^2+M_{\psi_2}J^\star J^\star M_{\overline\psi_2}=I$$
which is equivalent to
$$\psi_i^{\#} \psi_i+|\psi_2|^2=1.$$
Finally, by \eqref{Kc} we get
\begin{align}
J^\star M_{\overline\psi_2} M_{\psi_1}J^\star+J^\star M_{\psi_4^{\#}} M_{\overline\psi_2^{\#}}J^\star&=0
\end{align}
which is equivalent to
\begin{align}
  \overline\psi_2\psi_1+\psi_4^{\#}\overline\psi_2^{\#} &=0.\label{aa2}
\end{align}
Taking into consideration the fact that $\psi_1$ and $\psi_4$ are symmetric we obtain \eqref{16}. 
\end{proof}
\begin{remark}
Note that if $\psi_2=0$, then conditions \eqref{14}--\eqref{16} imply that
 $D_1$ and $D_4$ are conjugations which commute with $M_z$. Hence by \cite[Theorem 2.4]{CKLP} we get $D_1=M_{\psi_1}J^\star$, $D_4=M_{\psi_4}J^\star$. On the other hand, this is also a consequence of Theorem \ref{t2}.

If now $\psi_1=0$ (which is equivalent to $\psi_4=0$), then \eqref{15} implies that $|\psi_2|=1$.
\end{remark}

\begin{example}
The following conjugations satisfy Theorem \ref{t2}:
 $$
\mathbf{J}_1^\star=\left[   \begin{BMAT}{cc}{cc}
     J^\star & 0 \\
     0 & J^\star\\
   \end{BMAT} \right], \quad\text{or}\quad
   \mathbf{J}_2^\star=\left[   \begin{BMAT}{cc}{cc}
     0 & J^\star \\
     J^\star & 0\\
   \end{BMAT} \right]\quad\text{or}\quad
    \mathbf{C}=\frac{1}{\sqrt{2}}\left[   \begin{BMAT}{cc}{cc}
     J^\star & \phantom{-}J^\star \\
     J^\star & -J^\star\\
   \end{BMAT} \right]. $$
\end{example}
\begin{example}
   Let $\psi_1(z)=\psi_4(z)=\frac{1}{2}(\bar z+z)$ and $\psi_2(z)=\frac{1}{2i}(-\bar z+z)$. In other words, $\psi_1(e^{it})=\psi_4(e^{it})=\cos t$ and $\psi_2(e^{it})=\sin t$. Then $\psi_2^{\#}(z)=\frac{1}{2i}(\bar z-z)$, i.e., $\psi_2^{\#}(e^{it})=-\sin t$. Observe that such functions satisfy conditions \eqref{14}--\eqref{16}, so $\mathbf{C}=\left[   \begin{BMAT}{cc}{cc}
    M_{\psi_1} J^\star & M_{\psi_2} J^\star \\
    M_{\overline\psi_2^{\#}} J^\star & M_{\psi_4}J^\star\\
   \end{BMAT} \right]$ is a conjugation satisfying Theorem \ref{t2}. \end{example}

The characterization of all $M_z$--conjugations in $L^2$ was given in \cite{CKP}. It was proved that such conjugations are of the form $M_\psi \tilde J$, where $\psi\in L^\infty$, $|\psi|=1$. In the space $L^2(\mathbb{C}^2)$ the characterization is more complex.

\begin{theorem}\label{t3}
Let $\mathbf{C}$ be an antilinear operator on $L^2(\mathbb{C}^2)$. Then
$
\mathbf{C}=\left[   \begin{BMAT}{cc}{cc}
     D_1 & D_2 \\
     D_2^\sharp & D_4\\
   \end{BMAT} \right]$ is a conjugation such that $\mathbf{M}_z\mathbf{C}=\mathbf{C}\mathbf{M}_{\bar z}$ if and only if
   there are  functions $\psi_i\in L^\infty$, $i=1,2,4$, such that $D_i=M_{\psi_i}\tilde J$ and
   \begin{align}
  & |\psi_1|^2=|\psi_4|^2=1-|\psi_2|^2,\label{18}
  \\ &\overline\psi_1\psi_2+\overline\psi_2\psi_4=0.\label{19}
   \end{align}
\end{theorem}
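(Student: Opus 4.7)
The plan is to mirror the proof of Theorem \ref{t2}, with $\tilde J$ playing the role that $J^\star$ played there, since $\tilde J$ is the natural conjugation satisfying $\tilde J M_z = M_{\bar z}\tilde J$. I expect the argument to be essentially parallel, with one telling simplification: the self-adjointness condition \eqref{Ka} will turn out to be automatic, which explains why Theorem \ref{t3} has no analogue of \eqref{14}.

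First I would observe that the block identity $\mathbf{M}_z \mathbf{C} = \mathbf{C}\mathbf{M}_{\bar z}$ is equivalent to $M_z D_i = D_i M_{\bar z}$ for $i=1,2,4$, together with $M_z D_2^\sharp = D_2^\sharp M_{\bar z}$. Composing on the right with $\tilde J$ and using $\tilde J M_z = M_{\bar z}\tilde J$, each $D_i \tilde J$ becomes a bounded \emph{linear} operator commuting with $M_z$, hence is multiplication by some $\psi_i \in L^\infty$. Therefore $D_i = M_{\psi_i}\tilde J$ for $i=1,2,4$, and the condition on $D_2^\sharp$ is automatically encoded by the identity $D_2^\sharp = \tilde J M_{\bar\psi_2}$.

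Next I would translate the conjugation conditions \eqref{Ka}--\eqref{Kc}. For \eqref{Ka}, I would compute $(M_{\psi_i}\tilde J)^\sharp = \tilde J^\sharp M_{\psi_i}^* = \tilde J M_{\bar \psi_i}$ and then use the pointwise check $\tilde J M_{\bar\psi_i} f = \overline{\bar\psi_i f} = \psi_i \bar f = M_{\psi_i}\tilde J f$ to conclude $D_i^\sharp = D_i$ for free, for every $i$. This removes the need for any symmetry condition on $\psi_1, \psi_4$. Then, using the identity $\tilde J M_\varphi \tilde J = M_{\bar\varphi}$, I would compute
\begin{equation*}
D_i D_i^\sharp = M_{\psi_i}\tilde J \tilde J M_{\bar\psi_i} = M_{|\psi_i|^2}, \qquad D_2^\sharp D_2 = \tilde J M_{\bar\psi_2} M_{\psi_2}\tilde J = M_{|\psi_2|^2},
\end{equation*}
so that \eqref{Kb} becomes $|\psi_1|^2 + |\psi_2|^2 = 1$ and $|\psi_2|^2 + |\psi_4|^2 = 1$, giving \eqref{18}. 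For \eqref{Kc}, I would compute $D_2^\sharp D_1 = \tilde J M_{\bar\psi_2 \psi_1}\tilde J = M_{\psi_2 \bar\psi_1}$ and $D_4 D_2^\sharp = M_{\psi_4} \tilde J \tilde J M_{\bar\psi_2} = M_{\psi_4 \bar\psi_2}$, so that \eqref{Kc} reads $\bar\psi_1 \psi_2 + \bar\psi_2 \psi_4 = 0$, which is \eqref{19}.

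The converse direction is immediate: if $D_i = M_{\psi_i}\tilde J$ with $\psi_i$ satisfying \eqref{18}--\eqref{19}, the same calculations show that \eqref{Ka}--\eqref{Kc} hold, so $\mathbf{C}$ is a conjugation, and it intertwines $\mathbf{M}_z$ and $\mathbf{M}_{\bar z}$ by construction. There is no real obstacle in this argument; the only potentially delicate point is the bookkeeping with the antilinear adjoint $\sharp$ and the identities $\tilde J M_\varphi = M_{\bar\varphi}\tilde J$, $\tilde J M_\varphi \tilde J = M_{\bar\varphi}$. The conceptual content is the observation that $\tilde J$-based ansätze for $D_i$ automatically satisfy \eqref{Ka}, so the characterization reduces from three equations to two.
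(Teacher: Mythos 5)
Your proposal is correct and follows essentially the same route as the paper's proof: intertwining forces $D_i\tilde J$ to commute with $M_z$, hence $D_i=M_{\psi_i}\tilde J$, condition \eqref{Ka} is automatic for such operators, and \eqref{Kb}, \eqref{Kc} translate directly into \eqref{18} and \eqref{19}. The only difference is that you spell out the computation showing $(M_{\psi_i}\tilde J)^\sharp=M_{\psi_i}\tilde J$, which the paper merely asserts.
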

\begin{proof}
Note that $\mathbf{M}_z\mathbf{C}=\mathbf{C}\mathbf{M}_{\bar z}$ if and only if
\begin{equation}\label{mz1}
M_z D_i =D_i M_{\bar z} \text{  for } i=1,2,4.\end{equation}
Hence for $i=1,2,4$ we have
\begin{equation*} M_z D_i\tilde J=D_i M_{\bar z}\tilde J=D_i \tilde J M_z .\end{equation*}
Thus the linear operators $D_i\tilde J$ commute with $M_z$, so they have to be of the form $ D_i\tilde J=M_{\psi_i}$ for $\psi_i\in L^\infty$. Hence $D_i=M_{\psi_i}\tilde J$ for $\psi_i\in L^\infty$.

%

Note that $\mathbf{C}$ has to satisfy the conditions \eqref{Ka}--\eqref{Kc}. The condition \eqref{Ka} is satisfied automatically since  $D_i=M_{\psi_i}\tilde J$ are antilinearly self-adjoint for $\psi_i\in L^\infty$.
By checking \eqref{Kb} we get
$$( M_{\psi_1}\tilde J)^2+ M_{\psi_2}\tilde J \tilde J M_{\overline\psi_2}=I \text{ and } ( M_{\psi_4}\tilde J)^2+ M_{\psi_2}\tilde J \tilde J M_{\overline\psi_2}=I,$$
which is equivalent to \eqref{18}.
Finally by \eqref{Kc} we get
\begin{align*}
\tilde J M_{\overline\psi_2} M_{\psi_1} \tilde J+ M_{\psi_4}\tilde J\tilde J M_{\overline\psi_2}&=0
\end{align*}
which is equivalent to \eqref{19}.
\end{proof}
\begin{remark}
Note that if \eqref{18}--\eqref{19} are satisfied and $\psi_2=0$, then $|\psi_1|=|\psi_4|=1$. Hence $D_1$ and $D_4$ are $M_z$--conjugations in $L^2$.
On the other hand, if $\psi_1=0$ (which is equivalent to $\psi_4=0$), then $|\psi_2|=1$, which implies that $D_2$ is an $M_z$--conjugation.
\end{remark}
\begin{example}
The following conjugations satisfy Theorem \ref{t3}:
 $$
\widetilde{\mathbf{J}}_1=\left[   \begin{BMAT}{cc}{cc}
     \tilde J & 0 \\
     0 & \tilde J\\
   \end{BMAT} \right], \quad\text{or}\quad
\widetilde{\mathbf{J}}_2=\left[   \begin{BMAT}{cc}{cc}
     0 & \tilde J \\
     \tilde J & 0\\
   \end{BMAT} \right],\quad\text{or}\quad
\mathbf{C}=\frac{1}{\sqrt{2}}\left[   \begin{BMAT}{cc}{cc}
     \tilde J & \phantom{-}\tilde J \\
     \tilde J & -\tilde J\\
   \end{BMAT} \right]. $$
\end{example}
\begin{example}
   Let $\psi_1(z)=-\psi_4(z)=\frac{1}{2i}(-\bar z+z)$ and $\psi_2(z)=\frac{1}{2}(\bar z+z)$. In other words, $\psi_1(e^{it})=-\psi_4(e^{it})=\sin t$ and $\psi_2(e^{it})=\cos t$. Observe that such functions satisfy conditions \eqref{18}--\eqref{19}, so $\mathbf{C}=\left[   \begin{BMAT}{cc}{cc}
    M_{\psi_1} \tilde J & M_{\psi_2} \tilde J \\
     M_{\psi_2} \tilde J & M_{\psi_4}\tilde J\\
   \end{BMAT} \right]$ is a conjugation satisfying Theorem \ref{t3}.
\end{example}

\section{Operator valued functions}

{Let $\h$ be a complex separable Hilbert space. Denote by $L^2(\h)$ the space of all (classes of) functions $\f\colon\mathbb{T}\to \h$ which are measurable and satisfy
\begin{equation}\label{norma}
 \int_{\mathbb{T}}\| \f(z)\|^2dm(z)<\infty
\end{equation}
(the norm $\|\cdot\|$ under the integral is the norm in $\h$). So $\f\in L^2(\h)$ is understood as a class (represented by $\f$) of all measurable functions satisfying \eqref{norma} and equal to $f$ on $\mathbb{T}$ a.e. with respect to $m$. Recall that the measurability of $\f$ means that $z\mapsto \|\f(z)\|$ is a measurable function (or that $z\mapsto \langle \f(z),x\rangle$ is measurable for every $x\in \h$, which due to separability of $\h$ is an equivalent definition).}

 {The space $L^2(\h)$ is a Hilbert space with the inner product given by
$$\langle \f,\g \rangle=\int \langle \f(z), \g(z)\rangle dm,\quad \f,\g\in L^2(\h),$$
where the inner product under the integral is the inner product in $\h$ (note that $L^2(\h)$ is also separable).}

 {Functions in $L^2(\h)$ have an expansion analogous to the Fourier expansion in $L^2$. Observe that if $\f \in L^2(\h)$, then for each $n\in\mathbb{Z}$ the linear functional $y\mapsto \overline{\int_{\mathbb{T}}\langle \f(z),y\rangle \overline{z}^n dm(z)}$ is bounded and so there exists $x_n\in\h$ such that
\begin{equation}\label{fourier}
\langle x_n,y\rangle=\int_{\mathbb{T}}\langle \f(z),y\rangle \overline{z}^n dm(z)\quad \text{for all }y\in\h.
\end{equation}
The element $x_n$ is called the {\it n-th Fourier coefficient of}\ \  $\f$. It turns out that $\f\in L^2(\h)$ can be expressed as $\f=\sum\limits_{n=-\infty}^\infty x_n e_n $, where $x_n$ is given by \eqref{fourier}, $e_n(z)=z^n$ and the series converges in the norm of $L^2(\h)$. Moreover, for $\f=\sum\limits_{n=-\infty}^\infty x_n e_n\in L^2(\h)$ and $\g=\sum\limits_{n=-\infty}^\infty y_n e_n\in L^2(\h)$ we have
$$\|\f\|^2=\sum\limits_{n=-\infty}^\infty \|x_n\|^2$$
and
$$\langle\f,\g\rangle=\sum\limits_{n=-\infty}^\infty \langle x_n,y_n\rangle,$$
and this correspondence between elements of $L^2(\h)$ and sequences $\{x_n \}_{n=-\infty}^{\infty}\subset\h$ such that $\sum\limits_{n=-\infty}^\infty \|x_n\|^2<\infty$ is one--to--one (see, e.g., \cite[pp. 46--48]{RR}).}

 {Denote by $H^2(\h)$  the subspace of $L^2(\h)$ consisting of those functions from $L^2(\h)$ whose Fourier coefficients with negative indices are 0, i.e.,
$$H^2(\h)=\left\{\f\in L^2(\h): \f=\sum_{n=0}^{\infty} x_n e_n\right\}.$$
Each $\f
\in H^2(\h)$ can be also identified with a function
$$\f(\lambda)=\sum\limits_{n=0}^\infty x_n\lambda^n,\quad\lambda\in\mathbb{D},$$
which is analytic in the unit disk $\mathbb{D}$. Thus $H^2(\h)$ can be seen as a subspace of $L^2(\h)$ or as a space of functions analytic in $\mathbb{D}$ and with values in $\h$. The boundary values on $\mathbb{T}$ can be then obtained through radial limits (here the radial functions converge to the boundary function in the $L^2(\h)$ norm). For more details see also \cite{berc,NF}.}

 {A function $\mathbf{F}\colon\mathbb{T}\to L(\h)$ is said to be measurable, if for every $x\in \h$ the function $z\mapsto \mathbf{F}(z)x$ is measurable. Let us denote by $L^\infty(L(\h))$ the space of (again, classes of) all such measurable essentially bounded
 functions $\mathbf{F}$ which are essentially bounded, i.e,
 $$\|\mathbf{F}\|_{\infty}=\sees_{z\in\mathbb{T}}\|\mathbf{F}(z)\|<\infty$$
 (where $\|\mathbb{F}(z)\|$ denotes the operator norm of $\mathbb{F}(z)\in L(\h)$). For each $\mathbf{F}\in L^\infty(L(\h))$ we define a bounded linear operator $M_\mathbf{F}$ on $L^2(\h)$: for $\f\in L^2(\h)$,
  \[(M_\mathbf{F}\,\f)(z)=\mathbf{F}(z)\f(z)\quad \text{a.e. on }\mathbb{T} .\]
In particular, for $\f\in L^2(\h)$, we have, a.e. on $\mathbb{T}$,
$$ (\mathbf{M}_z \f)(z)=z\f(z)\quad \text{and}\quad (\mathbf{M}_{\bar z}\f)(z)=\bar z\f(z),$$
that is, $\mathbf{M}_z=M_{z\mathbf{I}_{\h}}$ and $\mathbf{M}_{\bar z}=M_{\bar z\mathbf{I}_{\h}}$. For $\mathbf{F}\in L^\infty(L(\h))$ the adjoint function $\mathbf{F}^*$ is naturally defined as $\mathbf{F}^*(z)=\mathbf{F}(z)^*$ (a.e. on $\mathbb{T}$) and we also define $\mathbf{F}^{\#}$ by $(\mathbf{F}^{\#} )(z)=\mathbf{F}(\bar z)^*$ (a.e. on $\mathbb{T}$). Clearly, $M^*_\mathbf{F}=M_{\mathbf{F}^*}$. If $\mathbf{F}\colon \mathbb{T}\to L(\h)$ is a constant function, $\mathbf{F}(z)=F$, we will denote by $F$ its action on $L^2(\h)$.}

   {Similarly, we define a measurable function $\mathbf{F}\colon\mathbb{T}\to LA(\h)$ and denote by $L^\infty(LA(\h))$ the space of all measurable essentially bounded
 functions valued in the space $LA(\h)$. For each $\mathbf{C}\in L^\infty(LA(\h))$ we define a bounded antilinear operator $A_\mathbf{C}$ on $L^2(\h)$:  for $\f\in L^2(\h)$,
  \[(A_\mathbf{C}\,\f)(z)=\mathbf{C}(z)\f(z)\quad \text{a.e. on }\mathbb{T}.\]}
 {Assume now that $\dim\h<\infty$. Then $L(\h)$ endowed with the Hilbert-Schmidt norm can be seen as a Hilbert space and we can consider the space $L^2(L(\h))$ defined as above (recall that the inner product in $L(\h)$ is then given by $\langle A,B\rangle=\tr(B^*A)$ for $A,B\in L(\h)$, see \cite[pp. 86--93]{conway2}). In that case $L^{\infty}(L(\h))\subset L^2(L(\h))$ (since here $\dim L(\h)<\infty$ and the Hilbert-Schmidt norm and the operator norm are equivalent), so every $\mathbf{F}\in L^{\infty}(L(\h))$ admits a Fourier expansion
\begin{equation}\label{jeden}
\mathbf{F}=\sum_{n=-\infty}^{\infty}{F}_n e_n,
\end{equation}
with ${F}_n\in L(\h)$ for $n\in\mathbb{Z}$ (the series converges in the $L^2(L(\h))$ norm). Let
\begin{displaymath}
\begin{split}
H^{\infty}(L(\h))&=L^{\infty}(L(\h))\cap H^2(L(\h))\\
&=\left\{\mathbf{F}\in L^{\infty}(L(\h))\ \colon\ \mathbf{F}=\sum_{n=0}^{\infty}{F}_n e_n\right\}.
\end{split}
\end{displaymath}
As mentioned above, every $\mathbf{F}=\sum_{n=0}^{\infty}{F}_n e_n\in H^{\infty}(L(\h))$ can be then extended to a function analytic in $\mathbb{D}$ by the formula
$$\mathbf{F}(\lambda)=\sum_{n=0}^{\infty}{F}_n\lambda^n,\quad\lambda\in\mathbb{D},$$
and this extension is bounded, $\displaystyle{\sup_{\lambda\in\mathbb{D}}\|\mathbf{F}(\lambda)\|<\infty}$. Moreover, every such bounded analytic function has boundary values a.e. on $\mathbb{T}$ (radial limits in the $L^2(L(\h))$ norm) and the boundary function belongs to $H^{\infty}(L(\h))$. Therefore a function from $H^{\infty}(L(\h))$ can be seen as an element of $L^{\infty}(L(\h))$ or as a bounded analytic operator valued function in $\mathbb{D}$ (see \cite[p. 232]{berc}).}

\section{$\mathbf{M}_z$--commuting and $\mathbf{M}_z$--conjugations in $L^2(\h)$}

For an arbitrary conjugation $J$ in $\h$ let us define two conjugations $\widetilde{\mathbf{J}}$ and $\mathbf{J}^\star$ on $L^2(\h)$ given by
\begin{equation}\label{jt}
(\widetilde{\mathbf{J}} \f)(z)=J(\f(z)) {\quad \text{a.e. on }\mathbb{T}}
\end{equation}
and
\begin{equation}\label{jg}
(\mathbf{J}^\star \f)(z)=\f^{\#}(z)=J(\f(\bar z)) {\quad \text{a.e. on }\mathbb{T}.}
\end{equation}
 {Note that for $\f=\sum\limits_{n=-\infty}^{\infty} x_n e_n\in L^2(\h)$,
\begin{equation}\label{jdef}
\widetilde{\mathbf{J}}\f=\sum_{n=-\infty}^{\infty} J(x_{-n})e_n
\quad\text{ and }\quad \mathbf{J}^\star \f=\sum_{n=-\infty}^{\infty}J(x_n)e_n.
\end{equation}
Indeed, if $\displaystyle{\widetilde{\mathbf{J}}\f=\sum_{n=-\infty}^{\infty} y_n e_n}$ is the Fourier expansion of $\widetilde{\mathbf{J}}\f$, then by \eqref{fourier}, for each $n\in\mathbb{Z}$ and $y\in \h$,
\begin{displaymath}
\begin{split}
\langle y_n,y\rangle&=\int_{\mathbb{T}}\langle (\widetilde{\mathbf{J}}\f)(z),y\rangle \overline{z}^n dm(z)=\int_{\mathbb{T}}\langle J(\f(z)),y\rangle \overline{z}^n dm(z)\\
&=\overline{\int_{\mathbb{T}}\langle \f(z),Jy\rangle {z}^n dm(z)}=\overline{\langle x_{-n},Jy\rangle}=\langle J(x_{-n}),y\rangle
\end{split}
\end{displaymath}
and so $y_n= J(x_{-n})$. 
Similarly, if $\displaystyle{\mathbf{J}^\star\f=\sum_{n=-\infty}^{\infty}y_n e_n}$, then for each $n\in\mathbb{Z}$ and $y\in \h$,
\begin{displaymath}
\begin{split}
\langle y_n,y\rangle&=\int_{\mathbb{T}}\langle (\mathbf{J}^\star\f)(z),y\rangle \overline{z}^n dm(z)=\int_{\mathbb{T}}\langle J(\f(\overline{z})),y\rangle \overline{z}^n dm(z)\\
&=\overline{\int_{\mathbb{T}}\langle \f(z),Jy\rangle \overline{z}^n dm(z)}=\overline{\langle x_{n},Jy\rangle}=\langle J(x_{n}),y\rangle
\end{split}
\end{displaymath}
and $y_n= J(x_{n})$. Observe that by \eqref{jdef},
$$\mathbf{J}^\star(H^2(\h))\subset H^2(\h) \quad\text{and}\quad\widetilde{\mathbf{J}}(H^2(\h))\subset H^2(\h)^{\perp}.$$
Moreover, conjugations $\widetilde{\mathbf{J}}$ and $\mathbf{J}^\star$ have the following properties:}
\begin{proposition}\label{js} For $\f\in L^2(\h)$ we have
\begin{enumerate}
\item $(\widetilde{\mathbf{J}}\,\mathbf{J}^\star \f)(z)=(\mathbf{J}^\star\, \widetilde{\mathbf{J}}\f)(z)=\f(\bar z)$  {for almost all $z\in\mathbb{T}$,}
\item $\widetilde{\mathbf{J}}\,\mathbf{M}_z=\mathbf{M}_{\bar z}\,\widetilde{\mathbf{J}}$,
\item $\mathbf{J}^\star\,\mathbf{M}_z=\mathbf{M}_z\,\mathbf{J}^\star$.
\end{enumerate}
\end{proposition}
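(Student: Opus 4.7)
The plan is to verify each of the three identities pointwise almost everywhere on $\mathbb{T}$, directly from the defining formulas \eqref{jt} and \eqref{jg}. Two properties of the fixed conjugation $J$ on $\h$ do all the work: the involutive identity $J^2=I_\h$, and antilinearity, which gives $J(\alpha x)=\bar\alpha J(x)$ for every $\alpha\in\mathbb{C}$ and $x\in\h$.

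For (1) the composition collapses by $J^2=I_\h$. At a.e.\ $z\in\mathbb{T}$,
\[
(\widetilde{\mathbf{J}}\,\mathbf{J}^\star\f)(z)=J\bigl((\mathbf{J}^\star\f)(z)\bigr)=J\bigl(J(\f(\bar z))\bigr)=\f(\bar z),
\]
and symmetrically $(\mathbf{J}^\star\,\widetilde{\mathbf{J}}\f)(z)=J\bigl((\widetilde{\mathbf{J}}\f)(\bar z)\bigr)=J\bigl(J(\f(\bar z))\bigr)=\f(\bar z)$, which gives both equalities.

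For (2) and (3) the antilinearity of $J$ turns multiplication by $z$ into multiplication by $\bar z$, and vice versa. Thus at a.e.\ $z\in\mathbb{T}$,
\[
(\widetilde{\mathbf{J}}\mathbf{M}_z\f)(z)=J\bigl(z\f(z)\bigr)=\bar z\,J(\f(z))=\bar z\,(\widetilde{\mathbf{J}}\f)(z)=(\mathbf{M}_{\bar z}\widetilde{\mathbf{J}}\f)(z),
\]
proving (2), and
\[
(\mathbf{J}^\star\mathbf{M}_z\f)(z)=J\bigl((\mathbf{M}_z\f)(\bar z)\bigr)=J(\bar z\f(\bar z))=z\,J(\f(\bar z))=z\,(\mathbf{J}^\star\f)(z)=(\mathbf{M}_z\mathbf{J}^\star\f)(z),
\]
proving (3).

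There is no genuine obstacle here; the argument is a matter of unwinding the pointwise definitions and applying the two defining properties of $J$. As an alternative one could work on the Fourier side via \eqref{jdef}: for $\f=\sum x_n e_n$ one has $\mathbf{M}_z\f=\sum x_n e_{n+1}$, so that $\widetilde{\mathbf{J}}\mathbf{M}_z\f=\sum J(x_{-n-1})e_n=\mathbf{M}_{\bar z}\widetilde{\mathbf{J}}\f$, and likewise $\mathbf{J}^\star\mathbf{M}_z\f=\sum J(x_{n-1})e_n=\mathbf{M}_z\mathbf{J}^\star\f$; part (1) follows analogously by comparing the coefficients of $\f=\sum x_n e_n$ and $\sum x_n e_{-n}$, this latter being exactly $\f(\bar z)$. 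Either route is straightforward.
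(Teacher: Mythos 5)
Your proof is correct. The paper states Proposition \ref{js} without any proof, evidently regarding it as an immediate consequence of the definitions \eqref{jt} and \eqref{jg}; your pointwise verification (using only $J^2=I_{\h}$ and the antilinearity of $J$) is exactly the routine computation being suppressed, and it matches the style of the analogous calculations the paper does write out, e.g.\ \eqref{ej1}--\eqref{ej2} in the proof of Proposition \ref{p2}.
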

\begin{proposition}\label{p2}
Let $J$ be a conjugation in $\h$, and let $\mathbf{F}\in L^\infty(L(\h))$. Then
\begin{enumerate}
\item $M_{\mathbf{F}}$ is $\widetilde{\mathbf{J}}$--symmetric if and only if $\mathbf{F}(z)$ is $J$--symmetric for almost all $z\in \mathbb{T}$.
\item  $M_{\mathbf{F}}$ is $\mathbf{J}^\star$--symmetric if and only if $J\mathbf{F}(z)J=\mathbf{F}^{\#}(z)$ for almost all $z\in \mathbb{T}$.
\item $\widetilde{\mathbf{J}}M_{\mathbf{F}}\widetilde{\mathbf{J}}=M_{\mathbf{F}^*}$ if and only if
 $\mathbf{J}^\star M_{\mathbf{F}}\mathbf{J}^\star=M_{\mathbf{F}^{\#}}$.
\item  $\widetilde{\mathbf{J}}M_{\mathbf{F}}\widetilde{\mathbf{J}}=M_{\mathbf{F}^{\#}}$ if and only if
 $\mathbf{J}^\star M_{\mathbf{F}}\mathbf{J}^\star=M_{\mathbf{F}^*}$.
\item if $\mathbf{F}(\bar z)=\mathbf{F}(z)$ for almost all $z\in \mathbb{T}$ , then  $M_{\mathbf{F}}$ is $\widetilde{\mathbf{J}}$--symmetric if and only if it is $\mathbf{J}^\star$--symmetric.
\end{enumerate}
\end{proposition}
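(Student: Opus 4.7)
The plan is to reduce all five assertions to pointwise identities on the symbol $\mathbf{F}$ by first computing the antilinear conjugates $\widetilde{\mathbf{J}} M_{\mathbf{F}}\widetilde{\mathbf{J}}$ and $\mathbf{J}^{\star} M_{\mathbf{F}}\mathbf{J}^{\star}$ explicitly as multiplication operators on $L^2(\h)$. For $\f\in L^2(\h)$ and a.e.\ $z\in\mathbb{T}$, the definitions \eqref{jt} and \eqref{jg} give
\begin{equation*}
(\widetilde{\mathbf{J}} M_{\mathbf{F}}\widetilde{\mathbf{J}}\,\f)(z)=J\bigl(\mathbf{F}(z)\,J(\f(z))\bigr)=\bigl(J\mathbf{F}(z)J\bigr)\f(z)
\end{equation*}
and
\begin{equation*}
(\mathbf{J}^{\star} M_{\mathbf{F}}\mathbf{J}^{\star}\,\f)(z)=J\bigl(\mathbf{F}(\bar z)\,J(\f(z))\bigr)=\bigl(J\mathbf{F}(\bar z)J\bigr)\f(z).
\end{equation*}
Since $z\mapsto J\mathbf{F}(z)J$ and $z\mapsto J\mathbf{F}(\bar z)J$ are essentially bounded $L(\h)$-valued functions (two antilinear maps composed with a linear one produce a linear operator), this yields $\widetilde{\mathbf{J}} M_{\mathbf{F}}\widetilde{\mathbf{J}}=M_{\mathbf{G}}$ with $\mathbf{G}(z)=J\mathbf{F}(z)J$ and $\mathbf{J}^{\star} M_{\mathbf{F}}\mathbf{J}^{\star}=M_{\mathbf{H}}$ with $\mathbf{H}(z)=J\mathbf{F}(\bar z)J$. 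Using the injectivity of $\mathbf{F}\mapsto M_{\mathbf{F}}$ on $L^{\infty}(L(\h))$, every assertion then becomes an almost-everywhere identity drawn from the symbols $J\mathbf{F}(z)J$, $J\mathbf{F}(\bar z)J$, $\mathbf{F}(z)^{*}$, and $\mathbf{F}^{\#}(z)=\mathbf{F}(\bar z)^{*}$.

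With that reduction in hand, the verifications are bookkeeping. Assertion (1) reads $J\mathbf{F}(z)J=\mathbf{F}(z)^{*}$ a.e., which is precisely pointwise $J$-symmetry of $\mathbf{F}(z)$. Assertion (2) reads $J\mathbf{F}(\bar z)J=\mathbf{F}(z)^{*}$ a.e.; since $z\mapsto\bar z$ preserves $m$ on $\mathbb{T}$, the substitution $z\leftrightarrow\bar z$ rewrites this as $J\mathbf{F}(z)J=\mathbf{F}(\bar z)^{*}=\mathbf{F}^{\#}(z)$, which is the stated condition. For (3) the two displayed identities read $J\mathbf{F}(z)J=\mathbf{F}(z)^{*}$ and $J\mathbf{F}(\bar z)J=\mathbf{F}(\bar z)^{*}$ a.e., which are equivalent via $z\mapsto\bar z$; assertion (4) follows in the same way, the common content being $J\mathbf{F}(z)J=\mathbf{F}(\bar z)^{*}$ up to change of variable. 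Finally, (5) is immediate from (1) and (2): the hypothesis $\mathbf{F}(\bar z)=\mathbf{F}(z)$ a.e.\ forces $\mathbf{F}^{\#}(z)=\mathbf{F}(z)^{*}$, so the symbol conditions characterizing $\widetilde{\mathbf{J}}$- and $\mathbf{J}^{\star}$-symmetry coincide.

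I do not foresee any serious obstacle. The two mild facts that should be kept in mind are the injectivity of the correspondence $\mathbf{F}\mapsto M_{\mathbf{F}}$ on $L^{\infty}(L(\h))$ (so that two multiplication operators agreeing on $L^2(\h)$ have symbols equal a.e.), and that $z\mapsto\bar z$ is a measure-preserving involution of $(\mathbb{T},m)$, so a pointwise identity holds a.e.\ if and only if its $z\mapsto\bar z$ substitute does. Once these are taken for granted, the proposition reduces to the elementary symbol computations above.
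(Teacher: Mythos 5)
Your proposal is correct and follows essentially the same route as the paper: both compute $(\widetilde{\mathbf{J}}M_{\mathbf{F}}\widetilde{\mathbf{J}}\f)(z)=J\mathbf{F}(z)J(\f(z))$ and $(\mathbf{J}^{\star}M_{\mathbf{F}}\mathbf{J}^{\star}\f)(z)=J\mathbf{F}(\bar z)J(\f(z))$ and then read off (1)--(5) by comparing symbols a.e. Your extra remarks on injectivity of $\mathbf{F}\mapsto M_{\mathbf{F}}$ and on $z\mapsto\bar z$ being measure preserving only make explicit what the paper leaves implicit.
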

\begin{proof}
Note that for $\f\in L^2(\h)$ we have
\begin{equation}\label{ej1}
\begin{split}
(\widetilde{\mathbf{J}}M_{\mathbf{F}}\widetilde{\mathbf{J}}\f)(z)&=J((M_{\mathbf{F}}\widetilde{\mathbf{J}}\f)(z))\\
&=J(\mathbf{F}(z)(\widetilde{\mathbf{J}}\f)(z))=J\mathbf{F}(z)J(\f(z)).
\end{split}
\end{equation}
Hence (1) is proved.
Similarly, (2) follows from the equality
\begin{equation}\label{ej2}
\begin{split}
({\mathbf{J}}^\star M_{\mathbf{F}}{\mathbf{J}}^\star \f)(z)&=J((M_{\mathbf{F}}{\mathbf{J}^\star}\f)(\bar z))
\\&=J(\mathbf{F}(\bar z)({\mathbf{J}}^\star \f)(\bar z))=J\mathbf{F}(\bar z)J(\f(z)).
\end{split}
\end{equation}
Comparing \eqref{ej1} with \eqref{ej2} we get (3) and (4). Condition (5) follows from (3), since $\mathbf{F}(\bar z)=\mathbf{F}(z)$  {(a.e. on $\mathbb{T}$)} if and only if $\mathbf{F}(z)^*=\mathbf{F}^{\#}(z)$  {(a.e. on $\mathbb{T}$)}.
\end{proof}

The following theorem gives a characterization of all $\mathbf{M}_z$--commuting conjugations in $L^2(\h)$.
\begin{theorem}\label{8.1}
Let $J$ be a conjugation  in $\h$. Then the following conditions are equivalent:
\begin{enumerate}
\item  $\mathbf{C}$ is a conjugation in $L^2(\h)$ such that $\mathbf{C}\mathbf{M}_z=\mathbf{M}_z\mathbf{C}$,
\item there is $\mathbf{U}\in L^\infty(L(\h))$ such that $\mathbf{U}(z)$ is a unitary operator for almost all $z\in\mathbb{T}$, ${M}_{\mathbf{U}}$ is $\mathbf{J}^\star$--symmetric and $\mathbf{C}={M}_{\mathbf{U}}\mathbf{J}^\star=\mathbf{J}^\star {M}_{\mathbf{U}^*}$.
    \end{enumerate}

\end{theorem}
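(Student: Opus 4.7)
The plan is to prove $(2)\Rightarrow(1)$ by direct verification and $(1)\Rightarrow(2)$ by twisting $\mathbf{C}$ with $\mathbf{J}^\star$ so as to reduce to the classical commutant of $\mathbf{M}_z$ on $L^2(\h)$.

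For $(2)\Rightarrow(1)$, suppose $\mathbf{U}\in L^\infty(L(\h))$ has $\mathbf{U}(z)$ unitary a.e.\ and $M_{\mathbf{U}}$ is $\mathbf{J}^\star$-symmetric, i.e., $\mathbf{J}^\star M_{\mathbf{U}}\mathbf{J}^\star=M_{\mathbf{U}}^*=M_{\mathbf{U}^*}$. Setting $\mathbf{C}=M_{\mathbf{U}}\mathbf{J}^\star$, antilinearity and isometry are immediate from unitarity of $M_{\mathbf{U}}$ and the antilinear isometry of $\mathbf{J}^\star$. The involution property follows from
\[
\mathbf{C}^2 \;=\; M_{\mathbf{U}}\mathbf{J}^\star M_{\mathbf{U}}\mathbf{J}^\star \;=\; M_{\mathbf{U}}M_{\mathbf{U}^*} \;=\; M_{\mathbf{U}\mathbf{U}^*} \;=\; I,
\]
while commutation with $\mathbf{M}_z$ is automatic by Proposition \ref{js}(3). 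Multiplying $\mathbf{J}^\star M_{\mathbf{U}}\mathbf{J}^\star = M_{\mathbf{U}^*}$ by $\mathbf{J}^\star$ on the left yields the alternative form $\mathbf{C}=\mathbf{J}^\star M_{\mathbf{U}^*}$.

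For $(1)\Rightarrow(2)$, I set $\mathbf{V}=\mathbf{C}\mathbf{J}^\star$. As the composition of two antilinear isometric bijections, $\mathbf{V}$ is linear, bijective and isometric, hence unitary on $L^2(\h)$. Using $\mathbf{C}\mathbf{M}_z=\mathbf{M}_z\mathbf{C}$ and Proposition \ref{js}(3),
\[
\mathbf{V}\mathbf{M}_z \;=\; \mathbf{C}\mathbf{J}^\star\mathbf{M}_z \;=\; \mathbf{C}\mathbf{M}_z\mathbf{J}^\star \;=\; \mathbf{M}_z\mathbf{C}\mathbf{J}^\star \;=\; \mathbf{M}_z\mathbf{V},
\]
so $\mathbf{V}$ commutes with $\mathbf{M}_z$; taking adjoints, $\mathbf{V}$ also commutes with $\mathbf{M}_{\bar z}=\mathbf{M}_z^*$. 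I then invoke the classical vector-valued fact that a bounded operator on $L^2(\h)$ commuting with $\mathbf{M}_z$ and $\mathbf{M}_{\bar z}$ must be of the form $M_{\mathbf{U}}$ with $\mathbf{U}\in L^\infty(L(\h))$. Because $\mathbf{V}$ is unitary, $\mathbf{U}(z)$ is unitary a.e., and $\mathbf{C}=\mathbf{V}\mathbf{J}^\star=M_{\mathbf{U}}\mathbf{J}^\star$.

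Finally, $\mathbf{C}^2=I$ rewrites as $M_{\mathbf{U}}\mathbf{J}^\star M_{\mathbf{U}}\mathbf{J}^\star=I$, i.e.\ $\mathbf{J}^\star M_{\mathbf{U}}\mathbf{J}^\star=M_{\mathbf{U}}^{-1}=M_{\mathbf{U}^*}=M_{\mathbf{U}}^*$, which is exactly the $\mathbf{J}^\star$-symmetry of $M_{\mathbf{U}}$; the expression $\mathbf{C}=\mathbf{J}^\star M_{\mathbf{U}^*}$ is then obtained as in the easy direction. The only step beyond routine manipulations with Propositions \ref{js} and \ref{p2} is the identification of the commutant of the scalar multiplication algebra on $L^2(\h)$ with $\{M_{\mathbf{U}}:\mathbf{U}\in L^\infty(L(\h))\}$; this is the place where the lifting of the scalar result to operator-valued symbols enters and, implicitly, where separability of $\h$ is used.
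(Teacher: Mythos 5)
Your proof is correct and follows essentially the same route as the paper: twist $\mathbf{C}$ by $\mathbf{J}^\star$ to obtain a unitary commuting with $\mathbf{M}_z$, identify it with $M_{\mathbf{U}}$ for a unitary-valued $\mathbf{U}\in L^\infty(L(\h))$ via the classical commutant result (the paper cites Radjavi--Rosenthal for exactly this step), and extract the $\mathbf{J}^\star$-symmetry of $M_{\mathbf{U}}$ from $\mathbf{C}^2=I$. The only cosmetic difference is that you phrase the symmetry condition at the operator level ($\mathbf{J}^\star M_{\mathbf{U}}\mathbf{J}^\star=M_{\mathbf{U}}^*$) whereas the paper first derives the pointwise identity $J\mathbf{U}(\bar z)J=\mathbf{U}(z)^*$ and then invokes Proposition \ref{p2}(2); these are equivalent.
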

\begin{proof} 
From the equality $\mathbf{C}\mathbf{M}_z=\mathbf{M}_z\mathbf{C}$ and Proposition \ref{js} we have
\begin{equation}\label{eq71}
\mathbf{C}\mathbf{J}^\star \mathbf{M}_z=\mathbf{C} \mathbf{M}_z \mathbf{J}^\star= \mathbf{M}_z \mathbf{C} \mathbf{J}^\star.
\end{equation}
Since the linear operator $\mathbf{C} \mathbf{J}^\star $ is unitary and commutes with $\mathbf{M}_z$, then (see \cite[Theorem 3.17, Corollary 3.19]{RR}) it is equal to ${M}_{\mathbf{U}}$, where $\mathbf{U}\in L^\infty(L(\h))$
 and $\mathbf{U}(z)$ is unitary for almost all $z\in\mathbb{T}$.
 Hence $\mathbf{C}={M}_{\mathbf{U}}\mathbf{J}^\star$. Since $\mathbf{C}$ is a conjugation, for $\f\in L^2(\h)$  {a.e. on $\mathbb{T}$} we must have
\begin{equation*}
\begin{split}
\f(z)&=(\mathbf{C}^2\f)(z)=({M}_{\mathbf{U}}\mathbf{J}^\star {M}_{\mathbf{U}}\mathbf{J}^\star \f)(z)\\&=\mathbf{U}(z)J(({M}_{\mathbf{U}}\mathbf{J}^\star \f)(\bar z))=\mathbf{U}(z)J\mathbf{U}(\bar z)J\f(z),
\end{split}
\end{equation*}
 which is equivalent to $J\mathbf{U}(\bar z)J=\mathbf{U}(z)^*$   {a.e. on $\mathbb{T}$. Therefore,} condition (2) follows from Proposition \ref{p2}(2).

 The implication $(2)\Rightarrow (1)$ is easy.
 \end{proof}

 \begin{remark} \label{3.6}
 Let us note that condition (1) in Theorem \ref{8.1} does not depend on the conjugation $J$ in $\h$. Hence if $\mathbf{C}$ satisfies (1), then condition (2) is satisfied for any conjugation $J$  in $\h$. Therefore, for two different conjugations $J$ and $J^\prime$ we obtain two different unitary operator valued functions $\mathbf{U}$ and $\mathbf{U}^\prime$ such that $$\mathbf{C}={M}_{\mathbf{U}}\mathbf{J}^{\star}={M}_{\mathbf{U}^\prime}{\mathbf{J}^\prime}^{\star}.$$
 Thus $\mathbf{U}^\prime(z)=\mathbf{U}(z)V_0$ for almost all $z\in\mathbb{T}$, where $V_0$ is a unitary operator given by $V_0=JJ^\prime$. It follows that if we have, for a given conjugation $\mathbf{C}$, the operator valued function $\mathbf{U}$ determined by some conjugation $J$ in $\h$, then we can easily obtain the function $\mathbf{U}^\prime$ corresponding to any other conjugation $J^\prime$ in $\h$.
  \end{remark}

 Recall (see \cite[Theorem 2.4]{CKLP}) that in the scalar case $C$ commutes with $M_z$ if and only if $CM_{\varphi}=M_{\varphi^{\#}}C$ for all $\varphi\in L^{\infty}$. This is not necessarily true in the general case.
  \begin{remark}
  Let $\mathbf{C}$ be a conjugation in $L^2(\h)$, $\mathbf{C}=M_{\mathbf{U}}\mathbf{J}^\star$ for some $\mathbf{U}\in L^\infty(L(\h))$ such that $\mathbf{U}(z)$ is a unitary operator for almost all $z\in\mathbb{T}$ and $M_{\mathbf{U}}$ is $\mathbf{J}^\star$--symmetric.  Assume that $\mathbf{\Phi}\in L^{\infty}(L(\h))$. Then $$\mathbf{C}{M}_{\mathbf{\Phi}}={M}_{\mathbf{\Phi^{\#}}}\mathbf{C}$$ if and only if
   \begin{equation}\label{e55}\mathbf{U}(z)J \mathbf{\Phi}(\bar z)=\mathbf{\Phi}^{\#}(z)\mathbf{U}(z)J\end{equation}
   for almost  {all $z$ in $\mathbb{T}$. Indeed, this follows from the fact that a.e. on $\mathbb{T}$,}
 \begin{displaymath}
 \begin{split}
 (\mathbf{C}{M}_{\mathbf{\Phi}}\f)(z)&=({M}_{\mathbf{U}}\mathbf{J}^\star {M}_{\mathbf{\Phi}}\f)(z)=\mathbf{U}(z)(\mathbf{J}^\star {M}_{\mathbf{\Phi}}\f)(z)\\
 &=\mathbf{U}(z)J(({M}_{\mathbf{\Phi}}\f)(\bar z))=\mathbf{U}(z)J( \mathbf{\Phi}(\bar z)\f(\bar z))
 \end{split}
 \end{displaymath}
 and
 $$({M}_{\mathbf{\Phi^{\#}}}\mathbf{C}\f)(z)=({M}_{\mathbf{\Phi}^{\#}}{M}_{\mathbf{U}}\mathbf{J}^\star \f)(z)=\mathbf{\Phi}^{\#}(z)\mathbf{U}(z)J( \f(\bar z)).$$
If $\mathbf{\Phi}(z)$ is $J$--symmetric  {for almost all $z\in\mathbb{T}$}, then $J\mathbf{\Phi}(\bar z)=\mathbf{\Phi}^{\#}(z)J$  {a.e. on $\mathbb{T}$} and condition \eqref{e55} holds provided that  {for almost all $z\in\mathbb{T}$,} $\mathbf{\Phi}^{\#}(z)$ commutes with $\mathbf{U}(z)$ .
 \end{remark}

  \begin{example}
 If $\h=\mathbb{C}$ and $J(w)=\bar w$, $w\in\mathbb{C} $, then $U\in L^\infty$ and, by Proposition \ref{p2} (2), $\mathbf{J}^\star$--symmetry of ${U}$ means that ${U}(z)={U}(\bar z)$  {a.e. on $\mathbb{T}$} and we obtain \cite[Theorem 2.4]{CKLP}.
 \end{example}

 \begin{example}\label{ec2}
 Consider $\h=\mathbb{C}^2$ and the conjugation on $\mathbb{C}^2$ defined by $J_1(z_1,z_2)=(\bar z_1,\bar z_2)$. Then, by Theorem \ref{t2}, any $\mathbf{M}_z$--commuting conjugation $\mathbf{C}$ on $L^2(\mathbb{C}^2)$ has a form
 \begin{equation}\label{c22}
  \mathbf{C}= \begin{bmatrix}
    M_{\psi_1}J^\star  & M_{\psi_2}J^\star  \\
    M_{\overline\psi_2^{\#}}J^\star  & M_{\psi_4}J^\star\\
   \end{bmatrix}=\begin{bmatrix}
    M_{\psi_1}  & M_{\psi_2}  \\
    M_{\overline\psi_2^{\#}}  & M_{\psi_4}\\
   \end{bmatrix}\begin{bmatrix}
    J^{\star}  & 0  \\
    0  & J^\star\\
   \end{bmatrix}=M_\mathbf{U}\mathbf{J}^\star_1\end{equation}
   and the functions $\psi_1,\psi_2,\psi_4\in L^\infty$ satisfy conditions \eqref{14}--\eqref{16}.
 Hence in view of Theorem \ref{8.1} the unitary operator valued function is
 $\mathbf{U}=\left[   \begin{BMAT}{cc}{cc}
    {\psi_1}  & {\psi_2}  \\
    {\overline\psi_2^{\#}}  & {\psi_4}\\
   \end{BMAT} \right]$. By conditions \eqref{14}--\eqref{16} we have that $M_\mathbf{U}$ is $\mathbf{J}^\star_1$--symmetric and the operator $\mathbf{U}(z)$ is unitary  {for almost all $z\in\mathbb{T}$}.

 On the other hand, if the conjugation on $\mathbb{C}^2$ is defined by $J_2(z_1,z_2)=(\bar z_2,\bar z_1)$, then $\mathbf{J}^\star_2=\left[   \begin{BMAT}{cc}{cc}
   0& J^{\star}  \\
    J^\star &0  \\
   \end{BMAT} \right]$. The conjugation  $\mathbf{C}$ on $L^2(\mathbb{C}^2)$ has the form
 \begin{align*}
 \mathbf{C}=\begin{bmatrix}
    M_{\psi_1}  & M_{\psi_2}  \\
    M_{\overline\psi_2^{\#}}  & M_{\psi_4}
   \end{bmatrix}\begin{bmatrix}
    J^{\star}  & 0  \\
    0  & J^\star\\
   \end{bmatrix}=&  \begin{bmatrix}
    M_{\psi_1}  & M_{\psi_2}  \\
    M_{\overline\psi_2^{\#}}  & M_{\psi_4}
  \end{bmatrix} \begin{bmatrix}
    0  & 1  \\
    1  & 0\end{bmatrix}
      \begin{bmatrix}
   0& J^{\star}  \\
    J^\star &0  \end{bmatrix}\\ =&  \begin{bmatrix}
    M_{\psi_2}  & M_{\psi_1}  \\
    M_{\psi_4}  & M_{\overline{\psi}_2^{\#}}\\
   \end{bmatrix} \mathbf{J}^\star_2=M_{\mathbf{U}_2}\mathbf{J}^\star_2.
   \end{align*}
By straightforward calculations one can check that $M_{\mathbf{U}_2}$ is $\mathbf{J}^\star_2$--symmetric and $\mathbf{U}_2(z)$ is unitary {for almost all $z\in\mathbb{T}$}.
 \end{example}

The following theorem gives a characterization of all $\mathbf{M}_z$--conjugations in $L^2(\h)$.

\begin{theorem}\label{8.3}
Let $\mathbf{C}$ be an antilinear operator in $L^2(\h)$.
Then the following are equivalent
 \begin{enumerate}
 \item  $\mathbf{C}$ is a conjugation on $L^2(\h)$ such that $\mathbf{M}_z\mathbf{C}=\mathbf{C}\,\mathbf{M}_{\bar z}$,
  \item there is ${\mathbf{C}}_0\in L^\infty(LA(\h))$ such that $\mathbf{C}=A_{{\mathbf{C}_0}}$ and ${\mathbf{C}}_0(z)$ is a conjugation for almost all $z\in\mathbb{T}$,
      \item for any conjugation $J$ in $\h$ there is $\mathbf{U}\in L^\infty(L(\h))$ such that $\mathbf{U}(z)$ is a $J$--symmetric unitary operator for almost all $z\in\mathbb{T}$ and $\mathbf{C}=\widetilde{\mathbf{J}}\,M_{\mathbf{U}}=M_{\mathbf{U}^*}\widetilde{\mathbf{J}}$.
      \end{enumerate}
\end{theorem}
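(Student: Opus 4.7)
My plan is to prove the cycle $(1)\Rightarrow(3)\Rightarrow(2)\Rightarrow(1)$, mirroring the argument used in Theorem \ref{8.1} but now linearizing $\mathbf{C}$ by composing with $\widetilde{\mathbf{J}}$ (which intertwines $\mathbf{M}_z$ and $\mathbf{M}_{\bar z}$) instead of with $\mathbf{J}^\star$. The role of the $\mathbf{J}^\star$--symmetry of $M_{\mathbf{U}}$ that appeared in Theorem \ref{8.1} will here be replaced by $\widetilde{\mathbf{J}}$--symmetry, i.e., $J$--symmetry of $\mathbf{U}(z)$ pointwise, via Proposition \ref{p2}(1).

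For $(1)\Rightarrow(3)$, fix an arbitrary conjugation $J$ on $\h$. From $\mathbf{M}_z\mathbf{C}=\mathbf{C}\mathbf{M}_{\bar z}$, multiplying by $\mathbf{C}$ on both sides and using $\mathbf{C}^2=I$ gives also $\mathbf{C}\mathbf{M}_z=\mathbf{M}_{\bar z}\mathbf{C}$. Combining with Proposition \ref{js}(2), I compute
\[
(\widetilde{\mathbf{J}}\mathbf{C})\mathbf{M}_z=\widetilde{\mathbf{J}}\mathbf{M}_{\bar z}\mathbf{C}=\mathbf{M}_z(\widetilde{\mathbf{J}}\mathbf{C}),
\]
so the linear unitary operator $\widetilde{\mathbf{J}}\mathbf{C}$ commutes with $\mathbf{M}_z$. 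By the vector-valued Wold-type theorem cited from \cite{RR}, this forces $\widetilde{\mathbf{J}}\mathbf{C}=M_{\mathbf{U}}$ for some $\mathbf{U}\in L^\infty(L(\h))$ with $\mathbf{U}(z)$ unitary a.e., hence $\mathbf{C}=\widetilde{\mathbf{J}}M_{\mathbf{U}}$. Imposing $\mathbf{C}^2=I$ and unwinding (as in the proof of Theorem \ref{8.1}) yields, a.e. on $\mathbb{T}$,
\[
\f(z)=(\mathbf{C}^2\f)(z)=J\mathbf{U}(z)J\mathbf{U}(z)\f(z),
\]
which is equivalent to $J\mathbf{U}(z)J=\mathbf{U}(z)^*$ a.e., i.e., $\mathbf{U}(z)$ is $J$--symmetric a.e. By Proposition \ref{p2}(1) this says $M_{\mathbf{U}}$ is $\widetilde{\mathbf{J}}$--symmetric, hence $\widetilde{\mathbf{J}}M_{\mathbf{U}}\widetilde{\mathbf{J}}=M_{\mathbf{U}^*}$, giving the second form $\mathbf{C}=M_{\mathbf{U}^*}\widetilde{\mathbf{J}}$.

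For $(3)\Rightarrow(2)$, given $\mathbf{C}=\widetilde{\mathbf{J}}M_{\mathbf{U}}$ with $\mathbf{U}(z)$ a $J$--symmetric unitary a.e., set $\mathbf{C}_0(z)=J\mathbf{U}(z)$. Then $\mathbf{C}_0(z)\in LA(\h)$, is isometric as a composition of $J$ and a unitary, and
\[
\mathbf{C}_0(z)^2=J\mathbf{U}(z)J\mathbf{U}(z)=\mathbf{U}(z)^*\mathbf{U}(z)=I_{\h}
\]
by the $J$--symmetry of $\mathbf{U}(z)$; hence $\mathbf{C}_0(z)$ is a conjugation on $\h$ a.e. Measurability of $\mathbf{C}_0$ and the bound $\|\mathbf{C}_0\|_\infty=\|\mathbf{U}\|_\infty\le 1$ show $\mathbf{C}_0\in L^\infty(LA(\h))$, and by definition $\mathbf{C}=A_{\mathbf{C}_0}$. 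The implication $(2)\Rightarrow(1)$ is by direct verification: antilinearity, involutivity, and isometry of $A_{\mathbf{C}_0}$ follow pointwise from the corresponding properties of $\mathbf{C}_0(z)$, and antilinearity of $\mathbf{C}_0(z)$ gives
\[
(\mathbf{M}_z A_{\mathbf{C}_0}\f)(z)=z\,\mathbf{C}_0(z)\f(z)=\mathbf{C}_0(z)(\bar z\f(z))=(A_{\mathbf{C}_0}\mathbf{M}_{\bar z}\f)(z)
\]
a.e. on $\mathbb{T}$, closing the loop.

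The only delicate step is $(1)\Rightarrow(3)$, where I need to invoke the vector-valued representation of shift-commuting unitaries and then read off the involution condition as pointwise $J$--symmetry; the rest is mostly pointwise calculation. A small bookkeeping point is to justify that $\mathbf{C}_0\in L^\infty(LA(\h))$ (measurability of an antilinear operator valued function), but this is immediate from the structural identity $\mathbf{C}_0(z)=J\mathbf{U}(z)$ with $\mathbf{U}\in L^\infty(L(\h))$. Finally, Remark \ref{3.6} applies verbatim to explain the phrase ``for any conjugation $J$'' in (3): different choices of $J$ produce different $\mathbf{U}$'s related by a fixed constant unitary.
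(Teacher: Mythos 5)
Your proof is correct and follows essentially the same route as the paper's: linearize $\mathbf{C}$ by composing with $\widetilde{\mathbf{J}}$, invoke the Radjavi--Rosenthal representation of shift-commuting unitaries to get $M_{\mathbf{U}}$, and read the involution condition $\mathbf{C}^2=I$ as pointwise $J$--symmetry of $\mathbf{U}(z)$, with $(3)\Rightarrow(2)$ given by the pointwise conjugation $J\mathbf{U}(z)$ and $(2)\Rightarrow(1)$ by direct verification. The only (cosmetic) difference is that you compose with $\widetilde{\mathbf{J}}$ on the left, obtaining $\mathbf{C}=\widetilde{\mathbf{J}}M_{\mathbf{U}}$ exactly in the form stated in (3), whereas the paper composes on the right and gets $\mathbf{C}=M_{\mathbf{U}}\widetilde{\mathbf{J}}$ with the adjoint translation left implicit.
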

\begin{proof}
First we will prove that $(1)\Rightarrow(3)$. By Proposition \ref{js}~(2) we have
\begin{equation}\label{eq82}
 \mathbf{M}_z\mathbf{C}\,\widetilde{\mathbf{J}}=\mathbf{C}\,\mathbf{M}_{\bar z}\, \widetilde{\mathbf{J}}=\mathbf{C}\,\widetilde{\mathbf{J}}\,\mathbf{M}_z,
\end{equation}
which implies that the unitary operator $\mathbf{C}\widetilde{\mathbf{J}}$ commutes with $\mathbf{M}_z$. Hence, as in the proof of Theorem \ref{8.1}, $\mathbf{C}\widetilde{\mathbf{J}}=M_{\mathbf{U}}$, for $\mathbf{U}\in L^\infty(L(\h))$ such that $\mathbf{U}(z)$ is unitary for almost all $z\in\mathbb{T}$. Then $\mathbf{C}=M_{\mathbf{U}}\widetilde{\mathbf{J}}$. Since $\mathbf{C}$ is a conjugation,  {for almost all $z\in\mathbb{T}$} we must have
\begin{displaymath}
\begin{split}
\f(z)&=(\mathbf{C}^2\,\f)(z)=(M_{\mathbf{U}}\,\widetilde{\mathbf{J}}\,M_{\mathbf{U}}\,\widetilde{\mathbf{J}}\,\f)(z)\\
&=\mathbf{U}(z)J((M_{\mathbf{U}}\,\widetilde{\mathbf{J}}\,\f)(z))=\mathbf{U}(z)J\,\mathbf{U}(z)J\f(z),
\end{split}
\end{displaymath}
which is equivalent to $J\mathbf{U}(z)J=\mathbf{U}(z)^*$
 {a.e. on $\mathbb{T}$}.

To see that $(3)\Rightarrow(2)$ define ${\mathbf{C}}_0\in L^\infty(LA(\h))$ as ${\mathbf{C}}_0(z)=\mathbf{U}(z)J$  {a.e. on $\mathbb{T}$}. An easy calculation shows that $(2)\Rightarrow(1)$.
\end{proof}
\begin{remark}\label{rem49} As in Theorem \ref{8.1}, condition (1) in Theorem \ref{8.3} does not depend on $J$. Hence (1) implies that for any conjugations $J$ and $J^\prime$ in $\h$ there exist unitary operator valued functions $\mathbf{U}$ and $\mathbf{U}^\prime$ such that
\[\mathbf{C}=M_{\mathbf{U}}\widetilde{\mathbf{J}}= M_{\mathbf{U}^\prime}{\widetilde{\mathbf{J}}}^\prime. \]
Moreover, $\mathbf{U}^\prime(z)=\mathbf{U}(z)J J^\prime $  a.e. on $\mathbb{T}$.
\end{remark}

\begin{remark}\label{rz}
Let $\mathbf{C}=A_{\mathbf{C}_0}$ be an $\mathbf{M}_z$--conjugation in $L^2(\h)$.  Suppose that $\mathbf{F}\in L^\infty(L(\h))$ is an operator valued function. Then $M_\mathbf{F}$ is $\mathbf{C}$--symmetric if and only if  {for almost all $z\in\mathbb{T}$,} $\mathbf{F}(z)$ is $\mathbf{C}_0(z)$--symmetric.
\end{remark}

\begin{example}\label{ec3}
 Consider $\h=\mathbb{C}^2$ and the conjugation $J_1$ in $\mathbb{C}^2$ defined by $J_1(z_1,z_2)=(\bar z_1,\bar z_2)$. Then by Theorem \ref{t3} any $\mathbf{M}_z$--conjugation $\mathbf{C}_1$ on $L^2(\mathbb{C}^2)$ has a form
 \begin{equation}\label{c222}
  \mathbf{C}_1=\left[   \begin{BMAT}{cc}{cc}
    M_{\psi_1}\tilde{J}  & M_{\psi_2}\tilde{J}  \\
    M_{\psi_2}\tilde{J}  & M_{\psi_4}\tilde{J}\\
   \end{BMAT} \right]=\left[   \begin{BMAT}{cc}{cc}
    M_{\psi_1}  & M_{\psi_2}  \\
    M_{\psi_2}  & M_{\psi_4}\\
   \end{BMAT} \right]\left[   \begin{BMAT}{cc}{cc}
    \tilde{J}  & 0  \\
    0  & \tilde{J}\\
   \end{BMAT} \right]=M_{\mathbf{U}}\widetilde{\mathbf{J}}_1\end{equation}
   and the functions $\psi_1,\psi_2,\psi_4\in L^\infty$ satisfy conditions \eqref{18}--\eqref{19}.
 Hence in view of Theorem \ref{8.3}, the unitary operator valued function is
 $$\mathbf{U}=\left[   \begin{BMAT}{cc}{cc}
    {\psi_1}  & {\psi_2}  \\
    {\psi_2}  & {\psi_4}\\
   \end{BMAT} \right].$$

  If we consider another conjugation in $\mathbb{C}^2$ given by $J_2(z_1,z_2)=(\bar z_2,\bar z_1)$, then 
the conjugation  $\mathbf{C}_2$ in $L^2(\mathbb{C}^2)$ has a form
 \begin{align*}
 \mathbf{C}_2=
\left[   \begin{BMAT}{cc}{cc}
    M_{\psi_2}  & M_{\psi_1}  \\
    M_{\psi_4}  & M_{{\psi}_2}\\
   \end{BMAT} \right]\left[   \begin{BMAT}{cc}{cc}
   0& \tilde{J}  \\
    \tilde{J} &0  \\
   \end{BMAT} \right]=M_{\mathbf{U}_2}\widetilde{\mathbf{J}}_2.
   \end{align*}
 \end{example}

\section{Conjugations preserving $H^2(\h)$}

Let $J$ be any conjugation in $\h$. Then {, as noted before, by \eqref{jdef} we have} $\mathbf{J}^\star H^2(\h)\subset H^2(\h)$. Now our aim is to characterize all $\mathbf{M}_z$--commuting conjugations with this property.  {Observe here that if $\mathbf{C}$ is a conjugation in $L^2(\h)$ such that $\mathbf{C}( H^2(\h))\subset H^2(\h)$, then actually we must have $\mathbf{C}(H^2(\h))= H^2(\h)$.}

\begin{theorem}\label{8.2}
Let $J$ be a conjugation in $\h$ and let $\mathbf{C}$ be a conjugation in $L^2(\h)$ such that $\mathbf{C}\mathbf{M}_z=\mathbf{M}_z\mathbf{C}$. If $\mathbf{C} (H^2(\h))\subset H^2(\h)$, then there is a unitary $J$--symmetric operator  $U_0\in L(\h)$ such that $\mathbf{C}={M}_{\mathbf{U}} \mathbf{J}^{\star}= \mathbf{J}^{\star}{M}_{\mathbf{U}^*}$, where $\mathbf{U}$ is a constant operator valued function $\mathbf{U}(z)=U_0$ for almost all $z\in\mathbb{T}$.
\end{theorem}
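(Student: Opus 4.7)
The plan is to apply Theorem~\ref{8.1} and then use the assumed $H^2(\h)$--invariance to force the multiplier to be constant.

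First, by Theorem~\ref{8.1} there exists $\mathbf{U}\in L^\infty(L(\h))$ with $\mathbf{U}(z)$ unitary a.e.\ on $\mathbb{T}$ and $M_\mathbf{U}$ being $\mathbf{J}^\star$--symmetric (equivalently, $J\mathbf{U}(z)J=\mathbf{U}(\bar z)^*$ a.e., by Proposition~\ref{p2}(2)) such that
\[\mathbf{C}=M_\mathbf{U}\mathbf{J}^\star=\mathbf{J}^\star M_{\mathbf{U}^*}.\]
Since $\mathbf{J}^\star H^2(\h)=H^2(\h)$ (by \eqref{jdef}) and $\mathbf{C}$ is a conjugation mapping $H^2(\h)$ into, hence onto, itself, the first factorization yields $M_\mathbf{U}(H^2(\h))\subset H^2(\h)$, while applying $\mathbf{J}^\star$ to the second yields $M_{\mathbf{U}^*}(H^2(\h))\subset H^2(\h)$.

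The core step is then to show that these two simultaneous invariance conditions force $\mathbf{U}$ to be essentially constant. Fix $x,y\in\h$ and set $\varphi(z):=\langle \mathbf{U}(z)x,y\rangle\in L^\infty\subset L^2$. Viewing $x$ as a constant element of $H^2(\h)$, we have $\mathbf{U}(\cdot)x=M_\mathbf{U}x\in H^2(\h)$, so by \eqref{fourier} all Fourier coefficients of $\varphi$ with negative index vanish; that is, $\varphi\in H^2$. The same argument applied to $M_{\mathbf{U}^*}$ and the constant $y$ gives $\mathbf{U}^*(\cdot)y\in H^2(\h)$, hence
\[\overline{\varphi(z)}=\overline{\langle \mathbf{U}(z)x,y\rangle}=\langle \mathbf{U}(z)^*y,x\rangle\in H^2,\]
so $\varphi\in\overline{H^2}$. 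Therefore $\varphi\in H^2\cap\overline{H^2}$, which forces $\varphi$ to be a.e.\ equal to a constant. Since $x,y\in\h$ were arbitrary, a weak-operator argument produces $U_0\in L(\h)$ with $\mathbf{U}(z)=U_0$ a.e.\ on $\mathbb{T}$.

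Finally, $U_0$ is unitary as the a.e.\ value of $\mathbf{U}(z)$, and specializing $J\mathbf{U}(z)J=\mathbf{U}(\bar z)^*$ to the constant value $U_0$ gives $JU_0J=U_0^*$, i.e.\ $U_0$ is $J$--symmetric, and one reads off $\mathbf{C}=M_\mathbf{U}\mathbf{J}^\star=\mathbf{J}^\star M_{\mathbf{U}^*}$ with this constant $\mathbf{U}$. The only delicate point in the plan is step two --- extracting constancy from the two one-sided invariance conditions --- but once the scalar slices $\varphi$ are seen to lie in $H^2\cap\overline{H^2}$ the conclusion is immediate.
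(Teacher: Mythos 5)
Your proof is correct, and it reaches the constancy of $\mathbf{U}$ by a genuinely different route than the paper. The opening is identical: Theorem~\ref{8.1} gives $\mathbf{C}=M_{\mathbf{U}}\mathbf{J}^\star=\mathbf{J}^\star M_{\mathbf{U}^*}$, and since $\mathbf{J}^\star$ preserves $H^2(\h)$, the hypothesis forces both $M_{\mathbf{U}}$ and $M_{\mathbf{U}^*}$ to leave $H^2(\h)$ invariant. At the key step the paper argues with subspaces: commutativity with $\mathbf{M}_z$ gives $M_{\mathbf{U}}(z^kH^2(\h))\subset z^kH^2(\h)$ and likewise for $M_{\mathbf{U}^*}$, so every $z^kH^2(\h)$ reduces $M_{\mathbf{U}}$; hence the wandering subspace $\h=H^2(\h)\ominus zH^2(\h)$ reduces $M_{\mathbf{U}}$, and $M_{\mathbf{U}}\h\subset\h$ says directly that $\mathbf{U}$ is constant. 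You instead pass to scalar slices $\varphi(z)=\langle\mathbf{U}(z)x,y\rangle$, note via \eqref{fourier} that $\varphi\in H^2$ and $\overline{\varphi}\in H^2$, and conclude $\varphi\in H^2\cap\overline{H^2}=\mathbb{C}$, then reassemble the operator by a weak-operator/density argument. Both proofs exploit exactly the same two-sided analyticity of $\mathbf{U}$; the paper's version is coordinate-free and hands you $U_0$ as the restriction $M_{\mathbf{U}}|_{\h}$ with no further work, while yours is more elementary but requires the (routine, thanks to separability of $\h$) care of uniting the a.e.\ exceptional null sets over a countable dense family of pairs $(x,y)$ before you may write $\mathbf{U}(z)=U_0$ a.e. The endgame --- unitarity of $U_0$ and $J$--symmetry from $JU_0J=U_0^*$ via Proposition~\ref{p2}(2) --- coincides in both.
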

\begin{proof}
By Theorem \ref{8.1},  {there exists a unitary valued $\mathbf{U}\in L^{\infty}(L(\h))$ such that} $\mathbf{C}= M_{\mathbf{U}}\mathbf{J}^\star$ and $M_{\mathbf{U}}\mathbf{M}_z=\mathbf{M}_zM_{\mathbf{U}}$.  {Since, by assumption, $ M_{\mathbf{U}}\mathbf{J}^\star(H^2(\h))\subset H^2(\h)$, we have} $M_{\mathbf{U}} H^2(\h)\subset H^2(\h)$. {By the commutativity relation $${M}_\mathbf{U} \mathbf{M}_{z^k}=\mathbf{M}_{z^k}{M}_\mathbf{U}$$
we get that $$M_{\mathbf{U}} (z^k H^2(\h))\subset z^kH^2(\h)$$
 for every nonnegative integer $k$.
Since $\mathbf{C}=\mathbf{C}^{\sharp}=\mathbf{J}^\star M_{\mathbf{U}^*}$, similarly,
$$M_{\mathbf{U}^*}( z^k H^2(\h))\subset z^k H^2(\h),$$ which implies that all subspaces
$z^k H^2(\h)$, $k=0,1,\dots$, are reducing for $M_{\mathbf{U}}$. Hence also $H^2(\h)\ominus zH^2(\h)$ is reducing for $M_{\mathbf{U}}$.}
 Therefore $$M_{\mathbf{U}}\h\subset\h,$$ which means that $\mathbf{U}$ is a constant operator valued function, $\mathbf{U}(z)=U_0$ for almost all $z\in\mathbb{T}$, where $U_0 {\in L(\h)}$ is a unitary operator. Since ${M}_{\mathbf{U}}$ is $\mathbf{J}^{\star}$--symmetric, we get that $U_0$ is $J$--symmetric.
\end{proof}

\begin{remark}
If $\h=\mathbb{C}$ and $J(w)=\bar w$, $w\in\mathbb{C} $, then $U_0$ is a constant of modulus $1$ (see \cite[Corollary 3.1]{CKLP}).
\end{remark}

\begin{example}
As in Example \ref{ec2} let $\h=\mathbb{C}^2$ and $J_1(z_1,z_2)=(\bar z_1,\bar z_2)$. Then, by Theorem \ref{8.2}, Example \ref{ec2} and conditions \eqref{14}--\eqref{16}, an antilinear operator
 $\mathbf{C}=\left[   \begin{BMAT}{cc}{cc}
     D_1 & D_2 \\
     D^\sharp_2 & D_4\\
   \end{BMAT} \right]$, $i=1,2,4$, is a conjugation commuting with $\mathbf{M}_z$ and $\mathbf{C}(H^2(\mathbb{C}^2))\subset H^2(\mathbb{C}^2)$ if and only if
   $D_i=\lambda_i J^\star$ with $\lambda_i\in\mathbb{D}$ such that
   \begin{align}
   |\lambda_1|^2+|\lambda_2|^2=|\lambda_2|^2+|\lambda_4|^2=1\\ \bar\lambda_1\lambda_2+\bar\lambda_2\lambda_4=0.
   \end{align}
\end{example}

\begin{remark}
Note that if $\mathbf{C}$ is an $\mathbf{M}_z$--commuting conjugation such that $\mathbf{C} (H^2(\h))\subset H^2(\h)$, then for every conjugation $J$ in $\h$ there is a unitary $J$--symmetric operator  $U_0\in L(\h)$ such that $\mathbf{C}=\mathbf{M}_{\mathbf{U}} \mathbf{J}^{\star}$, where $\mathbf{U}$ is a constant operator valued function $\mathbf{U}(z)=U_0$ for almost all $z\in\mathbb{T}$. In view of Remark \ref{3.6} the relation between the values of constant operator valued functions corresponding to conjugations $J_1$ and $J_2$ is given by $U_2=U_1 J_1J_2$.
\end{remark}

Considering $\mathbf{M}_z$--conjugations preserving $H^2(\h)$ note  that $\widetilde{\mathbf{J}}(H^2(\h))\not\subset H^2(\h)$. In fact, $\widetilde{\mathbf{J}}(H^2(\h))= L^2(\h)\ominus zH^2(\h)$. More generally, we have:

\begin{proposition}\label{5.5}
There are no $\mathbf{M}_z$--conjugations on $L^2(\h)$ for which $H^2(\h)$ is invariant.
\end{proposition}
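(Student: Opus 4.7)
The plan is to combine the structural description from Theorem \ref{8.3} with an explicit computation of $\widetilde{\mathbf{J}}(H^2(\h))$, and then derive a contradiction by testing $M_{\mathbf{U}}$ on negatively indexed Fourier modes. Fix any conjugation $J$ in $\h$. By Theorem \ref{8.3}, an $\mathbf{M}_z$--conjugation must have the form $\mathbf{C}=M_{\mathbf{U}}\widetilde{\mathbf{J}}$ for some $\mathbf{U}\in L^\infty(L(\h))$ whose values are unitary (and $J$--symmetric) a.e.\ on $\mathbb{T}$. So the question reduces to whether there exists such a $\mathbf{U}$ with $M_{\mathbf{U}}\widetilde{\mathbf{J}}(H^2(\h))\subset H^2(\h)$.

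First I would use the Fourier formula \eqref{jdef} to identify $\widetilde{\mathbf{J}}(H^2(\h))$. If $\f=\sum_{n\ge 0}x_n e_n\in H^2(\h)$, then $\widetilde{\mathbf{J}}\f=\sum_{n\le 0}J(x_{-n})e_n$, so
\[
\widetilde{\mathbf{J}}(H^2(\h))=L^2(\h)\ominus zH^2(\h)=\overline{\mathrm{span}}\{e_{-k}x\,:\,k\ge 0,\ x\in\h\}.
\]
Assuming $\mathbf{C}(H^2(\h))\subset H^2(\h)$ therefore forces
\[
M_{\mathbf{U}}(e_{-k}x)=\overline{z}^{\,k}\,\mathbf{U}(z)x\in H^2(\h)\qquad\text{for every }k\ge 0,\ x\in\h.
\]

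Next I would exploit this for all $k$ simultaneously. Write the Fourier expansion $\mathbf{U}(\cdot)x=\sum_{n\in\mathbb{Z}}y_n(x)e_n\in L^2(\h)$ (valid since $\mathbf{U}\in L^\infty(L(\h))$). The condition $\overline{z}^{\,k}\mathbf{U}(\cdot)x\in H^2(\h)$ is equivalent to $y_n(x)=0$ for all $n<k$. Taking $k\to\infty$ gives $y_n(x)=0$ for every $n\in\mathbb{Z}$, so $\mathbf{U}(z)x=0$ a.e. Since this holds for every $x\in\h$, we conclude $\mathbf{U}(z)=0$ a.e., contradicting the fact that $\mathbf{U}(z)$ is unitary for almost every $z\in\mathbb{T}$.

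There is no real obstacle here beyond setting up the right test vectors; the crux is simply recognizing that $\widetilde{\mathbf{J}}$ reflects Fourier indices and sends $H^2(\h)$ onto $L^2(\h)\ominus zH^2(\h)$, so the invariance hypothesis is already incompatible with any multiplier, let alone a unitary one. This also explains why the $\mathbf{M}_z$--commuting case (Theorem \ref{8.2}) admits many such conjugations while the $\mathbf{M}_z$--conjugation case admits none.
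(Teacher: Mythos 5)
Your argument is correct and follows essentially the same route as the paper: both reduce via Theorem \ref{8.3} to $\mathbf{C}=M_{\mathbf{U}}\widetilde{\mathbf{J}}$ with $\mathbf{U}$ unitary valued, use that $\widetilde{\mathbf{J}}$ carries $H^2(\h)$ onto $L^2(\h)\ominus zH^2(\h)$, and then let the exponent $k$ run to infinity to force $\mathbf{U}=0$, contradicting unitarity. Your phrasing via vanishing Fourier coefficients of $\mathbf{U}(\cdot)x$ is just the coefficient-level version of the paper's orthogonality statement $M_{\mathbf{U}}\f\perp\bar z^{k-1}H^2(\h)$.
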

\begin{proof}
Assume that $\mathbf{C}$ is an $\mathbf{M}_z$--conjugation and $\mathbf{C} H^2(\h)\subset H^2(\h)$. Then $\mathbf{C} H^2(\h)= H^2(\h)$. By Theorem \ref{8.3}(3),
$\mathbf{C}=M_{\mathbf{U}}\widetilde{\mathbf{J}}$, hence $$M_{\mathbf{U}}H^2(\h)=\mathbf{C}\widetilde{\mathbf{J}}H^2(\h)=L^2(\h)\ominus zH^2(\h).$$

{Let $\f\in H^2(\h)$. Then also $z^k\f\in H^2(\h)$ for any $k=1,2,\dots$. Since $\mathbf{U}$ is an operator valued function, $M_{\mathbf{U}}$  commutes with $\mathbf{M}_z$ and ${\mathbf{M}}_{\bar z}$. Hence  $M_{\mathbf{U}}z^k\f=z^kM_{\mathbf{U}}\f\perp zH^2(\h)$   and     $M_{\mathbf{U}}\f\perp \bar z^{k-1}H^2(\h)$. Since $k$ is arbitrary, then $M_{\mathbf{U}}\f=0$, so we also have $M_{\mathbf{U}}=0$, which is a contradiction.}
%

\end{proof}

\section{Conjugations and model spaces}
Assume now that $\dim \h=d<\infty$. A function $\Theta\in H^2(L(\h))$ is called  {\it inner}, if its boundary values are unitary operators in $L^2(\h)$ almost everywhere on $\mathbb{T}$ (since $\dim\h<\infty$).
Suppose that $\Theta$ is a {\it pure} inner function, i.e., $\|\Theta(0)\|<1$. Define the corresponding {\it model space} $K_\Theta=H^2(\h)\ominus \Theta H^2(\h)$ and let $P_\Theta$ be the orthogonal projection on $\Kt$. Note that the subspace  $K^\infty_\Theta$  of all bounded functions in $K_\Theta$ ($K_\Theta^\infty=K_\Theta \cap L^\infty(\h)$) is dense in $K_\Theta$.

\begin{lemma}\label{lemat61}
	Let $\mathbf{F}\in L^{\infty}(L(\h))$ and let $J_1$, $J_2$ be any two conjugations in $\h$. Then $\widetilde{\mathbf{F}}$ defined  {a.e. on $\mathbb{T}$} by $\widetilde{\mathbf{F}}(z)=J_1\mathbf{F}(z)J_2$ belongs to $H^{\infty}(L(\h))$ if and only if $\mathbf{F}^{*}$ belongs to $H^{\infty}(L(\h))$.
\end{lemma}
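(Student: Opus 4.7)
The strategy is to compare Fourier coefficients. Since $\dim\h<\infty$, the discussion at the end of Section 3 provides a Fourier expansion $\mathbf{F}=\sum_{n\in\mathbb{Z}}F_n e_n$ in $L^2(L(\h))$, and a function $\mathbf{G}\in L^{\infty}(L(\h))$ lies in $H^{\infty}(L(\h))$ precisely when all its Fourier coefficients with strictly negative index vanish. My plan is to show that both $\widetilde{\mathbf{F}}\in H^{\infty}(L(\h))$ and $\mathbf{F}^{*}\in H^{\infty}(L(\h))$ translate to the same vanishing condition $F_m=0$ for all $m>0$.

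First I would dispose of $\mathbf{F}^{*}$: taking pointwise adjoints in $\mathbf{F}(z)=\sum_n F_n z^n$ gives $\mathbf{F}^{*}(z)=\sum_n F_n^* \bar z^n=\sum_m F_{-m}^* z^m$, so the $m$-th Fourier coefficient of $\mathbf{F}^{*}$ is $F_{-m}^*$. Hence $\mathbf{F}^{*} \in H^{\infty}(L(\h))$ if and only if $F_m=0$ for all $m>0$.

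Next I would compute the Fourier coefficients of $\widetilde{\mathbf{F}}$ by exploiting the identity $\langle Jw,y\rangle=\overline{\langle w,Jy\rangle}$, valid for any conjugation $J$. For $x,y\in\h$ and $n\in\mathbb{Z}$,
\begin{align*}
\langle \widetilde{F}_n x,y\rangle &= \int_{\mathbb{T}}\langle J_1\mathbf{F}(z)J_2x,y\rangle\,\bar z^n\, dm(z) \\
&= \overline{\int_{\mathbb{T}}\langle \mathbf{F}(z)J_2x,J_1y\rangle\, z^n\, dm(z)} \\
&= \overline{\langle F_{-n}J_2x,J_1y\rangle}=\langle J_1 F_{-n}J_2x,y\rangle,
\end{align*}
where the second equality pulls $J_1$ outside the inner product (turning $\bar z^n$ into $z^n$ under complex conjugation of the integral), the third recognises the Fourier coefficient formula $\int\langle\mathbf{F}(z)u,v\rangle z^n\,dm(z)=\langle F_{-n}u,v\rangle$, and the fourth applies $\langle Jw,y\rangle=\overline{\langle w,Jy\rangle}$ in reverse. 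Hence $\widetilde{F}_n=J_1 F_{-n} J_2$. Since $J_1$ and $J_2$ are bijections, $\widetilde{F}_n=0$ if and only if $F_{-n}=0$, so $\widetilde{\mathbf{F}}\in H^{\infty}(L(\h))$ if and only if $F_m=0$ for all $m>0$, which matches the condition derived for $\mathbf{F}^{*}$.

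The delicate step is the calculation of $\widetilde{F}_n$: one must carefully track how the antilinearity of $J_1$ and $J_2$ interacts with the complex conjugation of the scalar $\bar z^n$ that defines Fourier coefficients. Once these antilinear twists are unwound, the two $H^{\infty}$-membership conditions reduce to the same statement about $F_m$ for $m>0$, and the lemma follows.
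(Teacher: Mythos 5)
Your proposal is correct and follows essentially the same route as the paper: both proofs compute the Fourier coefficients of $\widetilde{\mathbf{F}}$ and of $\mathbf{F}^{*}$ in terms of those of $\mathbf{F}$, obtaining $\widetilde{F}_n=J_1F_{-n}J_2$ and $(\mathbf{F}^{*})_n=(F_{-n})^{*}$, so that both membership conditions reduce to $F_m=0$ for $m\geqslant 1$. The only cosmetic difference is that you extract the coefficients by testing against vectors $x,y\in\h$, whereas the paper tests against operators $A\in L(\h)$ in the Hilbert--Schmidt inner product; these agree (e.g.\ via rank-one test operators), so the arguments are the same in substance.
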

\begin{proof}
 {Recall that for two Hilbert--Schmidt operators $A,B\in L(\h)$ their Hilbert--Schmidt inner product is given by
	$$\langle A,B\rangle=\tr(B^*A)=\sum_{e\in \mathcal{E}}\langle B^*Ae,e\rangle=\sum_{e\in \mathcal{E}}\langle Ae,Be\rangle,$$
	where $\mathcal{E}$ is an arbitrary orthonormal basis for $\h$ (here finite). Therefore, in our case for each $A,B\in L(\h)$,
	\begin{displaymath}
	\begin{split}
	\langle J_1AJ_2,B\rangle&=\sum_{e\in \mathcal{E}}\langle J_1AJ_2e,Be\rangle=\sum_{e\in \mathcal{E}}\langle J_1Be,AJ_2e\rangle\\
	&=\sum_{e'\in J_2(\mathcal{E})}\langle J_1BJ_2e',Ae'\rangle=\langle J_1BJ_2,A\rangle.
	\end{split}
	\end{displaymath}
It follows that if $\mathbf{F}\in L^{\infty}(L(\h))$ is given by \eqref{jeden}, then
\begin{equation}\label{zero}
\widetilde{\mathbf{F}}=\sum_{n=-\infty}^{\infty}J_1{F}_{-n}J_2\, e_n.
\end{equation}
Indeed, if $\displaystyle{\widetilde{\mathbf{F}}=\sum\limits_{n=-\infty}^{\infty}\widetilde{F}_n e_n}$, then for each $n\in\mathbb{Z}$ and $A\in L(\h)$,
\begin{displaymath}
\begin{split}
\langle \widetilde{F}_n,A\rangle&=\int_{\mathbb{T}}\langle \widetilde{\mathbf{F}}(z),A\rangle \overline{z}^n dm(z)=\int_{\mathbb{T}}\langle J_1\mathbf{F}(z)J_2,A\rangle \overline{z}^n dm(z)\\
&=\int_{\mathbb{T}}\langle J_1AJ_2,\mathbf{F}(z)\rangle \overline{z}^n dm(z)=\overline{\int_{\mathbb{T}}\langle \mathbf{F}(z),J_1AJ_2\rangle {z}^n dm(z)}\\
&=\overline{\langle F_{-n},J_1AJ_2\rangle}=\langle J_1F_{-n}J_2,A\rangle.
\end{split}
\end{displaymath}
On the other hand, if $\mathbf{F}\in L^{\infty}(L(\h))$ is given by \eqref{jeden}, then
\begin{equation}\label{zeroro}
{\mathbf{F}}^*=\sum_{n=-\infty}^{\infty}({F}_{-n})^*e_n.
\end{equation}
Indeed, if $\displaystyle{\mathbf{F}^*=\sum\limits_{n=-\infty}^{\infty}G_n e_n }$, then, using the fact that for Hilbert--Schmidt operators $A$ and $B$, $$\langle A,B\rangle=\tr(B^*A)=\overline{\tr(BA^*)}=\overline{\langle A^*,B^*\rangle}$$ (see \cite[p. 90]{conway2}), we get
\begin{displaymath}
\begin{split}
\langle G_n,A\rangle&=\int_{\mathbb{T}}\langle {\mathbf{F}}(z)^*,A\rangle \overline{z}^n dm(z)=\int_{\mathbb{T}} \overline{\langle {\mathbf{F}}(z),A^*\rangle} \overline{z}^n dm(z)\\
&=\overline{\langle F_{-n},A^*\rangle}=\langle (F_{-n})^*,A\rangle
\end{split}
\end{displaymath}
for all $A\in L(\h)$ and $n\in\mathbb{Z}$. By \eqref{zero}, $\widetilde{\mathbf{F}}\in H^{\infty}(L(\h))$ if and only if $F_n=0$ for all $n\geqslant 1$, which by \eqref{zeroro}, is equivalent to $\mathbf{F}^{*}\in H^{\infty}(L(\h))$.}
\end{proof}

Note that for $\mathbf{F}\in L^{\infty}(L(\h))$ we have that $\mathbf{F}\in H^{\infty}(L(\h))$ if and only if $M_{\mathbf{F}}(H^2(\h))\subset H^2(\h)$.

\begin{lemma}[see \cite{berc}, pp. 118-119]\label{lemat62}
	Let $\Theta\in H^{\infty}(L(\h))$ be an operator valued inner function and let $\mathbf{F}\in L^{\infty}(L(\h))$. If $M_{\mathbf{F}}(H^2(\h))\subset \Theta H^2(\h)$, then $\mathbf{F}\in H^{\infty}(L(\h))$ and there exists $\Psi\in H^{\infty}(L(\h))$ such that $\mathbf{F}=\Theta \Psi$.
\end{lemma}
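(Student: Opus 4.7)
The plan is to use the fact that $\Theta$ is inner (hence $\Theta(z)$ is unitary a.e.\ on $\mathbb{T}$) to define the candidate $\Psi$ pointwise as $\Psi(z)=\Theta(z)^{*}\mathbf{F}(z)$, and then verify that this multiplier maps $H^{2}(\h)$ into itself.

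First I would establish that $\mathbf{F}\in H^{\infty}(L(\h))$. This is immediate: since $\Theta\in H^{\infty}(L(\h))$, we have $\Theta H^{2}(\h)\subset H^{2}(\h)$, so the hypothesis $M_{\mathbf{F}}(H^{2}(\h))\subset\Theta H^{2}(\h)$ yields $M_{\mathbf{F}}(H^{2}(\h))\subset H^{2}(\h)$. By the observation made just before the lemma, this forces $\mathbf{F}\in H^{\infty}(L(\h))$.

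Next I would define $\Psi\in L^{\infty}(L(\h))$ by setting
\[
\Psi(z)=\Theta(z)^{*}\mathbf{F}(z)\quad\text{a.e.\ on }\mathbb{T}.
\]
Since $\Theta(z)$ is unitary a.e., we have $\|\Psi(z)\|\leq\|\Theta(z)^{*}\|\,\|\mathbf{F}(z)\|=\|\mathbf{F}(z)\|$, so $\Psi\in L^{\infty}(L(\h))$, and the pointwise relation $\Theta(z)\Psi(z)=\Theta(z)\Theta(z)^{*}\mathbf{F}(z)=\mathbf{F}(z)$ gives $\Theta\Psi=\mathbf{F}$ a.e., i.e.\ $M_{\mathbf{F}}=M_{\Theta}M_{\Psi}$.

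It remains to show $\Psi\in H^{\infty}(L(\h))$, which by the characterization recalled before the lemma is equivalent to $M_{\Psi}(H^{2}(\h))\subset H^{2}(\h)$. Fix $\g\in H^{2}(\h)$. By hypothesis there exists $\mathbf{h}\in H^{2}(\h)$ such that $M_{\mathbf{F}}\g=\Theta\mathbf{h}$, which means $\mathbf{F}(z)\g(z)=\Theta(z)\mathbf{h}(z)$ for a.e.\ $z\in\mathbb{T}$. Applying $\Theta(z)^{*}$ on the left of both sides and using unitarity of $\Theta(z)$, we obtain
\[
\Psi(z)\g(z)=\Theta(z)^{*}\mathbf{F}(z)\g(z)=\Theta(z)^{*}\Theta(z)\mathbf{h}(z)=\mathbf{h}(z)\quad\text{a.e.},
\]
so $M_{\Psi}\g=\mathbf{h}\in H^{2}(\h)$. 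Hence $M_{\Psi}(H^{2}(\h))\subset H^{2}(\h)$ and $\Psi\in H^{\infty}(L(\h))$.

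The only subtle point is the legitimacy of the pointwise inversion of $\Theta$: this works precisely because $\Theta$ being inner guarantees that $\Theta(z)^{*}$ exists (as a unitary operator) for a.e.\ $z\in\mathbb{T}$, even though $\Theta^{*}$ itself does not belong to $H^{\infty}(L(\h))$. Everything else is routine.
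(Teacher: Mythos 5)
Your proof is correct, but it takes a genuinely different route from the paper's. The paper does not invert $\Theta$ pointwise: it defines a linear map $T\colon H^2(\h)\to H^2(\h)$ by sending $\f$ to the unique $\g$ with $M_{\mathbf F}\f=M_\Theta\g$ (uniqueness and boundedness coming from $M_\Theta$ being an isometry on $L^2(\h)$), observes that $T$ commutes with $\mathbf{M}_z$, and then invokes the characterization of the commutant of the shift on $H^2(\h)$ (\cite[Chap.\ 5, Theorem 1.7]{berc}) to conclude $T=M_\Psi$ with $\Psi\in H^{\infty}(L(\h))$; the memberships $\mathbf F=\Theta\Psi\in H^\infty(L(\h))$ then fall out at the end rather than being established first. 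Your argument instead constructs $\Psi=\Theta^{*}\mathbf F$ directly and verifies $M_\Psi(H^2(\h))\subset H^2(\h)$ by a pointwise computation, which is more elementary and avoids the commutant theorem entirely. The price is that you need $\Theta(z)$ to be \emph{unitary} (not merely isometric) a.e.\ on $\mathbb{T}$ so that $\Theta(z)^{*}\Theta(z)=I$; this is legitimate here because the lemma sits in Section 6 where $\dim\h<\infty$ is a standing assumption and inner functions are defined to have unitary boundary values, but the paper's argument would survive in settings where boundary values are only isometries (e.g.\ infinite-dimensional or non-square inner functions), whereas yours would not. One small point you gloss over is the measurability of $z\mapsto\Theta(z)^{*}\mathbf F(z)$; in finite dimensions this is immediate from entrywise measurability, so no harm is done.
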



\begin{proof}
	The proof follows the reasoning presented in \cite[pp. 118-119]{berc}. The inclusion $M_{\mathbf{F}}(H^2(\h))\subset \Theta H^2(\h)$ means that for each $\mathbf{f}\in H^2(\h)$ there exists $\mathbf{g}\in H^2(\h)$ such that $$M_{\mathbf{F}}\mathbf{f}=M_{\Theta}\mathbf{g}.$$ Moreover, since $M_{\Theta}$ is {an isometry}, there is only one such $\mathbf{g}$ and $$\|\mathbf{g}\|=\|M_{\Theta}\mathbf{g}\|=\|M_{\mathbf{F}}\mathbf{f}\|\leqslant \|\mathbf{F}\|_{\infty} \|\mathbf{f}\|.$$
	We can thus define a bounded linear operator $T \colon  H^2(\h)\rightarrow H^2(\h)$ by $\mathbf{f}\mapsto T\mathbf{f}=\mathbf{g}$. Hence, for $\mathbf{f}\in H^2(\h)$ we have $M_{\mathbf{F}}\mathbf{f}=M_{\Theta}T\mathbf{f}$ and
	$$M_{\Theta}T\mathbf{M}_z\mathbf{f}=M_{\mathbf{F}}\mathbf{M}_z\mathbf{f}=\mathbf{M}_zM_{\mathbf{F}}\mathbf{f}=\mathbf{M}_zM_{\Theta}T\mathbf{f}
=M_{\Theta}\mathbf{M}_zT\mathbf{f},$$
	which means that $T\mathbf{M}_z\mathbf{f}=\mathbf{M}_zT\mathbf{f}$. It follows that $T=M_{\Psi}$ for some $\Psi\in H^{\infty}(L(\h))$ (\cite[Chap. 5, Theorem 1.7]{berc}) and $M_{\mathbf{F}}\f=M_{\Theta\Psi}\f$. In particular, for every $x\in \h$,
	$$\mathbf{F}(z)x=\Theta(z)\Psi(z)x\quad  {\text{a.e. on }\mathbb{T}}$$
	and so $\mathbf{F}\in H^{\infty}(L(\h))$.
\end{proof}

 We can now give another proof of Lemma \ref{lemat61}. Namely, if $\widetilde{\mathbf{F}}\in H^{\infty}(L(\h))$, then $M_{\widetilde{\mathbf{F}}}(zH^2(\h))\subset zH^2(\h)$. Since $\mathbf{F}(z)=J_1 \widetilde{\mathbf{F}}(z)J_2$  {a.e. on $\mathbb{T}$}, we have that $M_{\mathbf{F}}=\widetilde{\mathbf{J}}_1M_{\widetilde{\mathbf{F}}}\widetilde{\mathbf{J}}_2$ and
\begin{displaymath}
\begin{split}
M_{\mathbf{F}}(H^2(\h)^{\perp})&=\widetilde{\mathbf{J}}_1M_{\widetilde{\mathbf{F}}}\widetilde{\mathbf{J}}_2(H^2(\h)^{\perp})=\widetilde{\mathbf{J}}_1M_{\widetilde{\mathbf{F}}}(zH^2(\h))\\&\subset \widetilde{\mathbf{J}}_1(zH^2(\h))=H^2(\h)^{\perp}.
\end{split}
\end{displaymath}
 It follows that $M_{\mathbf{F}^{*}}=M_{\mathbf{F}}^{*}$ preserves $H^2(\h)$ and $\mathbf{F}^{*}\in H^{\infty}(L(\h))$ by Lemma \ref{lemat62}. The proof of the other implication is analogous.

\begin{lemma}\label{lemat63}
	Let $\mathbf{F}\in H^{\infty}(L(\h))$ and let $J$ be a conjugation in $\h$. Then the following are equivalent:
	\begin{enumerate}
		\item $\mathbf{F}(z)$ is $J$--symmetric a.e. on $\mathbb{T}$;
		\item $\mathbf{F}(\lambda)$ is $J$--symmetric for all $\lambda\in\mathbb{D}$.
	\end{enumerate}
\end{lemma}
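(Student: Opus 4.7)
The plan is to reduce both conditions to the same statement about the Taylor/Fourier coefficients of $\mathbf{F}$, exploiting the fact that since $\mathbf{F}\in H^{\infty}(L(\h))$ it admits an analytic extension to $\mathbb{D}$ together with boundary values almost everywhere on $\mathbb{T}$, and the Fourier coefficients of the boundary function coincide with the Taylor coefficients of the extension. Write $\mathbf{F}=\sum_{n=0}^{\infty}F_n e_n$ on $\mathbb{T}$, so that $\mathbf{F}(\lambda)=\sum_{n=0}^{\infty}F_n\lambda^n$ in $\mathbb{D}$, the series converging in operator norm on compact subsets.

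On the boundary side, I would apply formula \eqref{zero} from the proof of Lemma \ref{lemat61} (with $J_1=J_2=J$) to get that $J\mathbf{F}(\cdot)J$ has Fourier expansion $\sum_{n=-\infty}^{\infty}JF_{-n}J\,e_n$, and apply \eqref{zeroro} to get $\mathbf{F}^{*}=\sum_{n=-\infty}^{\infty}(F_{-n})^{*}e_n$. Hence $J\mathbf{F}(z)J=\mathbf{F}(z)^{*}$ a.e.\ on $\mathbb{T}$ is equivalent, by uniqueness of Fourier coefficients (both series have only non-positive index terms), to
\begin{equation*}
J F_n J=(F_n)^{*}\quad\text{for every }n\geqslant 0.
\end{equation*}

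On the disc side, using the antilinearity of $J$ one has $J(\lambda A)J=\bar\lambda\,JAJ$ for $\lambda\in\mathbb{C}$ and $A\in L(\h)$, so termwise
\begin{equation*}
J\mathbf{F}(\lambda)J=\sum_{n=0}^{\infty}\bar\lambda^{n}(JF_n J),\qquad \mathbf{F}(\lambda)^{*}=\sum_{n=0}^{\infty}\bar\lambda^{n}(F_n)^{*}.
\end{equation*}
Both sides are anti-analytic functions of $\lambda$ on $\mathbb{D}$. Setting $\mu=\bar\lambda$, as $\lambda$ ranges over $\mathbb{D}$ so does $\mu$, and $J\mathbf{F}(\lambda)J=\mathbf{F}(\lambda)^{*}$ for all $\lambda\in\mathbb{D}$ becomes the identity $\sum_{n\geqslant 0}(JF_n J-F_n^{*})\mu^{n}=0$ on $\mathbb{D}$. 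By the identity theorem (uniqueness of power series coefficients in operator norm), this holds if and only if $JF_n J=(F_n)^{*}$ for every $n\geqslant 0$.

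Since the two conditions (1) and (2) are each equivalent to the same coefficient identities, they are equivalent to each other, which completes the proof. The one place where care is required — and what I expect to be the main potential pitfall — is the correct handling of the antilinearity of $J$ when sliding it past the scalar $\lambda^{n}$, producing the conjugate $\bar\lambda^{n}$; this is what makes $J\mathbf{F}(\lambda)J$ anti-analytic rather than analytic, matching the anti-analyticity of $\mathbf{F}(\lambda)^{*}$ and allowing the coefficient comparison to go through.
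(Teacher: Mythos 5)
Your proof is correct and follows essentially the same route as the paper, which simply notes that both conditions reduce to $J$--symmetry of all the Fourier/Taylor coefficients $F_n$; you have merely supplied the details (via \eqref{zero}, \eqref{zeroro}, and the identity theorem) that the paper leaves implicit. Your remark about the antilinearity of $J$ turning $\lambda^n$ into $\bar{\lambda}^n$ is exactly the right point of care.
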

\begin{proof}
	For the proof it is enough to note that each of the conditions (1) and (2) is equivalent to $J$--symmetry of all of the coefficients {${F}_n$} of the Fourier/Taylor expansion of $\mathbf{F}$.
\end{proof}

In what follows, if the operator valued inner function $\Theta$ and the conjugation $J$ satisfy one of the conditions (1) or (2) we will simply say that $\Theta$ is  {{\it $J$--symmetric}}.

The {\it model operator} $S_\Theta\in L(\Kt)$ is given by
\begin{equation}\label{eq91}
   {(S_\Theta \f)=P_\Theta(\mathbf{M}_z\f)} \quad \text{ for } \f\in \Kt.
\end{equation}


From \cite[Theorem 3.1]{CFT} it follows that the model operator $S_\Theta$ is complex symmetric if and only if there is a conjugation $J$ in $\h$ such that for all $\lambda\in \mathbb{D}$ the matrix $\Theta(\lambda)$ is $J$--symmetric (which implies that $\Theta(z)$ is $J$--symmetric a.e. on $\mathbb{T}$).
 In that case $\Theta(z)J$ is a conjugation in $\h$  {for almost all $z\in\mathbb{T}$} and as a consequence  {$M_\Theta \widetilde{\mathbf{J}}$} is a conjugation in $L^2(\h)$ by Theorem \ref{8.3}.
From now on let us assume that $\Theta(\lambda)$ is $J$--symmetric for  {all} $\lambda\in \mathbb{D}$.
  Then $\mathbf{C}_{\Theta,J}$ defined by
  $$\mathbf{C}_{\Theta,J} \f(z)=\Theta(z)\bar z (\widetilde{\mathbf{J}}\f)(z)=\Theta(z)\bar z J(\f(z))\quad  {\text{a.e. on }\mathbb{T}} $$
 {(that is, $\mathbf{C}_{\Theta,J}=M_{\Theta}\mathbf{M}_{\overline{z}}\widetilde{\mathbf{J}}$)} is a conjugation on $L^2(\h)$ that leaves $\Kt$ invariant. Moreover, the following is true:

 \begin{proposition}\label{64}
  Let $\mathbf{V}\in L^\infty(L(\h))$ be a unitary operator valued function. Then $\mathbf{C}=M_{\mathbf{V}} \mathbf{C}_{\Theta,J}$ is a conjugation in $L^2(\h)$ if and only if $M_\mathbf{V}$ is $\mathbf{C}_{\Theta,J}$--symmetric. In other words,
 \begin{equation}\label{vzero}
 \Theta(z)J\mathbf{V}(z)\Theta(z)J=\mathbf{V}(z)^*
 \end{equation}
 almost everywhere on $\mathbb{T}$.
Moreover, if  {for almost all $z\in\mathbb{T}$,} $\mathbf{V}(z)=V_0$ {, where $V_0$} is a unitary operator in $L(\h)$, then $\Kt$ is invariant for $\mathbf{C}$.
 \end{proposition}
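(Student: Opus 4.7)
The plan is to reduce the conjugation property to $\mathbf{C}^2=I$, translate that into the pointwise formula \eqref{vzero} via Remark~\ref{rz}, and then deduce $\Kt$-invariance by rearranging \eqref{vzero} into an intertwining identity for $\Theta$.

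Since $\mathbf{V}$ is unitary valued, $M_{\mathbf{V}}$ is a unitary operator on $L^2(\h)$, and $\mathbf{C}_{\Theta,J}$ is a conjugation, so $\mathbf{C}=M_{\mathbf{V}}\mathbf{C}_{\Theta,J}$ is automatically antilinear and isometric. Using $\mathbf{C}_{\Theta,J}^{2}=I$, the condition $\mathbf{C}^2=I$ reduces to
\[
\mathbf{C}_{\Theta,J}M_{\mathbf{V}}\mathbf{C}_{\Theta,J}=M_{\mathbf{V}}^{-1}=M_{\mathbf{V}^{*}},
\]
i.e.\ $M_{\mathbf{V}}$ is $\mathbf{C}_{\Theta,J}$-symmetric. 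Since $\mathbf{C}_{\Theta,J}=A_{\mathbf{C}_{0}}$ is an $\mathbf{M}_z$-conjugation with $\mathbf{C}_{0}(z)=\Theta(z)\bar z J$, Remark~\ref{rz} rephrases this as the pointwise condition $\mathbf{C}_{0}(z)\mathbf{V}(z)\mathbf{C}_{0}(z)=\mathbf{V}(z)^{*}$ a.e.\ on $\mathbb{T}$. When expanded, the scalar $\bar z$ commutes past the linear $\mathbf{V}(z)$, and the antilinearity of $J$ converts the outer $\bar z$ into $z$; since $\bar z\cdot z=1$ on $\mathbb{T}$, the two scalars cancel, leaving exactly \eqref{vzero}.

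For the moreover part, substitute $\mathbf{V}(z)=V_{0}$ into \eqref{vzero}. Using $J^{2}=I$ together with the $J$-symmetry of $\Theta$ (which gives $J\Theta(z)^{*}=\Theta(z)J$ a.e.\ on $\mathbb{T}$), the identity $\Theta(z)JV_{0}\Theta(z)J=V_{0}^{*}$ rearranges to the intertwining relation
\[
V_{0}\Theta(z)=\Theta(z)V_{0}'\quad\text{a.e.\ on }\mathbb{T},\qquad V_{0}':=JV_{0}^{*}J,
\]
with $V_{0}'\in L(\h)$ a constant unitary. Taking adjoints pointwise yields the companion identity $V_{0}^{*}\Theta(z)=\Theta(z)(V_{0}')^{*}$ a.e.\ on $\mathbb{T}$. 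Hence, for any $\f\in\Kt$ and $\g\in H^{2}(\h)$,
\[
\langle M_{V_{0}}\f,\,M_{\Theta}\g\rangle=\langle\f,\,M_{V_{0}^{*}}M_{\Theta}\g\rangle=\langle\f,\,M_{\Theta}((V_{0}')^{*}\g)\rangle=0,
\]
since $(V_{0}')^{*}\g\in H^{2}(\h)$ and $\f\perp\Theta H^{2}(\h)$. Combined with $M_{V_{0}}\f\in H^{2}(\h)$ (as $V_0$ is a constant operator), this shows $M_{V_{0}}(\Kt)\subset\Kt$. Since $\mathbf{C}_{\Theta,J}$ also leaves $\Kt$ invariant (as recalled immediately before the statement), so does $\mathbf{C}=M_{V_{0}}\mathbf{C}_{\Theta,J}$. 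The main technical step is the rearrangement producing $V_{0}\Theta=\Theta V_{0}'$; once that is in hand, $\Kt$-invariance becomes a short orthogonality computation.
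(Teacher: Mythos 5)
Your proof is correct. For the equivalence, you take essentially the same (one-line) route as the paper: reduce to $\mathbf{C}^2=I$, note that antilinearity and the isometric property are automatic, and translate $\mathbf{C}_{\Theta,J}M_{\mathbf{V}}\mathbf{C}_{\Theta,J}=M_{\mathbf{V}^*}$ into the pointwise identity \eqref{vzero}; the paper simply asserts this, while you fill in the cancellation of the scalars $\bar z$ and $z$. For the invariance of $\Kt$ you genuinely diverge. Both arguments hinge on rewriting \eqref{vzero} as $V_0\Theta(z)=\Theta(z)JV_0^*J$, but the paper uses this to compute $\mathbf{C}$ on the monomials $e_{-n}x$ ($n\geqslant 1$) and $\Theta e_n x$ ($n\geqslant 0$), showing that $\mathbf{C}$ maps $L^2(\h)\ominus H^2(\h)$ into $\Theta H^2(\h)$ and $\Theta H^2(\h)$ into $L^2(\h)\ominus H^2(\h)$, hence preserves $\Kt^{\perp}=(L^2(\h)\ominus H^2(\h))\oplus\Theta H^2(\h)$ and therefore, being an antilinear isometric involution, preserves $\Kt$ itself. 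You instead prove directly that the constant unitary $M_{V_0}$ maps $\Kt$ into $\Kt$ --- via the companion identity $V_0^*\Theta(z)=\Theta(z)(V_0')^*$ and a short orthogonality computation --- and then compose with the already-recorded invariance of $\Kt$ under $\mathbf{C}_{\Theta,J}$. Your version is a bit more self-contained at the final step, since it avoids the (implicit in the paper) passage from invariance of $\Kt^{\perp}$ to invariance of $\Kt$, and it isolates the useful fact that $M_{V_0}$ alone preserves $\Kt$; the paper's version yields the extra information that $\mathbf{C}$ actually interchanges the two summands of $\Kt^{\perp}$.
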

 \begin{proof}%
 Note that \eqref{vzero} is a consequence of $\mathbf{C}$ being an involution.
 For the proof of the second statement note that by \eqref{vzero},
 $$V_0\Theta(z)=\Theta(z)JV_0^* J\quad {\text{a.e. on }\mathbb{T}}.$$
 So for $n\geqslant 1$  {and $x\in \h$,
 \begin{displaymath}
 \begin{split}
 M_{\mathbf{V}}\mathbf{C}_{\Theta,J}( e_{-n}x)&=\mathbf{C}_{\Theta,J}M_{\mathbf{V}^*}( e_{-n}x)=\mathbf{C}_{\Theta,J}( e_{-n}V_0^*x)\\
 &=M_{\Theta}\mathbf{M}_{\overline{z}}\widetilde{\mathbf{J}}( e_{-n}V_0^*x)=M_{\Theta}(e_{n-1}JV_0^*x).
\end{split}
\end{displaymath}}
Hence $\mathbf{C}(L^2(\h)\ominus H^2(\h))\subset \Theta H^2(\h)$. On the other hand, for $n\geqslant 0$  {and $x\in \h$,
 \begin{displaymath}
 	\begin{split}
M_{\mathbf{V}} \mathbf{C}_{\Theta,J}M_\Theta (e_nx )&=M_{\mathbf{V}} M_{\Theta^*}\mathbf{C}_{\Theta,J} (e_nx )=M_{\mathbf{V}} M_{\Theta^*}M_{\Theta}\mathbf{M}_{\overline{z}}\widetilde{\mathbf{J}} (e_nx )\\
&=M_{\mathbf{V}} (e_{-n-1}Jx )=e_{-n-1}V_0Jx.
 	\end{split}
 \end{displaymath}}
 Hence $\mathbf{C}(\Theta H^2(\h))\subset L^2(\h)\ominus H^2(\h)$.
 \end{proof}

 {For $\lambda\in\mathbb{D}$ define an operator valued function $k_\lambda^\Theta$ by
$$k_\lambda^\Theta(z)=\frac{1}{1-\bar \lambda z}(1-\Theta(z)\Theta(\lambda)^*)\quad \text{a.e. on }\mathbb{T}.$$
For each $x\in \h$ denote the function $z\mapsto k_\lambda^\Theta(z)x$ by $k_\lambda^\Theta x$. Recall from \cite{KT} that $k_\lambda^\Theta x\in K_{\Theta}$ and for each $f\in K_{\Theta}$,
$$\langle f,k_\lambda^\Theta x\rangle=\langle f(\lambda),x\rangle$$
(the inner product on the left is the $L^2(\h)$ inner product while the inner product on the right is the inner product from $\h$). Similarly, denote by $\widetilde{k_\lambda^\Theta}x$ the function $z\mapsto \widetilde{k_\lambda^\Theta}(z)x$, where
$$\widetilde{k_\lambda^\Theta}(z)=\frac{1}{z-\lambda}(\Theta(z)-\Theta(\lambda))\quad \text{a.e. on }\mathbb{T}.$$}

For $\Theta\in H^2(L(\h))$ recall that $\Theta^{\#}(z)=\Theta(\bar z)^*$, and $\Theta$ is inner if and only if $\Theta^{\#}$ is inner.

\begin{lemma}\label{l66}
	Let $J$ be a conjugation on $\h$ such that  {$\Theta$ is $J$--symmetric.  Then $\mathbf{C}_{\Theta,J}( k_\lambda^\Theta x)=\widetilde{k_\lambda^\Theta}Jx$} for $x\in \h$.
\end{lemma}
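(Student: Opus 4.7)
The plan is to compute $\mathbf{C}_{\Theta,J}(k_\lambda^\Theta x)(z)$ directly from the definition and simplify, using three key facts: the antilinearity of $J$, the $J$-symmetry of $\Theta$ (which by Lemma \ref{lemat63} holds at every point), and the inner property $\Theta(z)\Theta(z)^*=I_{\h}$ a.e.\ on $\mathbb{T}$.

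First, I would unfold $\mathbf{C}_{\Theta,J}(k_\lambda^\Theta x)(z)=\Theta(z)\bar{z}\,J\bigl(k_\lambda^\Theta(z)x\bigr)$ and move $J$ past the scalar $(1-\bar{\lambda} z)^{-1}$, which by antilinearity produces $(1-\lambda\bar{z})^{-1}$. Then I would push $J$ past $\Theta(z)\Theta(\lambda)^*$ using $J$-symmetry twice: from $J\Theta(\mu)J=\Theta(\mu)^*$ one obtains $J\Theta(z)=\Theta(z)^*J$ and $J\Theta(\lambda)^*=\Theta(\lambda)J$, so that
\begin{equation*}
J\bigl(\Theta(z)\Theta(\lambda)^*x\bigr)=\Theta(z)^*\Theta(\lambda)Jx.
\end{equation*}
This yields
\begin{equation*}
J\bigl(k_\lambda^\Theta(z)x\bigr)=\frac{1}{1-\lambda\bar{z}}\bigl(Jx-\Theta(z)^*\Theta(\lambda)Jx\bigr).
\end{equation*}

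Next I would multiply by $\Theta(z)\bar{z}$ and collapse $\Theta(z)\Theta(z)^*=I_{\h}$ (valid a.e.\ on $\mathbb{T}$ since $\Theta$ is inner), obtaining
\begin{equation*}
\mathbf{C}_{\Theta,J}(k_\lambda^\Theta x)(z)=\frac{\bar{z}}{1-\lambda\bar{z}}\bigl(\Theta(z)-\Theta(\lambda)\bigr)Jx.
\end{equation*}
Finally, a.e.\ on $\mathbb{T}$ one has $\bar{z}=1/z$, hence $\bar{z}/(1-\lambda\bar{z})=1/(z-\lambda)$, which gives the claimed identity $\mathbf{C}_{\Theta,J}(k_\lambda^\Theta x)=\widetilde{k_\lambda^\Theta}Jx$.

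The only non-routine part is the algebraic juggling in the second step: one has to recognize that each of the two $\Theta$-factors inside $k_\lambda^\Theta$ gets replaced by its adjoint when $J$ is pushed through, but the two replacements cancel in a way that leaves exactly $\Theta(z)-\Theta(\lambda)$ after multiplying by $\Theta(z)$ and using unitarity of $\Theta(z)$ on $\mathbb{T}$. Everything else is bookkeeping on the boundary circle.
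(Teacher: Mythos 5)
Your proposal is correct and follows exactly the paper's own proof: the paper's argument is precisely this one chain of equalities, unfolding $\mathbf{C}_{\Theta,J}$, pushing $J$ through the scalar and through $\Theta(z)\Theta(\lambda)^*$ via $J$-symmetry, and then using $\Theta(z)\Theta(z)^*=I$ and $\bar z=1/z$ a.e.\ on $\mathbb{T}$. Nothing to add.
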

\begin{proof}
	Note that
	\begin{align*}\label{eq911}
	(\mathbf{C}_{\Theta,J}\, k_\lambda^\Theta x)(z) &= \Theta(z)\bar z J(k_\lambda^\Theta(z)x)=\Theta(z)\bar zJ(\tfrac{1}{1-\bar\lambda z}(1-\Theta(z)\Theta(\lambda)^*)x) \\
	&= \tfrac{\bar z}{1-\lambda \bar z}\Theta(z)(1-\Theta(z)^*\Theta(\lambda))Jx=\tfrac{1}{z-\lambda}(\Theta(z)-\Theta(\lambda))Jx.
	\end{align*}
\end{proof}


 Proposition \ref{64} describes a class of conjugations in $L^2(\h)$ which leave model spaces invariant. The following result says that amongst all $\mathbf{M}_z$--conjugations only conjugations in that class have this property.
\begin{theorem}\label{66}
Let $\mathbf{C}$ be an $\mathbf{M}_z$--conjugation on $L^2(\h)$ and let $\Theta\in L^\infty(L(\h))$ be a pure inner function such that $\Theta(z)$ is $J$--symmetric for almost all $z\in \mathbb{T}$ with a conjugation $J$ on $\h$. Suppose that $\Kt$ is invariant for $\mathbf{C}$. Then $\mathbf{C}=M_\mathbf{V}\mathbf{C}_{\Theta,J}$ with $\mathbf{V}$ a unitary valued  constant function, $\mathbf{V}(z)=V_0$  {a.e. on $\mathbb{T}$,}  such that $M_\mathbf{V}$ is $\mathbf{C}_{\Theta,J}$--symmetric.
\end{theorem}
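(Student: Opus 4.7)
The plan is to first reduce $\mathbf{C}$ to the form $M_\mathbf{V}\mathbf{C}_{\Theta,J}$ for some unitary valued $\mathbf{V}\in L^\infty(L(\h))$, and then use the invariance of $K_\Theta$ to show $\mathbf{V}$ must be constant. Since $\mathbf{C}_{\Theta,J}$ is itself an $\mathbf{M}_z$--conjugation, the composition $\mathbf{W}=\mathbf{C}\,\mathbf{C}_{\Theta,J}$ is a bounded linear unitary on $L^2(\h)$. Combining the intertwining relations $\mathbf{C}\mathbf{M}_z=\mathbf{M}_{\bar z}\mathbf{C}$ and $\mathbf{C}_{\Theta,J}\mathbf{M}_z=\mathbf{M}_{\bar z}\mathbf{C}_{\Theta,J}$ yields $\mathbf{W}\mathbf{M}_z=\mathbf{M}_z\mathbf{W}$, so by the characterization of $\mathbf{M}_z$--commuting unitaries used in the proof of Theorem~\ref{8.1} one has $\mathbf{W}=M_\mathbf{V}$ for a unitary valued $\mathbf{V}\in L^\infty(L(\h))$. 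Hence $\mathbf{C}=M_\mathbf{V}\mathbf{C}_{\Theta,J}$, and the involution $\mathbf{C}^2=I$ together with Proposition~\ref{64} forces $M_\mathbf{V}$ to be $\mathbf{C}_{\Theta,J}$--symmetric; only the constancy of $\mathbf{V}$ remains to be established.

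Taking the antilinear adjoint of $\mathbf{C}=M_\mathbf{V}\mathbf{C}_{\Theta,J}$ and using $\mathbf{C}^{\sharp}=\mathbf{C}$ yields the dual factorization $\mathbf{C}=\mathbf{C}_{\Theta,J}M_{\mathbf{V}^*}$. Combining both with the hypothesis $\mathbf{C}(K_\Theta)=K_\Theta$ and with $\mathbf{C}_{\Theta,J}(K_\Theta)=K_\Theta$, I obtain
$$M_\mathbf{V}(K_\Theta)=K_\Theta=M_{\mathbf{V}^*}(K_\Theta).$$
Because multiplication operators commute with $\mathbf{M}_z$, it follows that $M_\mathbf{V}(\mathbf{M}_z^k K_\Theta)\subset \mathbf{M}_z^k K_\Theta\subset z^k H^2(\h)$ for every $k\geqslant 0$, and analogously for $M_{\mathbf{V}^*}$. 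Thus both $M_\mathbf{V}$ and $M_{\mathbf{V}^*}$ carry the subspace $\mathcal{M}:=\bigvee_{k\geqslant 0}\mathbf{M}_z^k K_\Theta$ into $H^2(\h)$.

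If one can show $\mathcal{M}=H^2(\h)$, then $\mathbf{V},\mathbf{V}^*\in H^\infty(L(\h))$, and inspection of the Fourier expansion (via formula \eqref{zeroro}) forces $\mathbf{V}(z)=V_0$ to be a constant unitary, completing the proof. The density claim, which I expect to be the main obstacle, is established as follows. Set $\mathcal{N}=H^2(\h)\ominus\mathcal{M}$; this subspace is $S^*$--invariant (with $S=\mathbf{M}_z|_{H^2(\h)}$) and, since $\mathcal{N}\perp K_\Theta$, is contained in $\Theta H^2(\h)$. Given $0\neq h\in\mathcal{N}$, write $h=M_\Theta g$ with $g\in H^2(\h)$; since $S^*h\in\mathcal{N}\subset\Theta H^2(\h)$ as well, there exists $g_1\in H^2(\h)$ with $M_\Theta(g-\mathbf{M}_z g_1)=\Theta(0)g(0)$. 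Setting $u=g-\mathbf{M}_z g_1\in H^2(\h)$ and using that $\Theta(z)$ is unitary on $\mathbb{T}$, one gets $u(z)=\Theta(z)^*\Theta(0)g(0)$ a.e., which places $u$ in $\overline{H^2(\h)}$; hence $u\in H^2(\h)\cap\overline{H^2(\h)}=\h$ is the constant $g(0)$. Back-substituting gives $\Theta(z)g(0)\equiv\Theta(0)g(0)$, so
$$\|g(0)\|=\|\Theta(z)g(0)\|=\|\Theta(0)g(0)\|\leqslant\|\Theta(0)\|\cdot\|g(0)\|<\|g(0)\|$$
unless $g(0)=0$, where the purity hypothesis $\|\Theta(0)\|<1$ enters decisively. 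Thus $g(0)=0$, i.e., $g\in\mathbf{M}_z H^2(\h)$; iterating the same reasoning applied to $S^{*k}h=M_\Theta h^{(k)}$ forces $g\in\bigcap_{k\geqslant 0}\mathbf{M}_z^k H^2(\h)=\{0\}$, contradicting $h\neq 0$, and so $\mathcal{N}=\{0\}$.
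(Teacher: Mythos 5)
Your proof is correct, and its overall skeleton coincides with the paper's: both factor $\mathbf{C}=M_{\mathbf{V}}\mathbf{C}_{\Theta,J}$ with $\mathbf{V}$ unitary valued (your $M_{\mathbf{V}}=\mathbf{C}\mathbf{C}_{\Theta,J}$ is exactly the paper's $\mathbf{M}_z\mathbf{U}\Theta^*$), and both finish by showing that $\mathbf{V}$ and $\mathbf{V}^*$ are simultaneously analytic, hence constant. Where you genuinely diverge is in how analyticity is extracted from the invariance of $\Kt$. The paper evaluates $\mathbf{C}\mathbf{C}_{\Theta,J}$ and $\mathbf{C}_{\Theta,J}\mathbf{C}$ on the reproducing kernels $k_0^{\Theta}x$, obtaining the auxiliary symbols $\mathbf{W}_i=\mathbf{V}^{(*)}(1-\Theta\Theta(0)^*)$ which map constants (hence all polynomials) into $H^2(\h)$, and then uses purity only through the boundedness of $(1-\Theta(\cdot)\Theta(0)^*)^{-1}$ to strip off the extra factor. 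You instead prove and invoke the cyclicity statement $\bigvee_{k\geqslant 0}\mathbf{M}_z^k\Kt=H^2(\h)$, with purity entering through the strict inequality $\|\Theta(0)\|<1$ in your norm estimate; the invariance $M_{\mathbf{V}}(\Kt)\subset\Kt$, $M_{\mathbf{V}^*}(\Kt)\subset\Kt$ then does the rest. Your route costs you the (correct, and carefully executed) iteration argument showing the orthocomplement $\mathcal{N}$ of $\bigvee_k\mathbf{M}_z^k\Kt$ is trivial, but it buys a structural fact about pure model spaces that is reusable elsewhere and avoids any kernel computation; the paper's route is shorter because $(1-\Theta\Theta(0)^*)^{-1}\in H^{\infty}(L(\h))$ does the work of your density lemma in one line. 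The final step in both arguments is the same Fourier-coefficient observation via \eqref{zeroro}.
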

\begin{proof}
By Theorem \ref{8.3} we know that $\mathbf{C}=M_\mathbf{U}\widetilde{\mathbf{J}}=\widetilde{\mathbf{J}}M_{\mathbf{U}^*}$, where $\mathbf{U}\in L^\infty(L(\h))$,  {and for almost all $z\in\mathbb{T}$,} $\mathbf{U}(z)$ is unitary and $J$--symmetric. Fix $x\in\h$. Note that  {a.e. on $\mathbb{T}$,}
\begin{align*}
(\mathbf{C}\mathbf{C}_{\Theta,J}\, k_\lambda^\Theta x)(z)&
=\mathbf{U}(z)J(\tfrac{1}{z-\lambda}(\Theta(z)-\Theta(\lambda)))Jx\\&
=\mathbf{U}(z)\tfrac{1}{\bar z-\bar \lambda}(\Theta(z)^*-\Theta(\lambda)^*)x\\&=\tfrac{z}{1-\bar\lambda z}\mathbf{U}(z)\Theta(z)^*(1-\Theta(z)\Theta(\lambda)^*)x\\&=z\mathbf{U}(z)\Theta(z)^*k_\lambda^\Theta(z)x.
\end{align*}
Denote by $\mathbf{W}_1$ the operator valued function given by $$\mathbf{W}_1(z)=z\mathbf{U}(z)\Theta(z)^*(1-\Theta(z)\Theta(0)^*)\quad {\text{a.e. on }\mathbb{T}}.$$
 Since $\mathbf{C}\mathbf{C}_{\Theta,J}\, k_0^\Theta x\in \Kt$, then $M_{\mathbf{W}_1}x \in H^2(\h)$.
By commutativity of $M_{\mathbf{W}_1}$ with $\mathbf{M}_z$, we get that for  {$n=0,1,\dots$,
$$M_{\mathbf{W}_1} (e_n x)\in H^2(\h).$$}
Since $x\in \h$ is arbitrary, hence $H^2(\h)$ is invariant for $M_{\mathbf{W}_1}$.

 {Recalling that $\Theta$ is a pure analytic function, i.e., $\|\Theta(0)\|<1$, we have that $z\mapsto (1-\Theta(z)\Theta(0)^*)^{-1}$ is a bounded analytic function. Hence for any $\f\in H^2(\h)$ we have $(1-\Theta(\cdot)\Theta(0)^*)^{-1}\f\in H^2(\h)$ and
$$M_{\mathbf{W}_1}(1-\Theta(\cdot)\Theta(0)^*)^{-1}\f=zM_\mathbf{U}\Theta(\cdot)^*\f\in H^2(\h).$$}
Therefore, $\mathbf{V}=\mathbf{M}_z \mathbf{U}\Theta^*$ is analytic.
On the other hand,  {a.e. on $\mathbb{T}$,}
\begin{align*}
(\mathbf{C}_{\Theta,J} \mathbf{C} k_0^\Theta x)(z)&=(\mathbf{C}_{\Theta,J} \widetilde{\mathbf{J}}M_{\mathbf{U}^*}k_0^\Theta x)(z)\\&
=\Theta(z)\bar z \mathbf{U}(z)^* (1-\Theta(z)\Theta(0)^*)x\\&=\Theta(z)\mathbf{U}(z)^*\bar z(1-\Theta(z)\Theta(0)^*)x.
\end{align*}
Similarly to what was done above we define $\mathbf{W}_2(z)=\Theta(z)\mathbf{U}(z)^*\bar z(1-\Theta(z)\Theta(0)^*)$  {a.e. on $\mathbb{T}$}. Observe that
 $M_{\mathbf{W}_2} \mathbf{M}_z=\mathbf{M}_zM_{\mathbf{W}_2}$, and consequently as above $M_{\mathbf{W}_2}( H^2(\h))\subset H^2(\h)$. Hence for $\f\in H^2(\h)$ we get $$M_{\mathbf{W}_2}(1-\Theta(\cdot)\Theta(0)^*)^{-1}\f= {M_\Theta M_{\mathbf{U}^*}\mathbf{M}_{\bar z}} \f\in H^2(\h).$$
 {Therefore, $\mathbf{V}^*=\Theta\mathbf{U}^*\mathbf{M}_{\bar z}$ is also analytic.}
It follows that $\mathbf{V}$ is a constant unitary operator valued function, $\mathbf{V}(z)=V_0$  {a.e. on $\mathbb{T}$}. A direct calculation shows that $V_0$ satisfies  \eqref{vzero}. Hence $\mathbf{C}=M_\mathbf{V}\mathbf{C}_{\Theta,J}$.
\end{proof}

\begin{remark}
Let $\Theta\in L^\infty(L(\h))$ be a pure inner function and let $\mathbf{C}$ be an
$\mathbf{M}_z$--conjugation in $L^2(\h)$ which leaves $\Kt$ invariant. Then by Theorem \ref{66}, for every conjugation $J$ in $\h$ such that $\Theta(z)$ is $J$--symmetric a.e. on $\mathbb{T}$, there exists a unitary operator $V_0\in L(\h)$ such that $ \mathbf{C}=M_{\mathbf{V}}\mathbf{C}_{\Theta,J}$, where $\mathbf{V}(z)=V_0$  {a.e. on $\mathbb{T}$}. Therefore,  for such conjugations $J$ and $J^\prime$ in $\h$ there exist unitary operators $V_0,V_0^\prime$ and constant operator valued functions $\mathbf{V}(z)=V_0$, $\mathbf{V}^\prime(z)=V_0^\prime$  {(a.e. on $\mathbb{T}$)} such that
\[\mathbf{C}=M_{\mathbf{V}}\mathbf{C}_{\Theta,J}= M_{\mathbf{V}^\prime}\mathbf{C}_{\Theta,J^\prime}. \]
Moreover, $V_0^\prime= V_0 J J^\prime $.
\end{remark}

\begin{example}
{To illustrate Theorem \ref{66} consider $\h=\mathbb{C}^2$ and the conjugation $J_1(z_1,z_2)=(\bar z_1,\bar z_2)$. Note that $\Theta(z)=\begin{bmatrix} z& 0\\0& z^2\end{bmatrix}$ is $J_1$--symmetric and defines a pure inner matrix valued function. Then $\Kt=\{(a_0,b_0+b_1 z): a_0, b_0, b_1\in\mathbb{C}\}$ and the conjugation $\mathbf{C}_{\Theta,J_1}$ is equal to
$\begin{bmatrix} \tilde{J}&0\\0&z\tilde{J}\end{bmatrix}$.}

{Assume that $\mathbf{C}$ is an $\mathbf{M}_z$--conjugation. By Theorem \ref{t3}
$$\mathbf{C}=\left[   \begin{BMAT}{cc}{cc}
    M_{\psi_1}\tilde{J}  & M_{\psi_2}\tilde{J}  \\
    M_{\psi_2}\tilde{J}  & M_{\psi_4}\tilde{J}\\
   \end{BMAT} \right]$$
with $\psi_i$ satisfying \eqref{18}--\eqref{19}.
 Hence $$\mathbf{C}(a_0,b_0+b_1 z)=(\psi_1 \bar a_0+\psi_2(\bar b_0+\bar b_1\bar z),\psi_2\bar a_0+\psi_4(\bar b_0+\bar b_1\bar z))$$
  for any $a_0, b_0, b_1\in \mathbb{C}$. If $\mathbf{C}$  leaves subspace $\Kt$ invariant, then
 $\psi_1=\lambda_1$, $\psi_2=0$, $\psi_4=\lambda_4 z$ with $\lambda_i\in\mathbb{T}$. It is clear that $\mathbf{C}=\begin{bmatrix}                                                                                                                       \lambda_1 & 0 \\
 0 & \lambda_4
  \end{bmatrix}\mathbf{C}_{\Theta,J_1}$.}
\end{example}


\section{Conjugations between model spaces}
In this section we consider conjugations which map one model space into another.
Let $\dim\h<\infty$.

\begin{lemma}\label{lemat64}
	Let $\Theta,\Lambda\in H^{\infty}(L(\h))$ be two inner functions. Then the following are equivalent:
	\begin{enumerate}
		\item $\Lambda^{*}\Theta\in H^{\infty}(L(\h))$;
		\item $\Theta H^2(\h)\subset \Lambda H^2(\h)$;
		\item $K_{\Lambda}\subset K_{\Theta}$.
	\end{enumerate}
\end{lemma}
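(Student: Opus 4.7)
The plan is to prove the equivalences by closing a cycle (1) $\Rightarrow$ (2) $\Rightarrow$ (3) $\Rightarrow$ (2) $\Rightarrow$ (1), using Lemma \ref{lemat62} as the essential ingredient for the passage from an inclusion of shift-invariant subspaces back to an operator valued factorization.

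First, for (1) $\Rightarrow$ (2), I would use that $\Lambda$ inner means $\Lambda(z)^*\Lambda(z)=\Lambda(z)\Lambda(z)^*=I_{\h}$ a.e.\ on $\mathbb{T}$. Thus, for any $\f\in H^2(\h)$, we can write $\Theta\f=\Lambda(\Lambda^*\Theta)\f$, and since by assumption $\Lambda^*\Theta\in H^\infty(L(\h))$, the function $(\Lambda^*\Theta)\f$ belongs to $H^2(\h)$. Hence $\Theta\f\in \Lambda H^2(\h)$.

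Next, for (2) $\Leftrightarrow$ (3), I would argue via orthogonal decompositions within $H^2(\h)$. Since $\Lambda$ is inner, $M_\Lambda$ is an isometry on $H^2(\h)$ and $H^2(\h)=K_\Lambda\oplus \Lambda H^2(\h)$; similarly $H^2(\h)=K_\Theta\oplus \Theta H^2(\h)$. Taking orthogonal complements inside $H^2(\h)$, the inclusion $\Theta H^2(\h)\subset \Lambda H^2(\h)$ is equivalent to $K_\Lambda\subset K_\Theta$, giving both directions simultaneously.

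Finally, for (2) $\Rightarrow$ (1), this is where Lemma \ref{lemat62} does the work. The hypothesis says exactly that $M_\Theta(H^2(\h))\subset \Lambda H^2(\h)$, so by Lemma \ref{lemat62} applied with $\mathbf{F}=\Theta$ (and with $\Lambda$ in place of the inner function there), there exists $\Psi\in H^\infty(L(\h))$ such that $\Theta=\Lambda\Psi$ a.e.\ on $\mathbb{T}$. Multiplying on the left by $\Lambda^*$ and using $\Lambda^*\Lambda=I_{\h}$ a.e., we obtain $\Lambda^*\Theta=\Psi\in H^\infty(L(\h))$, which is (1).

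The only step requiring real content is (2) $\Rightarrow$ (1), and it is handled entirely by invoking Lemma \ref{lemat62}; the remaining implications are routine manipulations with the inner property and orthogonality. A minor point to be careful about is that $\Lambda^*\Theta$ is a priori defined only a.e.\ on $\mathbb{T}$ (not obviously analytic), so the conclusion that it lies in $H^\infty(L(\h))$ really does require the factorization lemma rather than just a boundary computation.
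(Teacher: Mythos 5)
Your proposal is correct and follows essentially the same route as the paper: (1) $\Leftrightarrow$ (2) via the factorization $\Theta=\Lambda\Psi$ with Lemma \ref{lemat62} supplying the nontrivial direction, and (2) $\Leftrightarrow$ (3) by taking orthogonal complements inside $H^2(\h)$ (which the paper simply declares obvious). No gaps.
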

\begin{proof}
	If (1) holds, that is, $\Lambda^{*}\Theta=\Psi\in H^{\infty}(L(\h))$, then $\Theta=\Lambda\Psi$ and so (2) is satisfied. On the other hand, if (2) is satisfied, then by Lemma \ref{lemat62}, $\Theta=\Lambda\Psi$ for some $\Psi\in H^{\infty}(L(\h))$, $\Psi=\Lambda^{*}\Theta$ and (1) follows. The equivalence of (2) and (3) is obvious.
\end{proof}

In view of Lemma \ref{lemat64} we will say that an operator valued inner function $\Lambda$ divides an operator valued inner function $\Theta$, and we write $\Lambda\leqslant \Theta$, if $\Lambda^{*}\Theta\in H^{\infty}(L(\h))$. Equivalently, $\Lambda\leqslant \Theta$, if $\Theta=\Lambda\Psi$ for some $\Psi\in H^{\infty}(L(\h))$ (clearly  $\Psi=\Lambda^{*}\Theta$ is also an operator valued inner function).

\begin{proposition}\label{stw65}
	Let $\Theta,\Lambda\in H^{\infty}(L(\h))$ be two inner functions. If there exists a conjugation $J$ in $\h$ such that both $\Theta$ and $\Lambda$ are $J$--symmetric, then the following are equivalent:
	\begin{enumerate}
		\item $\Lambda^{*}\Theta\in H^{\infty}(L(\h))$;
		\item $\Theta\Lambda^{*}\in H^{\infty}(L(\h))$.
	\end{enumerate}
\end{proposition}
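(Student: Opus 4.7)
The plan is to apply Lemma \ref{lemat61} to a carefully chosen operator-valued function, exploiting the $J$-symmetry of $\Theta$ and $\Lambda$ to rewrite one of the products we care about as $J \mathbf{F} J$ for some $\mathbf{F}$ whose adjoint is the other product.

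First I would record the algebraic consequence of $J$-symmetry. By Lemma \ref{lemat63}, $J$-symmetry of $\Theta$ and $\Lambda$ in $\mathbb{D}$ is equivalent to the almost everywhere equalities $J\Theta(z)J=\Theta(z)^{*}$ and $J\Lambda(z)J=\Lambda(z)^{*}$. Since a conjugation satisfies $J^{2}=I$ and $J(AB)J=(JAJ)(JBJ)$ for all linear $A,B\in L(\h)$, these identities combine to give, a.e.\ on $\mathbb{T}$,
\begin{equation*}
J\Theta(z)^{*}\Lambda(z)J=(J\Theta(z)^{*}J)(J\Lambda(z)J)=\Theta(z)\Lambda(z)^{*}.
\end{equation*}

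Next I would set $\mathbf{F}=\Theta^{*}\Lambda\in L^{\infty}(L(\h))$, which is well defined since both $\Theta$ and $\Lambda$ are bounded. With $J_{1}=J_{2}=J$, the previous identity gives $\widetilde{\mathbf{F}}(z)=J\mathbf{F}(z)J=\Theta(z)\Lambda(z)^{*}$, while $\mathbf{F}^{*}=\Lambda^{*}\Theta$. By Lemma \ref{lemat61}, $\widetilde{\mathbf{F}}\in H^{\infty}(L(\h))$ if and only if $\mathbf{F}^{*}\in H^{\infty}(L(\h))$, which is exactly the equivalence
\begin{equation*}
\Theta\Lambda^{*}\in H^{\infty}(L(\h))\iff\Lambda^{*}\Theta\in H^{\infty}(L(\h)).
\end{equation*}

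I do not anticipate a genuine obstacle here: everything reduces to picking the right $\mathbf{F}$ to feed into Lemma \ref{lemat61}. The only point that needs a touch of care is the interchange $J\Theta^{*}J=(J\Theta J)^{*}=\Theta$, which is the standard fact that conjugating a linear operator by $J$ commutes with taking the linear adjoint; this was essentially observed already when deducing that $J\Theta J=\Theta^{*}$ is equivalent to $J$-symmetry of $\Theta$, so no new work is required.
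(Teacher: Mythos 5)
Your proof is correct, and it takes a genuinely different route from the paper's. You reduce the equivalence to a single application of Lemma \ref{lemat61}: setting $\mathbf{F}=\Theta^{*}\Lambda$ and using the identities $J\Theta(z)J=\Theta(z)^{*}$, $J\Lambda(z)J=\Lambda(z)^{*}$ (hence $J\Theta(z)^{*}J=\Theta(z)$, via the standard fact $(JAJ)^{*}=JA^{*}J$), you get $J\mathbf{F}(z)J=\Theta(z)\Lambda(z)^{*}$ while $\mathbf{F}^{*}=\Lambda^{*}\Theta$, so the lemma delivers the equivalence immediately and symmetrically. The paper instead argues through model spaces: it writes $\mathbf{C}_{\Theta,J}=M_{\Theta\Lambda^{*}}\mathbf{C}_{\Lambda,J}$, uses $\mathbf{C}_{\Lambda,J}(K_{\Lambda})=K_{\Lambda}$ and $\mathbf{C}_{\Theta,J}(K_{\Theta})=K_{\Theta}$ to deduce $K_{\Lambda}\subset K_{\Theta}$ from $\Theta\Lambda^{*}\in H^{\infty}(L(\h))$, and then invokes Lemma \ref{lemat64} to translate that containment back into $\Lambda^{*}\Theta\in H^{\infty}(L(\h))$. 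Your argument is shorter and purely algebraic (ultimately a Fourier-coefficient statement via Lemma \ref{lemat61}), avoiding the conjugations $\mathbf{C}_{\Theta,J}$ altogether; the paper's version is less direct but reuses the model-space machinery that the rest of Section 7 is built on. The only point worth making explicit in your write-up is the identity $(JAJ)^{*}=JA^{*}J$ for linear $A$ and a conjugation $J$, which you correctly flag; with that recorded, there is no gap.
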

\begin{proof}
 Assume that $\Theta\Lambda^*\in H^\infty(L(\h))$. Then $\Theta\Lambda^*(H^2(\h))\subset H^2(\h)$. Note also that $\Theta \Lambda^*(\Lambda H^2(\h))=\Theta H^2(\h)$ and $$\mathbf{C}_{\Theta,J}=M_{\Theta\Lambda^{*}}\mathbf{C}_{\Lambda,J}.$$
Since both $\Lambda$ and $\Theta$ are $J$--symmetric,  {we have} $\mathbf{C}_{\Lambda,J}(K_\Lambda)=K_\Lambda$ and $\mathbf{C}_{\Theta,J}(K_\Theta)=K_\Theta$.  Hence $$\mathbf{C}_{\Theta,J}(K_\Lambda)= M_{\Theta\Lambda^{*}}\mathbf{C}_{\Lambda,J}(K_\Lambda)=M_{\Theta\Lambda^{*}}(K_\Lambda)\subset K_\Theta,$$
which implies that
$$K_\Lambda\subset \mathbf{C}_{\Theta,J}(K_\Theta)=K_\Theta.$$
By Lemma \ref{lemat64} it follows that $\Lambda^*\Theta\in H^\infty(L(\h))$. The other implication can be proved analogously.
\end{proof}

\begin{theorem}\label{th66}
	Let $\Theta,\Lambda\in H^{\infty}(L(\h))$ be two pure inner functions and assume that there exist conjugations $J_{\Theta}$, $J_{\Lambda}$ in $\h$ such that $\Theta$ is $J_{\Theta}$--symmetric and $\Lambda$ is $J_{\Lambda}$--symmetric. Moreover, let $\mathbf{C}$ be an $\mathbf{M}_z$--conjugation in $L^2(\h)$. Then $\mathbf{C}(K_{\Lambda})\subset \Kt$ if and only if $\mathbf{C}=\mathbf{C}_{\Gamma,J}$ for some inner function $\Gamma\in H^{\infty}(L(\h))$ and for some conjugation $J$ in $\h$ such that $\Lambda\leqslant \Gamma\leqslant\Theta$ and $\Gamma$ is $J$--symmetric. In particular, then $\Lambda\leqslant\Theta$.
\end{theorem}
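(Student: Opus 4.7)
The \emph{if} direction is immediate: if $\mathbf{C}=\mathbf{C}_{\Gamma,J}$ with $\Lambda\leqslant\Gamma\leqslant\Theta$, then by Lemma \ref{lemat64}, $K_\Lambda\subset K_\Gamma\subset K_\Theta$, and since $\mathbf{C}_{\Gamma,J}$ leaves $K_\Gamma$ invariant, $\mathbf{C}(K_\Lambda)\subset K_\Gamma\subset K_\Theta$. For the \emph{only if} direction, the plan is to fix $J=J_\Lambda$ and produce $\Gamma$ explicitly. By Theorem \ref{8.3} applied with the conjugation $J_\Lambda$, write $\mathbf{C}=M_\mathbf{U}\widetilde{\mathbf{J}_\Lambda}$ with $\mathbf{U}\in L^\infty(L(\h))$ unitary valued and $J_\Lambda$-symmetric, and set $\Gamma(z)=z\mathbf{U}(z)$. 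A direct check shows that $\Gamma$ is $J_\Lambda$-symmetric and unitary valued on $\mathbb{T}$, and $M_\Gamma\mathbf{M}_{\bar z}\widetilde{\mathbf{J}_\Lambda}=M_\mathbf{U}\widetilde{\mathbf{J}_\Lambda}=\mathbf{C}$, so \emph{if} additionally $\Gamma\in H^\infty(L(\h))$, then $\Gamma$ is inner and $\mathbf{C}=\mathbf{C}_{\Gamma,J_\Lambda}$. Thus the task reduces to proving (i) $\Gamma\in H^\infty(L(\h))$, (ii) $\Lambda\leqslant\Gamma$, and (iii) $\Gamma\leqslant\Theta$.

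For (i) and (ii), using Proposition \ref{js} and the $J_\Lambda$-symmetry of $\Lambda$, a straightforward manipulation of the factorizations gives
\[
\mathbf{C}\,\mathbf{C}_{\Lambda,J_\Lambda}=M_{\Gamma\Lambda^*}.
\]
Since $\mathbf{C}_{\Lambda,J_\Lambda}(K_\Lambda)=K_\Lambda$ and $\mathbf{C}(K_\Lambda)\subset K_\Theta\subset H^2(\h)$, we have $M_{\Gamma\Lambda^*}(K_\Lambda)\subset H^2(\h)$. The crucial observation is that purity of $\Lambda$ (i.e.\ $\|\Lambda(0)\|<1$, so $I-\Lambda(0)\Lambda(0)^*$ is invertible) makes the evaluation at $0$ on $K_\Lambda$ surjective onto $\h$; combined with $S^*$-invariance of $K_\Lambda$ this forces $\bigvee_{n\geqslant 0}z^n K_\Lambda=H^2(\h)$. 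Together with $M_{\Gamma\Lambda^*}\mathbf{M}_z=\mathbf{M}_z M_{\Gamma\Lambda^*}$ this upgrades the inclusion to $M_{\Gamma\Lambda^*}(H^2(\h))\subset H^2(\h)$, so $\Gamma\Lambda^*\in H^\infty(L(\h))$. Then $\Gamma=(\Gamma\Lambda^*)\Lambda\in H^\infty(L(\h))$, which combined with $\Gamma$ being unitary valued on $\mathbb{T}$ makes it inner. Finally, since both $\Lambda$ and $\Gamma$ are $J_\Lambda$-symmetric inner functions, Proposition \ref{stw65} converts $\Gamma\Lambda^*\in H^\infty(L(\h))$ into $\Lambda^*\Gamma\in H^\infty(L(\h))$, i.e.\ $\Lambda\leqslant\Gamma$.

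For (iii) I would introduce the linear unitary operator $M_\mathbf{V}:=\mathbf{C}_{\Theta,J_\Theta}\mathbf{C}$. Since both factors intertwine $\mathbf{M}_z$ and $\mathbf{M}_{\bar z}$, their product commutes with $\mathbf{M}_z$, and the operator-valued Rosenblum--Rovnyak theorem (used already in the proofs of Theorems \ref{8.1} and \ref{8.3}) gives $\mathbf{V}\in L^\infty(L(\h))$ unitary valued. Because $\mathbf{C}_{\Theta,J_\Theta}$ preserves $K_\Theta$, the hypothesis forces $M_\mathbf{V}(K_\Lambda)\subset K_\Theta\subset H^2(\h)$, and the very same span argument as in step (i) gives $\mathbf{V}\in H^\infty(L(\h))$. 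Equating $\mathbf{C}_{\Theta,J_\Theta}M_\mathbf{V}=\mathbf{C}=\mathbf{C}_{\Gamma,J_\Lambda}$ and using the identities $\widetilde{\mathbf{J}_\Theta}M_\mathbf{V}=M_{J_\Theta\mathbf{V}J_\Theta}\widetilde{\mathbf{J}_\Theta}$ and $\widetilde{\mathbf{J}_\Lambda}=\widetilde{\mathbf{J}_\Theta}M_{J_\Theta J_\Lambda}$, a short manipulation gives $\Gamma=\Theta\,J_\Theta\mathbf{V}J_\Lambda$. Taking pointwise linear adjoints then yields $\Gamma^*\Theta=J_\Lambda\mathbf{V}^*J_\Theta$, and by Lemma \ref{lemat61} this lies in $H^\infty(L(\h))$ if and only if $\mathbf{V}\in H^\infty(L(\h))$, which we have already shown. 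Hence $\Gamma\leqslant\Theta$.

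The main obstacle I foresee is engineering the span identity $\bigvee_{n\geqslant 0}z^n K_\Lambda=H^2(\h)$ that underlies both steps (i) and (iii): purity of $\Lambda$ is essential here, since without it the range of evaluation at $0$ on $K_\Lambda$ need not be all of $\h$, and a multiplier that is known to respect $K_\Lambda$ cannot then be promoted to a genuine $H^\infty$ multiplier. The second delicate point is the bookkeeping with two possibly distinct conjugations $J_\Lambda$ and $J_\Theta$ in step (iii), where Lemma \ref{lemat61} is precisely the device that converts the twisted product $J_\Lambda\mathbf{V}^*J_\Theta$ back into an analyticity statement about $\mathbf{V}$ itself.
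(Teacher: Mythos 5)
Your proof is correct and follows essentially the same route as the paper's: the same choice $\Gamma(z)=z\mathbf{U}(z)$, the same auxiliary products $\mathbf{C}\,\mathbf{C}_{\Lambda,J_\Lambda}$ and $\mathbf{C}_{\Theta,J_\Theta}\mathbf{C}$, Proposition \ref{stw65} for $\Lambda\leqslant\Gamma$ and Lemma \ref{lemat61} for $\Gamma\leqslant\Theta$. The only cosmetic difference is that you exploit purity through the span identity $\bigvee_{n\geqslant 0}z^nK_{\Lambda}=H^2(\h)$ (which indeed holds: $g(0)=g-zS^{*}g$ lies in the span for every $g\in K_{\Lambda}$, and $k_0^{\Lambda}x(0)=(I-\Lambda(0)\Lambda(0)^{*})x$ shows evaluation at $0$ is onto $\h$), whereas the paper tests only against the kernels $k_0^{\Lambda}x$ and then divides by the $H^{\infty}$-invertible factor $I-\Lambda(\cdot)\Lambda(0)^{*}$.
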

\begin{proof}
	Assume first that $\mathbf{C}=\mathbf{C}_{\Gamma,J}$, $\Gamma$ is $J$--symmetric and $\Lambda\leqslant \Gamma\leqslant\Theta$. Then $K_{\Lambda}\subset K_{\Gamma}\subset K_{\Theta}$ and
	$$\mathbf{C}(K_{\Lambda})=\mathbf{C}_{\Gamma,J}(K_{\Lambda})\subset \mathbf{C}_{\Gamma,J}(K_{\Gamma})=K_{\Gamma}\subset K_{\Theta}.$$
	
	Assume now that $\mathbf{C}$ is an $\mathbf{M}_z$--conjugation such that $\mathbf{C}(K_{\Lambda})\subset \Kt$.
	By Theorem \ref{8.3} there exist $\mathbf{U}_{\Lambda},\mathbf{U}_{\Theta}\in L^{\infty}(\h)$ such that
	 $$\mathbf{C}=M_{\mathbf{U}_{\Lambda}}\widetilde{\mathbf{J}}_{\Lambda}=\widetilde{\mathbf{J}}_{\Theta}M_{\mathbf{U}_{\Theta}^{*}},$$ the function $\mathbf{U}_{\Lambda}$ is unitary valued and $J_{\Lambda}$--symmetric, and the function $\mathbf{U}_{\Theta}$ is unitary valued and $J_{\Theta}$--symmetric. Moreover, it follows from the Remark \ref{rem49} that
	 \begin{equation}\label{ut}
	   \mathbf{U}_{\Theta}(z)=\mathbf{U}_{\Lambda}(z)J_{\Lambda}J_{\Theta}\quad \text{a.e. on }\mathbb{T}.
	 \end{equation}
 By Lemma \ref{l66}, for $x\in\h$ we have  {a.e. on $\mathbb{T}$,}
	\begin{align*}
	(\mathbf{C}\mathbf{C}_{\Lambda,J_{\Lambda}} k_0^\Lambda x)(z)&
	=\mathbf{U}_{\Lambda}(z)J_{\Lambda}((\mathbf{C}_{\Lambda,J_{\Lambda}} k_0^\Lambda x)(z))\\
	&=\mathbf{U}_{\Lambda}(z)J_{\Lambda}(\bar{z}(\Lambda(z)-\Lambda(0))J_{\Lambda}x)\\
	&=\mathbf{U}_{\Lambda}(z)z(\Lambda(z)^*-\Lambda(0)^*)x\\
	&=\mathbf{U}_{\Lambda}(z)z\Lambda(z)^*(1-\Lambda(z)\Lambda(0)^*)x.
	\end{align*}
	Define
	$$\mathbf{V}_1(z)=\mathbf{U}_{\Lambda}(z)z\Lambda(z)^*\quad \text{and}\quad \mathbf{W}_1(z)=\mathbf{V}_1(z)(1-\Lambda(z)\Lambda(0)^*)$$
	 {(a.e. on $\mathbb{T}$).} Clearly, $\mathbf{V}_1, \mathbf{W}_1\in L^{\infty}(L(\h))$.  {By the calculations above, $M_{\mathbf{W}_1}(e_0x)=\mathbf{C}\mathbf{C}_{\Lambda, J_\Lambda} k_0^\Lambda x\in H^2(\h)$. Since $M_{\mathbf{W}_1}$ commutes with $\mathbf{M}_z$, we also get $M_{\mathbf{W}_1} (e_n x)\in H^2(\h)$ for $n=0,1,\dots$.} Hence $M_{\mathbf{W}_1}(H^2(\h))\subset H^2(\h)$ and $\mathbf{W}_1\in H^{\infty}(L(\h))$.	
	
	{Recalling that $\Lambda$ is a pure analytic function, we have that $z\mapsto (1-\Lambda(z)\Lambda(0)^*)^{-1}$ belongs to $H^{\infty}(L(\h))$. Hence also $\mathbf{V}_1\in H^{\infty}(L(\h))$ and $\mathbf{V}_1$ is inner, since its values are unitary operators.}
	
	On the other hand,  using \eqref{ut} we get  {(a.e. on $\mathbb{T}$)}
	\begin{align*}
	(\mathbf{C}_{\Theta,J_{\Theta}} \mathbf{C} k_0^\Lambda x)(z)&=(\mathbf{C}_{\Theta,J_{\Theta}} \widetilde{\mathbf{J}}_{\Theta}M_{\mathbf{U}_{\Theta}^*}k_0^\Lambda x)(z)\\
	&=\Theta(z)\bar z \mathbf{U}_{\Theta}(z)^* (1-\Lambda(z)\Lambda(0)^*)x\\
	&=\Theta(z)\bar z J_{\Theta}J_{\Lambda}\mathbf{U}_{\Lambda}(z)^*(1-\Lambda(z)\Lambda(0)^*)x.
	\end{align*}
	Define
	$$\mathbf{V}_2(z)=\Theta(z)\bar z J_{\Theta}J_{\Lambda}\mathbf{U}_{\Lambda}(z)^* \quad \text{and}\quad \mathbf{W}_2(z)=\mathbf{V}_2(z)(1-\Lambda(z)\Lambda(0)^*)$$
	 {(a.e. on $\mathbb{T}$).} As above $H^2(\h)$ is $M_{\mathbf{W}_2}$--invariant so $\mathbf{W}_2\in H^{\infty}(L(\h))$, and consequently $\mathbf{V}_2\in H^{\infty}(L(\h))$  is an inner function. Since
	$$\mathbf{V}_2(z)=J_{\Theta}\Theta(z)^* z \mathbf{U}_{\Lambda}(z)J_{\Lambda}\quad  {\text{a.e. on }\mathbb{T})},$$
	it follows from Lemma \ref{lemat61} that $\mathbf{V}_3(z)=\mathbf{U}_{\Lambda}(z)^* \bar z \Theta(z)$  {(a.e. on $\mathbb{T}$)} is also an inner function.
	
	Define $$\Gamma(z)=\mathbf{U}_{\Lambda}(z)z\quad  {\text{a.e. on }\mathbb{T})}.$$ Then $\mathbf{V}_1=\Gamma\Lambda^*\in H^{\infty}(L(\h))$, and so $\Gamma=\mathbf{V}_1\Lambda$ is an inner function. Observe also that $\Gamma$ is $J_{\Lambda}$--symmetric. By Proposition \ref{stw65} we also have $\Lambda^*\Gamma\in H^{\infty}(L(\h))$, and so $\Lambda\leqslant \Gamma$.
	
	Moreover,   $\mathbf{V}_3=\Gamma^*\Theta\in H^{\infty}(L(\h))$ and $\Gamma\leqslant \Theta$. In particular,
	$$\Lambda^*\Theta=\Lambda^*\Gamma\Gamma^*\Theta\in H^{\infty}(L(\h)),$$
	that is, $\Lambda\leqslant \Theta$. Finally,  {a.e. on $\mathbb{T}$},
	$$(\mathbf{C}f)(z)=\mathbf{U}_{\Lambda}(z)J_{\Lambda}(f(z))=\Gamma(z)\bar z J_{\Lambda}(f(z))=(\mathbf{C}_{\Gamma, J_{\Lambda}}f)(z).$$
\end{proof}

\begin{remark}\label{rem6}
	Note that in the proof of Theorem \ref{th66} we actually did not use the fact that $\Theta$ is pure. Moreover, if we assume that $\Lambda$ and $\Theta$ are $J$--symmetric, it follows form the proof that $\Gamma$ can be chosen to be $J$--symmetric as well.
\end{remark}

\begin{remark}\label{rem7}
	Let $\mathbf{C}=\mathbf{C}_{\Gamma, J}$ for an inner function $\Gamma\in H^{\infty}(L(\h))$, which is $J$--symmetric and such that $\Lambda\leqslant \Gamma\leqslant\Theta$. Then for any conjugation $J'$ in $\h$ the function $\Gamma'$ defined by $$\Gamma'(z)=\Gamma(z) J J'\quad {\text{a.e. on }\mathbb{T}}$$
	is $J'$--symmetric, $\Lambda\leqslant\Gamma'\leqslant \Theta$ and $\mathbf{C}=\mathbf{C}_{\Gamma', J'}$.
\end{remark}

\begin{remark}\label{rem8}
	If $\Lambda=\Theta$ is $J$--symmetric and $\mathbf{C}$ is an  $\mathbf{M}_z$--conjugation in $L^2(\h)$ such that $\mathbf{C}(K_{\Theta})\subset \Kt$, then by Theorem \ref{th66} there exists an inner $J$--symmetric function $\Gamma$ such that $\mathbf{C}=\mathbf{C}_{\Gamma, J}$ and $\Theta\leqslant\Gamma\leqslant\Theta$. The last condition implies that $\mathbf{V}=\Gamma\Theta^*$ is a unitary constant and  {a.e. on $\mathbb{T}$,}
	$$(\mathbf{C}f)(z)=\Gamma(z)\bar z J(f(z))=\Gamma(z)\Theta(z)^*\Theta(z)\bar z J(f(z))=(M_{\mathbf{V}}\mathbf{C}_{\Theta,J}f)(z).$$
	Moreover, $\mathbf{V}$ satisfies \eqref{vzero} and $M_{\mathbf{V}}$ is $\mathbf{C}_{\Theta,J}$--symmetric.
\end{remark}

Now  recall that $\mathbf{J}^{\star}$ is an $\mathbf{M}_z$--commuting conjugation. The proposition below shows some basic properties of $\mathbf{J}^{\star}$ as to model spaces.

\begin{proposition}\label{pr77}
  Let $J$ be a conjugation in $\h$ and let $\Theta$ be a $J$--symmetric inner function. Then
\begin{enumerate}
	\item $\mathbf{J}^{\star}M_{\Theta}=M_{\Theta^{\#}}\mathbf{J}^{\star}$;
		\item $\mathbf{J}^{\star}(\Theta H^2(\h))=\Theta^{\#}H^2(\h)$;
				\item $\mathbf{J}^{\star}(K_{\Theta})=K_{\Theta^{\#}}$,
				\item  {$\mathbf{J}^{\star}(k_0^{\Theta}x)=k_0^{\Theta^{\#}}Jx$.}
\end{enumerate}
\end{proposition}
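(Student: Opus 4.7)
The plan is to verify the four items in order, each leaning on the previous one; no new machinery is needed beyond the definitions of $\mathbf{J}^\star$ and $\Theta^{\#}$ together with the $J$--symmetry of $\Theta$. The central bookkeeping identity is that $J$--symmetry, $J\Theta(\lambda)J=\Theta(\lambda)^{*}$ on $\mathbb D$, rewrites (since $J^{2}=I_{\h}$) as $J\Theta(\bar z)=\Theta(\bar z)^{*}J=\Theta^{\#}(z)J$ a.e.\ on $\mathbb T$. I shall also use, as already noted in \eqref{jdef}, that $\mathbf{J}^\star(H^{2}(\h))\subset H^{2}(\h)$, and hence, since $\mathbf{J}^\star$ is involutive, $\mathbf{J}^\star(H^{2}(\h))=H^{2}(\h)$.

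For (1) I would just chase the definition of $\mathbf{J}^\star$ on $M_{\Theta}\f$ and insert $JJ$ between $\Theta(\bar z)$ and $\f(\bar z)$: this turns $J(\Theta(\bar z)\f(\bar z))$ into $\Theta^{\#}(z)\,J(\f(\bar z))$, which is exactly $(M_{\Theta^{\#}}\mathbf{J}^{\star}\f)(z)$. Item (2) then follows at once by applying $\mathbf{J}^\star$ to $\Theta\g$ for $\g\in H^{2}(\h)$: by (1) this equals $\Theta^{\#}\mathbf{J}^{\star}\g\in\Theta^{\#}H^{2}(\h)$, and the reverse inclusion is obtained by applying $\mathbf{J}^\star$ once more and using involutivity together with $\Theta^{\#\#}=\Theta$.

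Item (3) is a formal consequence of (2) and the antilinear isometric character of $\mathbf{J}^{\star}$: orthogonality is preserved because $\langle\mathbf{J}^\star\f,\mathbf{J}^\star\g\rangle=\overline{\langle\f,\g\rangle}$. Applying $\mathbf{J}^\star$ to the decomposition $H^{2}(\h)=K_{\Theta}\oplus\Theta H^{2}(\h)$ and using $\mathbf{J}^\star(H^{2}(\h))=H^{2}(\h)$ together with (2) gives $H^{2}(\h)=\mathbf{J}^\star K_{\Theta}\oplus\Theta^{\#}H^{2}(\h)$, whence $\mathbf{J}^\star K_{\Theta}=K_{\Theta^{\#}}$.

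For (4), I would compute $\mathbf{J}^\star(k_{0}^{\Theta}x)(z)$ directly from $k_{0}^{\Theta}(z)=1-\Theta(z)\Theta(0)^{*}$. After applying $J$ inside, the antilinear rearrangement that needs care is $J\Theta(\bar z)\Theta(0)^{*}=\Theta^{\#}(z)J\Theta(0)^{*}$, combined with the auxiliary identity $J\Theta(0)^{*}=\Theta(0)J$ obtained from $J\Theta(0)J=\Theta(0)^{*}$. Using finally $\Theta^{\#}(0)^{*}=\Theta(0)$ recognises the result as $k_{0}^{\Theta^{\#}}(z)Jx$. The only potentially subtle step is precisely this bookkeeping with $J$ moving past $\Theta(0)^{*}$; everything else is direct substitution.
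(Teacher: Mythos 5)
Your proof is correct. The paper actually states Proposition \ref{pr77} without proof, so there is no argument to compare against line by line; your computations are the natural ones and match the style of the verifications the paper does carry out elsewhere (e.g.\ Lemma \ref{l66} and Lemma \ref{L73}). All the delicate points are handled properly: in (1) the insertion of $JJ$ and the identity $J\Theta(\bar z)J=\Theta(\bar z)^{*}=\Theta^{\#}(z)$; in (4) the commutations $J\Theta(0)^{*}=\Theta(0)J$ and the identification $\Theta^{\#}(0)=\Theta(0)^{*}$, so that $1-\Theta^{\#}(z)\Theta(0)=k_{0}^{\Theta^{\#}}(z)$. Two tiny remarks. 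First, in (2) your ``apply $\mathbf{J}^{\star}$ once more'' step implicitly reruns the forward inclusion for $\Theta^{\#}$, which requires observing that $\Theta^{\#}$ is again $J$--symmetric (true, since $J\Theta(\bar z)^{*}J=(J\Theta(\bar z)J)^{*}=\Theta(\bar z)=\Theta^{\#}(z)^{*}$); alternatively, the equality in (2) follows at once from the operator identity in (1) together with the surjectivity $\mathbf{J}^{\star}(H^{2}(\h))=H^{2}(\h)$, with no separate reverse inclusion needed. Second, in (3) it is worth saying explicitly that $\mathbf{J}^{\star}$, being an antilinear surjective isometry, maps the orthogonal complement of $\Theta H^{2}(\h)$ in $H^{2}(\h)$ onto the orthogonal complement of its image $\Theta^{\#}H^{2}(\h)$ in $H^{2}(\h)$ --- which is exactly what you invoke. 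Neither remark is a gap.
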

 In what follows we describe all $\mathbf{M}_z$--commuting conjugations mapping one model space into another.
\begin{theorem}\label{th78}
	Let $\Theta,\Lambda\in H^{\infty}(L(\h))$ be two pure inner functions and assume that there exist conjugations $J_{\Theta}$, $J_{\Lambda}$ in $\h$ such that $\Theta$ is $J_{\Theta}$--symmetric and $\Lambda$ is $J_{\Lambda}$--symmetric. Assume that  $\mathbf{C}$ is an $\mathbf{M}_z$--commuting conjugation in $L^2(\h)$. Then $\mathbf{C}(K_{\Lambda})\subset \Kt$ if and only if there is a unitary $J_{\Lambda}$--symmetric operator  $U_0\in L(\h)$ such that $\mathbf{C}={M}_{\mathbf{U}_\Lambda} \mathbf{J}_\Lambda^{\star}$, where $\mathbf{U}_\Lambda$ is a constant operator valued function, $\mathbf{U}_\Lambda(z)=U_0$ for almost all $z\in\mathbb{T}$ and  $U_0\Lambda^{\#}\leqslant \Theta$.
\end{theorem}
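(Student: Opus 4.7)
The plan is to use Theorem \ref{8.1} to write $\mathbf{C}=M_{\mathbf{U}_\Lambda}\mathbf{J}_\Lambda^\star$, and then combine a reproducing-kernel computation (in the spirit of Theorem \ref{66}) with Proposition \ref{pr77} to upgrade the assumed invariance of $K_\Lambda$ to invariance of the whole Hardy space $H^2(\h)$. Once this is in place, Theorem \ref{8.2} immediately forces the symbol $\mathbf{U}_\Lambda$ to be a constant unitary $J_\Lambda$-symmetric operator, and Lemma \ref{lemat64} translates the resulting inclusion into the divisibility condition.

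For the ``only if'' direction, apply Theorem \ref{8.1} with $J=J_\Lambda$ to write $\mathbf{C}=M_{\mathbf{U}_\Lambda}\mathbf{J}_\Lambda^\star$, where $\mathbf{U}_\Lambda\in L^\infty(L(\h))$ is unitary valued and $M_{\mathbf{U}_\Lambda}$ is $\mathbf{J}_\Lambda^\star$-symmetric. Using Proposition \ref{pr77}(4), for each $x\in\h$,
$$\mathbf{C}(k_0^\Lambda x)=M_{\mathbf{U}_\Lambda}(k_0^{\Lambda^\#}J_\Lambda x)=\mathbf{U}_\Lambda(\cdot)\bigl(I-\Lambda^\#(\cdot)\Lambda(0)\bigr)J_\Lambda x\in K_\Theta\subset H^2(\h).$$
Setting $\mathbf{W}_1(z)=\mathbf{U}_\Lambda(z)(I-\Lambda^\#(z)\Lambda(0))$, the commutativity of $M_{\mathbf{W}_1}$ with $\mathbf{M}_z$ propagates the inclusion $\mathbf{W}_1\h\subset H^2(\h)$ to $M_{\mathbf{W}_1}(H^2(\h))\subset H^2(\h)$, so $\mathbf{W}_1\in H^\infty(L(\h))$. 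Since $\Lambda$ is pure we have $\|\Lambda^\#(z)\Lambda(0)\|\leqslant\|\Lambda(0)\|<1$ on $\overline{\mathbb{D}}$, hence $(I-\Lambda^\#\Lambda(0))^{-1}\in H^\infty(L(\h))$ by a Neumann series, and therefore $\mathbf{U}_\Lambda\in H^\infty(L(\h))$. Next, Proposition \ref{pr77}(1) and the $J_\Lambda$-symmetry of $\Lambda$ give, for every $\f\in H^2(\h)$,
$$\mathbf{C}(\Lambda\f)=M_{\mathbf{U}_\Lambda}M_{\Lambda^\#}\mathbf{J}_\Lambda^\star\f\in H^2(\h),$$
because $\mathbf{U}_\Lambda,\Lambda^\#\in H^\infty(L(\h))$ and $\mathbf{J}_\Lambda^\star\f\in H^2(\h)$. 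Combined with the hypothesis $\mathbf{C}(K_\Lambda)\subset K_\Theta\subset H^2(\h)$ and the decomposition $H^2(\h)=K_\Lambda\oplus\Lambda H^2(\h)$, this forces $\mathbf{C}(H^2(\h))\subset H^2(\h)$. Theorem \ref{8.2} then yields $\mathbf{U}_\Lambda(z)=U_0$ a.e. on $\mathbb{T}$ for some unitary $J_\Lambda$-symmetric $U_0\in L(\h)$, and by Proposition \ref{pr77}(3),
$$\mathbf{C}(K_\Lambda)=M_{U_0}(K_{\Lambda^\#})=K_{U_0\Lambda^\#}\subset K_\Theta,$$
which by Lemma \ref{lemat64} is equivalent to $U_0\Lambda^\#\leqslant\Theta$.

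The converse is obtained by reading the same chain backwards: if $U_0$ is a unitary $J_\Lambda$-symmetric operator in $L(\h)$ with $U_0\Lambda^\#\leqslant\Theta$, then $U_0\Lambda^\#$ is inner, Proposition \ref{p2}(2) together with Theorem \ref{8.1} guarantees that $\mathbf{C}=M_{U_0}\mathbf{J}_\Lambda^\star$ is an $\mathbf{M}_z$-commuting conjugation, and Proposition \ref{pr77}(3) with Lemma \ref{lemat64} give $\mathbf{C}(K_\Lambda)=K_{U_0\Lambda^\#}\subset K_\Theta$. I expect the main obstacle to be the first step of the ``only if'' direction, namely extracting analyticity of $\mathbf{U}_\Lambda$ from the single inclusion $\mathbf{C}(K_\Lambda)\subset K_\Theta$; the purity of $\Lambda$ (which ensures the invertibility of $I-\Lambda^\#\Lambda(0)$ in $H^\infty(L(\h))$) is precisely what makes the reproducing-kernel identity at $\lambda=0$ sufficient, after which Proposition \ref{pr77}(1) propagates invariance to $H^2(\h)$ so that Theorem \ref{8.2} can be invoked.
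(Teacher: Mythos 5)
Your proof is correct and follows essentially the same route as the paper: Theorem \ref{8.1} to write $\mathbf{C}=M_{\mathbf{U}_\Lambda}\mathbf{J}_\Lambda^\star$, the reproducing kernel at $0$ together with purity of $\Lambda$ to get $\mathbf{U}_\Lambda\in H^{\infty}(L(\h))$ and hence invariance of $H^2(\h)$, Theorem \ref{8.2} to force $\mathbf{U}_\Lambda$ constant, and Proposition \ref{pr77}(3) with Lemma \ref{lemat64} for the divisibility condition. The only (harmless) deviations are that you apply $\mathbf{C}$ directly to $k_0^\Lambda x$ rather than to $\mathbf{C}_{\Lambda,J_\Lambda}\widetilde{k_0^\Lambda}x$ as the paper does, and your detour through $H^2(\h)=K_\Lambda\oplus\Lambda H^2(\h)$ is redundant once $\mathbf{U}_\Lambda\in H^{\infty}(L(\h))$ is known, since $\mathbf{J}_\Lambda^\star$ already preserves $H^2(\h)$.
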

\begin{proof}
Since $\mathbf{C}$ is $\mathbf{M}_z$--commuting, then by Theorem \ref{8.1} there is $\mathbf{U}_\Lambda\in L^\infty(L(\h))$ such that $\mathbf{U}_\Lambda(z)$ is a unitary operator for almost all $z\in\mathbb{T}$, ${M}_{\mathbf{U}_\Lambda}$ is $\mathbf{J}_\Lambda^\star$--symmetric and $\mathbf{C}={M}_{\mathbf{U}_\Lambda}\mathbf{J}_\Lambda^\star$.
Note that using Lemma \ref{l66} and Proposition \ref{pr77}~(4)  {a.e on $\mathbb{T}$} we have
\begin{equation}\label{r77}
\begin{split}
  (\mathbf{C}\mathbf{C}_{\Lambda,J_\Lambda}\,\widetilde{k_0^\Lambda} x)(z)&= \mathbf{U}_\Lambda(z)\mathbf{J}^\star_\Lambda(k_0^\Lambda J_\Lambda x)(z)\\
  &=  \mathbf{U}_\Lambda(z)(1-\Lambda(z)^{\#}(\Lambda(0)^{\#})^*)x.
  \end{split}
\end{equation} Define $\mathbf{V}(z)= \mathbf{U}_\Lambda(z)(1-\Lambda(z)^{\#}(\Lambda(0)^{\#})^*)$  {a.e on $\mathbb{T}$}. It is clear that $\mathbf{V}\in L^\infty(L(\h))$ and by \eqref{r77} we have that  {$M_{\mathbf{V}}(e_0x)=\mathbf{C}\mathbf{C}_{\Lambda,J_\Lambda}\,\widetilde{k_0^\Lambda} x\in H^2(\h)$ because  $\mathbf{C}(K_{\Lambda})\subset \Kt$. Since $M_{\mathbf{V}}$ commutes with $\mathbf{M}_z$, we have also that $M_{\mathbf{V}}(e_n x)\in H^2(\h)$ for $n=1,2,\dots$.} Hence
$M_{\mathbf{V}}(H^2(\h))\subset H^2(\h)$ and $\mathbf{V}\in H^\infty(L(\h))$.
{Since $\Lambda$ is pure, it follows that $z\mapsto (1-\Lambda(z)^{\#}(\Lambda(0)^{\#})^*)^{-1}\in H^\infty(L(\h))$. By \eqref{r77} we obtain that}
 {$$\mathbf{U}_\Lambda =\mathbf{V} (1-\Lambda^{\#}(\Lambda(0)^{\#})^*)^{-1}\in H^\infty(L(\h)),$$}
which implies that $\mathbf{C}$ leaves $H^2(\h)$ invariant.
Applying Theorem \ref{8.2} we get that $\mathbf{U}_\Lambda$ is a constant operator valued function and $\mathbf{U}_\Lambda(z)=U_0$ a.e. on $\mathbb{T}$. Since $\mathbf{U}_\Lambda$ is $\mathbf{J}_{\Lambda}^\star$--symmetric thus $U_0$ is $J$--symmetric by Proposition \ref{p2}.

Note that by Proposition \ref{pr77}~(3), we have $$\mathbf{C}(K_\Lambda)=U_0 K_{\Lambda^{\#}}.$$
Since $U_0$ is unitary, then $U_0 K_{\Lambda^{\#}}=K_{U_0\Lambda^{\#}}$.
Hence by Lemma \ref{lemat64} $$\mathbf{C}(K_\Lambda) \subset K_\Theta$$
 only if $U_0\Lambda^{\#}\leqslant \Theta$.
\end{proof}
\begin{remark}\label{th79}
Consider  a pure inner function $\Theta\in H^{\infty}(L(\h))$ and  a conjugation $J_{\Theta}$ in $\h$ such that $\Theta$ is $J_{\Theta}$--symmetric. As a consequence of Theorem \ref{th78} we have that if  $\mathbf{C}$ is an $\mathbf{M}_z$--commuting conjugation in $L^2(\h)$,  then $\mathbf{C}(\Kt)\subset {K}_{\Theta^{\#}}$ if and only if there is a unitary operator  $U_0\in L(\h)$ such that $\mathbf{C}={M}_{\mathbf{U}_\Theta} \mathbf{J}_\Theta^{\star}$, where $\mathbf{U}_\Theta$ is a constant $\mathbf{J}_\Theta^\star$--symmetric operator valued function, $\mathbf{U}_\Theta(z)=U_0$ for almost all $z\in\mathbb{T}$ and $U_0\Theta^{\#}\leqslant \Theta^{\#}$. Note that if $U_0$ commutes with $\Theta$, the last condition is always satisfied, since then $(U_0\Theta^{\#})^*\Theta^{\#}=U_0^*\in H^\infty(L(\h))$.
\end{remark}

\section{Conjugations and shift invariant subspaces}

Let $dim\, \h<\infty$ and let $\Theta,\Lambda\in H^{\infty}(L(\h))$ be two inner functions. For any fixed conjugation $J$ in $\h$ define
$$\mathbf{C}_J^{\Lambda,\Theta}=M_{\Theta}\mathbf{J}^{\star}M_{\Lambda^*}.$$
Clearly, $\mathbf{C}_J^{\Lambda,\Theta}$ is an antilinear isometry. Moreover, it is easy to see that $\mathbf{C}_J^{\Lambda,\Theta}$ maps $\Lambda H^2(\h)$ onto $\Theta H^2(\h)$.

\begin{proposition}	\label{stw71}
The antilinear operator $\mathbf{C}_J^{\Lambda,\Theta}$ is an involution (and hence a conjugation in $L^2(\h)$) if and only if
\begin{equation}\label{x1}
\Theta(z)J\Lambda^{\#}(z)=\Lambda(z)J\Theta^{\#}(z)\quad\text{a.e. on }\mathbb{T}.
\end{equation}
\end{proposition}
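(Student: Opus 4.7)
Note first that the text preceding the statement has already observed that $\mathbf{C}_J^{\Lambda,\Theta}=M_{\Theta}\mathbf{J}^{\star}M_{\Lambda^*}$ is an antilinear isometry on $L^2(\h)$ (as a composition of the unitaries $M_\Theta$, $M_{\Lambda^*}$ with the conjugation $\mathbf{J}^\star$). Consequently, the only remaining condition that turns $\mathbf{C}_J^{\Lambda,\Theta}$ into a conjugation is the involutive property $(\mathbf{C}_J^{\Lambda,\Theta})^2=I$. My strategy is therefore to compute $(\mathbf{C}_J^{\Lambda,\Theta})^2$ pointwise and translate the equality $(\mathbf{C}_J^{\Lambda,\Theta})^2=I$ directly into the identity \eqref{x1}.

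\textbf{Step 1: one application.} For $\f\in L^2(\h)$, unwinding the three factors in $\mathbf{C}_J^{\Lambda,\Theta}$ using \eqref{jg} gives, a.e. on $\mathbb{T}$,
\begin{equation*}
(\mathbf{C}_J^{\Lambda,\Theta}\f)(z)=\Theta(z)\,J\!\left(\Lambda(\bar z)^*\f(\bar z)\right).
\end{equation*}

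\textbf{Step 2: the iterate.} Applying $\mathbf{C}_J^{\Lambda,\Theta}$ once more and being careful about antilinearity of $J$ (note that $J\circ A\circ J$ is linear for any linear $A$ on $\h$, and $J^2=I_\h$), I obtain, a.e. on $\mathbb{T}$,
\begin{equation*}
\bigl((\mathbf{C}_J^{\Lambda,\Theta})^2\f\bigr)(z)=\Theta(z)\,J\,\Lambda(\bar z)^*\,\Theta(\bar z)\,J\,\Lambda(z)^*\f(z).
\end{equation*}
Hence $(\mathbf{C}_J^{\Lambda,\Theta})^2=I$ is equivalent to the operator identity
\begin{equation}\label{plan1}
\Theta(z)\,J\,\Lambda(\bar z)^*\,\Theta(\bar z)\,J\,\Lambda(z)^*=I_{\h}\quad\text{a.e. on }\mathbb{T}.
\end{equation}

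\textbf{Step 3: reduction to \eqref{x1}.} Using the definitions $\Lambda^{\#}(z)=\Lambda(\bar z)^*$ and $\Theta^{\#}(z)=\Theta(\bar z)^*$, one has $\Theta(\bar z)=(\Theta^{\#}(z))^*$, so \eqref{plan1} reads
\begin{equation*}
\Theta(z)\,J\,\Lambda^{\#}(z)\,(\Theta^{\#}(z))^*\,J\,\Lambda(z)^*=I_{\h}.
\end{equation*}
Since $\Lambda(z)$ and $\Theta^{\#}(z)$ are unitary on $\h$ for a.e.\ $z\in\mathbb{T}$ (both $\Lambda$ and $\Theta^{\#}$ are inner) and $J^2=I_\h$, multiplying on the right by $\Lambda(z)\,J\,\Theta^{\#}(z)$ turns this equation into
\begin{equation*}
\Theta(z)\,J\,\Lambda^{\#}(z)=\Lambda(z)\,J\,\Theta^{\#}(z)\quad\text{a.e. on }\mathbb{T},
\end{equation*}
which is exactly \eqref{x1}. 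The reverse implication follows by reading the same chain of equalities backwards.

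\textbf{Anticipated difficulty.} The only real subtlety is keeping track of antilinearity when iterating $\mathbf{C}_J^{\Lambda,\Theta}$ — in particular, recognizing that the two embedded $J$'s combine via $J(A(Jv))=(JAJ)v$ to produce the linear operator $J\,\Lambda(\bar z)^*\,\Theta(\bar z)\,J$ between the two factors $\Theta(z)$ and $\Lambda(z)^*$. Once that pointwise formula for $(\mathbf{C}_J^{\Lambda,\Theta})^2$ is in hand, the remaining manipulation is a short algebraic consequence of the unitarity of the boundary values of $\Lambda$ and $\Theta^{\#}$.
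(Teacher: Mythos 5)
Your proof is correct and follows essentially the same route as the paper: both arguments reduce the involution condition to a pointwise identity of antilinear operator-valued functions, using the explicit formula for $\mathbf{J}^{\star}$ together with the a.e.\ unitarity of the boundary values of $\Theta$ and $\Lambda$. The only difference is bookkeeping — the paper first rewrites $(\mathbf{C}_J^{\Lambda,\Theta})^2=I$ as the operator identity $M_{\Theta}\mathbf{J}^{\star}M_{\Lambda^*}=M_{\Lambda}\mathbf{J}^{\star}M_{\Theta^*}$ (equating $\mathbf{C}_J^{\Lambda,\Theta}$ with its inverse) and then passes to symbols, whereas you square pointwise first and then perform the equivalent cancellation by multiplying by $\Lambda(z)J\Theta^{\#}(z)$.
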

\begin{proof}
	We have
	$$(\mathbf{C}_J^{\Lambda,\Theta})^2=M_{\Theta}\mathbf{J}^{\star}M_{\Lambda^*}M_{\Theta}\mathbf{J}^{\star}M_{\Lambda^*}=I_{L^2(\h)}$$
	if and only if
	\begin{equation}\label{x2}
	M_{\Theta}\mathbf{J}^{\star}M_{\Lambda^*}=M_{\Lambda}\mathbf{J}^{\star}M_{\Theta^*}.
	\end{equation}
	Since for $\f\in L^2(\h)$  {a.e. on $\mathbb{T}$,}
	$$(M_{\Theta}\mathbf{J}^{\star}M_{\Lambda^*}\f)(z)=\Theta(z)J(\Lambda(\bar z)^*\f(\bar z))=(M_{\Theta J\Lambda^{\#}}\widetilde{\mathbf{J}}\mathbf{J}^{\star}\f)(z)$$
	and
	$$(M_{\Lambda}\mathbf{J}^{\star}M_{\Theta^*}\f)(z)=\Lambda(z)J(\Theta(\bar z)^*\f(\bar z))=(M_{\Lambda J\Theta^{\#}}\widetilde{\mathbf{J}}\mathbf{J}^{\star}\f)(z),$$
	we see that \eqref{x2} is equivalent to \eqref{x1}.
\end{proof}

For $\mathbf{F}\in L^{\infty}(L(\h))$ and a conjugation $J$ in $\h$ define $$\mathbf{F}_J(z)=J\mathbf{F}(\bar z)J,\quad  {\text{a.e. on }\mathbb{T}}.$$
Clearly, $\mathbf{F}_J\in  L^{\infty}(L(\h))$. Moreover, $\mathbf{F}_J\in  H^{\infty}(L(\h))$ if and only if $\mathbf{F}\in H^{\infty}(L(\h))$ (see {the proof of} Lemma \ref{lemat61}), and  $\mathbf{F}_J$ is an inner function if and only if $\mathbf{F}$ is. It is easy to verify the following.

\begin{lemma}\label{L72}
	Let $\mathbf{F},\mathbf{G}\in L^{\infty}(L(\h))$ and let $J$ be a conjugation in $\h$. Then
	\begin{enumerate}
		\item $(\mathbf{F}_J)_J=\mathbf{F}$;		
		\item $(\mathbf{F}\mathbf{G})_J=\mathbf{F}_J\mathbf{G}_J$;
		\item  {$(\mathbf{F}_J)^{*}(z)=J\mathbf{F}^{\#}(z)J=(\mathbf{F}^{*})_J(z)$ a.e. on $\mathbb{T}$};
		\item  {$(\mathbf{F}_J)^{\#}(z)=J\mathbf{F}^{*}(z)J=(\mathbf{F}^{\#})_J(z)$ a.e. on $\mathbb{T}$}.
	\end{enumerate}
\end{lemma}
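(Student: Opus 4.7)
All four statements are pointwise identities between operators in $L(\h)$, so the plan is to verify them at almost every $z\in\mathbb{T}$ using only the definition $\mathbf{F}_J(z)=J\mathbf{F}(\bar z)J$, the involutive property $J^2=I_{\h}$, and the relation $\mathbf{F}^{\#}(z)=\mathbf{F}(\bar z)^{*}$. No deep input is needed; the work is purely algebraic.

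For (1), I simply substitute the definition twice and use $J^{2}=I_{\h}$ to get $(\mathbf{F}_J)_J(z)=J\mathbf{F}_J(\bar z)J=J\,J\mathbf{F}(z)J\,J=\mathbf{F}(z)$ a.e. on $\mathbb{T}$. For (2), I insert $J^{2}=I_{\h}$ between the two factors:
\[(\mathbf{F}\mathbf{G})_J(z)=J\mathbf{F}(\bar z)\mathbf{G}(\bar z)J=\bigl(J\mathbf{F}(\bar z)J\bigr)\bigl(J\mathbf{G}(\bar z)J\bigr)=\mathbf{F}_J(z)\mathbf{G}_J(z).\]

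For (3) and (4), the key step is to verify the identity $(JAJ)^{*}=JA^{*}J$ for every linear $A\in L(\h)$. This follows from the fact that $J^{\sharp}=J$, so that $\langle Jx,y\rangle=\overline{\langle x,Jy\rangle}$; unwinding $\langle JAJf,g\rangle$ via this relation twice (once to move $J$ past the outer pairing and once more after using linear adjointness of $A$) yields $\langle JAJf,g\rangle=\langle f,JA^{*}Jg\rangle$, which is the claim. Applying this with $A=\mathbf{F}(\bar z)$ gives $(\mathbf{F}_J)^{*}(z)=J\mathbf{F}(\bar z)^{*}J=J\mathbf{F}^{\#}(z)J$, and on the other hand $(\mathbf{F}^{*})_J(z)=J\mathbf{F}^{*}(\bar z)J=J\mathbf{F}(\bar z)^{*}J$, proving (3). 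For (4), I compute $(\mathbf{F}_J)^{\#}(z)=\mathbf{F}_J(\bar z)^{*}=(J\mathbf{F}(z)J)^{*}=J\mathbf{F}(z)^{*}J$, while $(\mathbf{F}^{\#})_J(z)=J\mathbf{F}^{\#}(\bar z)J=J\mathbf{F}(z)^{*}J$, so they agree.

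There is no real obstacle here; the only place that requires care is the auxiliary identity $(JAJ)^{*}=JA^{*}J$, since $J$ is antilinear and one must keep track of complex conjugates when sliding $J$ through the inner product. Once this identity is in hand, parts (3) and (4) reduce to a direct substitution.
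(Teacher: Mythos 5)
Your proof is correct and is exactly the routine pointwise verification the paper has in mind (the paper omits the proof entirely, stating only that the lemma is "easy to verify"). The one nontrivial ingredient, the identity $(JAJ)^{*}=JA^{*}J$ for a conjugation $J$, is handled properly via $\langle Jx,y\rangle=\overline{\langle x,Jy\rangle}$, so nothing is missing.
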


Moreover, from the proof of Proposition \ref{p2} we get:

\begin{lemma}\label{L73}
		Let $\mathbf{F}\in L^{\infty}(L(\h))$ and let $J$ be a conjugation in $\h$. Then
	$$\mathbf{J}^{\star}M_{\mathbf{F}}\mathbf{J}^{\star}=M_{\mathbf{F}_J}.$$
\end{lemma}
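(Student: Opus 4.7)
The plan is to verify the identity by direct pointwise computation on an arbitrary $\f\in L^2(\h)$, unwinding the definitions of $\mathbf{J}^{\star}$ and $M_{\mathbf{F}}$ exactly as done in the proof of Proposition \ref{p2}. In fact, the required calculation has already been carried out in that proof in equation \eqref{ej2}, so the statement is essentially a restatement of that intermediate line in the language of the operator $M_{\mathbf{F}_J}$.

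Concretely, I would fix $\f\in L^2(\h)$ and compute, a.e.\ on $\mathbb{T}$,
\begin{equation*}
(\mathbf{J}^{\star}M_{\mathbf{F}}\mathbf{J}^{\star}\f)(z)
=J\bigl((M_{\mathbf{F}}\mathbf{J}^{\star}\f)(\bar z)\bigr)
=J\bigl(\mathbf{F}(\bar z)(\mathbf{J}^{\star}\f)(\bar z)\bigr)
=J\mathbf{F}(\bar z)J\,\f(z),
\end{equation*}
where the first equality uses the definition \eqref{jg} of $\mathbf{J}^{\star}$, the second uses the definition of $M_{\mathbf{F}}$, and the third uses $(\mathbf{J}^{\star}\f)(\bar z)=J(\f(z))$, which is again \eqref{jg} applied at the point $\bar z$. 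By the definition $\mathbf{F}_J(z)=J\mathbf{F}(\bar z)J$, the right-hand side equals $\mathbf{F}_J(z)\f(z)=(M_{\mathbf{F}_J}\f)(z)$.

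Since this holds for every $\f\in L^2(\h)$ and a.e.\ $z\in\mathbb{T}$, we conclude $\mathbf{J}^{\star}M_{\mathbf{F}}\mathbf{J}^{\star}=M_{\mathbf{F}_J}$ as bounded linear operators on $L^2(\h)$. There is no genuine obstacle here: the only subtlety worth flagging is that $\mathbf{F}_J$ lies in $L^\infty(L(\h))$, so $M_{\mathbf{F}_J}$ is a well-defined bounded operator, and the identity shows it coincides with the (manifestly bounded) composition $\mathbf{J}^{\star}M_{\mathbf{F}}\mathbf{J}^{\star}$.
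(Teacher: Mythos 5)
Your computation is exactly the one the paper relies on: the paper proves this lemma simply by pointing back to the identity \eqref{ej2} established in the proof of Proposition \ref{p2}, which is the same pointwise calculation $(\mathbf{J}^{\star}M_{\mathbf{F}}\mathbf{J}^{\star}\f)(z)=J\mathbf{F}(\bar z)J(\f(z))=\mathbf{F}_J(z)\f(z)$ that you carry out. The proposal is correct and takes essentially the same approach.
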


\begin{remark}\label{uw73}
	Note that the condition \eqref{x1} can be expressed as
	\begin{equation}\label{x3}
	\Theta\Lambda_J^{*}=\Lambda\Theta_J^{*}.
	\end{equation}
	Indeed, \eqref{x1} is equivalent to
	$$\Theta(z)J\Lambda^{\#}(z)J=\Lambda(z)J\Theta^{\#}(z)J\quad  {\text{a.e. on }\mathbb{T}},$$
	and by Lemma \ref{L72}\,(3),  {for almost all $z\in\mathbb{T}$,} $J\Lambda^{\#}(z)J=\Lambda_J^{*}(z)$ and $J\Theta^{\#}(z)J=\Theta_J^{*}(z)$.
\end{remark}

\begin{remark}
	If $\h=\mathbb{C}$ and $J(w)=\bar w$, $w\in\mathbb{C}$, then for $\varphi\in L^{\infty}(\mathbb{T})$ we have $$\varphi_J=\varphi^{\#}.$$ Moreover, for scalar inner functions $\theta$ and $\alpha$ the condition \eqref{x1} (or \eqref{x3}) takes form $$\theta(z)\alpha(\bar z)=\alpha(z)\theta(\bar z)\quad {\text{a.e. on }\mathbb{T}},$$ which in this case is equivalent to $\theta\theta^{\#}=\alpha\alpha^{\#}$ (see \cite[Theorem 5.2]{CKLP}).
\end{remark}

\begin{theorem}\label{75}
	Let $\Theta,\Lambda\in H^{\infty}(L(\h))$ be two inner functions and let $J$ be a conjugation in $\h$. There exists an  $\mathbf{M}_z$--commuting conjugation $\mathbf{C}$ in $L^2(\h)$ such that $\mathbf{C}(\Lambda H^2(\h))\subset \Theta H^2(\h)$  if and only if there is an inner function $\Psi\in H^{\infty}(L(\h))$ such that
	\begin{equation}\label{x4}
	(\Psi\Lambda^{*}\Theta)_J=(\Psi\Lambda^{*}\Theta)^{*}.
	\end{equation}
	In that case, $\mathbf{C}=\mathbf{C}_J^{\Lambda,\Gamma}$ for some inner function $\Gamma\in H^{\infty}(L(\h))$ such that $\Theta\leqslant \Gamma$ and
	 	\begin{equation}\label{x5}
	 	\Gamma\Lambda_J^{*}=\Lambda\Gamma_J^{*}.
	 	\end{equation}
\end{theorem}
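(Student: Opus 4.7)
The plan is to invoke Theorem \ref{8.1} to write any candidate $\mathbf{M}_z$--commuting conjugation as $\mathbf{C}=M_{\mathbf{U}}\mathbf{J}^{\star}$ with $\mathbf{U}\in L^{\infty}(L(\h))$ unitary valued and satisfying $\mathbf{U}_J=\mathbf{U}^{*}$ (the $\mathbf{J}^{\star}$--symmetry of $M_{\mathbf{U}}$, by Proposition \ref{p2}). The invariance condition will then translate, via Lemma \ref{L73}, into a factorization requirement on $\mathbf{U}\Lambda_J$; Lemma \ref{lemat62} produces the desired inner piece, and the symmetry $\mathbf{U}_J=\mathbf{U}^{*}$ will yield \eqref{x4} through the calculus of Lemma \ref{L72}.

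For the necessity direction, write $\mathbf{C}=M_{\mathbf{U}}\mathbf{J}^{\star}$ as above. Using $\mathbf{J}^{\star}M_{\Lambda}=M_{\Lambda_J}\mathbf{J}^{\star}$ (Lemma \ref{L73}) and $\mathbf{J}^{\star}(H^{2}(\h))=H^{2}(\h)$, the hypothesis $\mathbf{C}(\Lambda H^{2}(\h))\subset \Theta H^{2}(\h)$ becomes $M_{\mathbf{U}\Lambda_{J}}(H^{2}(\h))\subset \Theta H^{2}(\h)$. Lemma \ref{lemat62} then supplies $\Psi_{0}\in H^{\infty}(L(\h))$ with $\mathbf{U}\Lambda_{J}=\Theta\Psi_{0}$; unitary--valuedness of $\mathbf{U}\Lambda_{J}$ and of $\Theta$ forces $\Psi_{0}=\Theta^{*}\mathbf{U}\Lambda_{J}$ to be unitary valued as well, hence inner. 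Setting $\Gamma=\Theta\Psi_{0}$ gives $\Theta\leqslant \Gamma$, $\mathbf{U}=\Gamma\Lambda_{J}^{*}$, and, via another use of Lemma \ref{L73}, $\mathbf{C}=M_{\Gamma}\mathbf{J}^{\star}M_{\Lambda^{*}}=\mathbf{C}_{J}^{\Lambda,\Gamma}$. Expanding $\mathbf{U}_{J}=\mathbf{U}^{*}$ with Lemma \ref{L72} produces $\Gamma_{J}\Lambda^{*}=\Lambda_{J}\Gamma^{*}$, whose adjoint is \eqref{x5}. Finally, setting $\Psi=(\Psi_{0})_{J}$ (inner, since $(\cdot)_{J}$ preserves innerness), one applies $(\cdot)_{J}$ to the identity $\Theta_{J}\Psi\Lambda^{*}=\Lambda_{J}\Psi_{0}^{*}\Theta^{*}$ (which is $\mathbf{U}_{J}=\mathbf{U}^{*}$ rewritten in terms of $\Psi$) and multiplies on the left by $\Theta^{*}$ and on the right by $\Theta_{J}$ to reach $\Psi_{J}\Lambda_{J}^{*}\Theta_{J}=\Theta^{*}\Lambda\Psi^{*}$, which is exactly \eqref{x4}.

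For the sufficiency direction, given an inner $\Psi$ satisfying \eqref{x4}, I set $\Psi_{0}=\Psi_{J}$ (inner), $\Gamma=\Theta\Psi_{0}$, and $\mathbf{U}=\Gamma\Lambda_{J}^{*}=\Theta\Psi_{J}\Lambda_{J}^{*}$. Taking the adjoint of \eqref{x4} and rearranging reverses the previous manipulation to give $\Theta_{J}\Psi\Lambda^{*}=\Lambda_{J}\Psi_{J}^{*}\Theta^{*}$, i.e.\ $\mathbf{U}_{J}=\mathbf{U}^{*}$, so $M_{\mathbf{U}}$ is $\mathbf{J}^{\star}$--symmetric and $\mathbf{C}:=M_{\mathbf{U}}\mathbf{J}^{\star}$ is an $\mathbf{M}_{z}$--commuting conjugation by Theorem \ref{8.1}. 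The inclusion $\mathbf{C}(\Lambda H^{2}(\h))=M_{\mathbf{U}\Lambda_{J}}(H^{2}(\h))=\Gamma H^{2}(\h)\subset \Theta H^{2}(\h)$ is then immediate, and the identification $\mathbf{C}=\mathbf{C}_{J}^{\Lambda,\Gamma}$ with $\Theta\leqslant\Gamma$ and \eqref{x5} follows exactly as in the previous paragraph.

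The delicate point I anticipate is the bookkeeping of the various $J$--twisted dualities $\mathbf{F}^{*}$, $\mathbf{F}^{\#}$, $\mathbf{F}_{J}$, $(\mathbf{F}_{J})^{*}=(\mathbf{F}^{*})_{J}$, compounded by the noncommutative factor order in the matrix--valued setting. In particular, the inner function $\Psi_{0}$ that arises naturally from Lemma \ref{lemat62} is \emph{not} the $\Psi$ of \eqref{x4}; the two are related by the $J$--twist $\Psi=(\Psi_{0})_{J}$. Recognising this twist is the conceptual step that converts the symmetry relation $\mathbf{U}_{J}=\mathbf{U}^{*}$ into the clean self--duality $\Phi_{J}=\Phi^{*}$ for $\Phi=\Psi\Lambda^{*}\Theta$; once this renaming is in place, everything else reduces to routine applications of Lemma \ref{L72}.
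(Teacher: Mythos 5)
Your proof is correct and follows essentially the same route as the paper: both directions rest on Theorem \ref{8.1}, the factorization Lemma \ref{lemat62}, and the $(\cdot)_J$ calculus of Lemmas \ref{L72}--\ref{L73}, and you arrive at the same $\Gamma=\Theta\Psi_J=\mathbf{U}\Lambda_J$ and the same $\Psi$. The only cosmetic difference is that you factor $\mathbf{U}\Lambda_J$ through $\Theta$ (working with $\mathbf{C}=M_{\mathbf{U}}\mathbf{J}^{\star}$) whereas the paper factors $\mathbf{U}^{*}\Lambda$ through $\Theta_J$ (working with $\mathbf{C}=\mathbf{J}^{\star}M_{\mathbf{U}^{*}}$), so your $\Psi=(\Psi_0)_J$ is exactly the paper's $\Psi$ obtained after one $J$--twist.
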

\begin{proof}
	Assume first that there exists an inner function $\Psi\in H^{\infty}(L(\h))$ such that \eqref{x4} holds. Then
	$$\Theta^{*}\Lambda \Psi^{*}=(\Psi\Lambda^{*}\Theta)^{*}=(\Psi\Lambda^{*}\Theta)_J=\Psi_J\Lambda_J^{*}\Theta_J,$$
	and so
	\begin{equation}\label{x6}
		\Lambda \Psi^{*}\Theta_J^{*}=\Theta\Psi_J\Lambda_J^{*}.
	\end{equation}
	Put $\Gamma =\Theta\Psi_J$. Then $\Gamma\in H^{\infty}(L(\h))$ is an inner function, $\Theta\leqslant \Gamma$ and by \eqref{x6},
	$$\Lambda \Gamma_J^{*}=\Lambda(\Theta_J\Psi)^{*}=\Lambda\Psi^{*}\Theta_J^{*}=\Theta\Psi_J\Lambda_J^{*}=\Gamma\Lambda_J^{*}.$$
	Thus \eqref{x5} holds and, by Proposition \ref{stw71} and Remark \ref{uw73}, $\mathbf{C}=\mathbf{C}_J^{\Lambda,\Gamma}$ is a conjugation in $L^2(\h)$. Moreover, it is an $\mathbf{M}_z$--commuting conjugation such that $$\mathbf{C}(\Lambda H^2(\h))=\Gamma H^2(\h) \subset \Theta H^2(\h).$$
	
	Assume now that $\mathbf{C}$ is an $\mathbf{M}_z$--commuting conjugation in $L^2(\h)$ such that $\mathbf{C}(\Lambda H^2(\h))\subset \Theta H^2(\h)$. By Theorem \ref{8.1} and Proposition  \ref{p2}~(2) there exists a unitary valued $\mathbf{U}\in L^\infty(L(\h))$ such that $$\mathbf{C}={M}_{\mathbf{U}}\mathbf{J}^\star=\mathbf{J}^\star {M}_{\mathbf{U}^*}$$
	and  {$J\mathbf{U}(z)J=\mathbf{U}^{\#}(z)$ a.e. on $\mathbb{T}$}. Therefore,
	$$\mathbf{C}(\Lambda H^2(\h))=\mathbf{J}^\star {M}_{\mathbf{U}^*}M_{\Lambda}( H^2(\h))\subset M_{\Theta} (H^2(\h)),$$
	and by Lemma \ref{L73},
	$$ {M}_{\mathbf{U}^*\Lambda}( H^2(\h))\subset \mathbf{J}^\star M_{\Theta}( H^2(\h))= M_{\Theta_J}\mathbf{J}^\star (H^2(\h))=\Theta_J H^2(\h).$$
	By Lemma \ref{lemat62}, $\mathbf{U}^*\Lambda\in H^{\infty}(L(\h))$ and there exists an inner function $\Psi\in H^{\infty}(L(\h))$ such that $\mathbf{U}^*\Lambda=\Theta_J \Psi$. Note that by  {the fact that $J\mathbf{U}(z)J=\mathbf{U}^{\#}(z)$ a.e. on $\mathbb{T}$} and Lemma \ref{L72}~(3), {we get} $\mathbf{U}_J^{*}=\mathbf{U}$. It follows that
$$\Theta\Psi_J\Lambda_J^{*}=(\Theta_J\Psi\Lambda^{*})_J=(\mathbf{U}^{*}\Lambda\Lambda^{*})_J=\mathbf{U}=\Lambda\Psi^{*}\Theta_J^{*},$$	
which is an equivalent form of \eqref{x4}. Moreover, the above means that the function $\Gamma=\Theta \Psi_J=\mathbf{U}_J^*\Lambda_J$ satisfies \eqref{x5} (since $\Gamma_J^{*}=\Psi^{*}\Theta_J^{*}$). Clearly, $\Gamma\in H^{\infty}(L(\h))$ is an inner function and $\Theta\leqslant \Gamma$. Moreover,
$$\mathbf{C}={M}_{\mathbf{U}}\mathbf{J}^\star={M}_{\mathbf{U}_J^{*}}\mathbf{J}^\star{M}_{\Lambda}{M}_{\Lambda^{*}}={M}_{\mathbf{U}_J^{*}\Lambda_J}\mathbf{J}^\star{M}_{\Lambda^{*}}={M}_{\Gamma}\mathbf{J}^\star{M}_{\Lambda^{*}}=\mathbf{C}_J^{\Lambda,\Gamma}.$$
\end{proof}

\begin{remark}
Let $\Theta\in H^{\infty}(L(\h))$ be an inner function. Assume that $\mathbf{C}$ is an $\mathbf{M}_z$--commuting conjugation in $L^2(\h)$ such that $\mathbf{C}(\Theta H^2(\h)) \subset \Theta H^2(\h)$.
By Theorem \ref{75} we obtain that $\mathbf{C}=\mathbf{C}_J^{\Theta,\Gamma}$ for some inner function $\Gamma\in  H^{\infty}(L(\h))$ such that $\Theta\leqslant \Gamma$ and
	\begin{equation}\label{5}
	\Gamma\Theta_J^{*}=\Theta\Gamma_J^{*}.
	\end{equation}

	Therefore there exists $\Psi\in H^{\infty}(L(\h))$ such that $\Gamma=\Theta\Psi$ and by \eqref{5},
	$$\Theta\Psi\Theta_J^{*}=\Theta\Psi_J^{*}\Theta_J^{*}.$$
	It follows that $\Psi=\Psi_J^{*}$. Since $\Psi_J, \Psi^*_J\in H^\infty(L(\h))$,  so $\Psi$ must be a unitary constant. Assume that $\Psi(z)=U_0\in L(\h)$ a.e. on $\mathbb{T}$, then  $\Gamma(z)=\Theta(z)U_0$  {a.e. on $\mathbb{T}$} and
	$$\mathbf{C}=M_{\Gamma}\mathbf{J}^{\star}M_{\Theta^{*}}=M_{\Theta}M_{U_0}\mathbf{J}^{\star}M_{\Theta^{*}}=M_{\Theta U_0\Theta ^{*}}M_{\Theta}\mathbf{J}^{\star}M_{\Theta^{*}}.$$
Note that by \eqref{5} we now have
\begin{equation*}
\Theta(z)U_0 J\Theta(\bar z)^* J=\Theta(z)JU_0^*\Theta(\bar z)^*J\quad  {\text{a.e. on }\mathbb{T}},
\end{equation*}
which implies that
$U_0 J=JU_0^*$, i.e., $U_0$ is $J$--symmetric. Recalling the scalar case considered in \cite[Corollary 5.4]{CKLP} one can expect that $\Theta U_0\Theta ^{*}$ is a unitary constant. This is not necessarily true (see Example \ref{ex88}).
\end{remark}
\begin{example}\label{ex88}
{Let $\h=\mathbb{C}^2$ and consider the conjugation $J(z_1,z_2)=(\bar z_1,\bar z_2)$ in $\mathbb{C}^2$.   If we take $\Theta=\begin{bmatrix}
                                           1 & 0 \\
                                           0 & z
                                         \end{bmatrix}$ and
 $U_0=\begin{bmatrix} 0&1\\1&0
\end{bmatrix}$.  It is easy to see that  both $\Theta$ and $U_0$ are $J$--symmetric, but  $\Theta U_0\Theta^*=\begin{bmatrix} 0&\bar z\\z&0
\end{bmatrix}$, so it is not constant.}
\end{example}

\end{document}